\pgfplotsset{compat=1.15}
\DeclareMathOperator{\Aut}{Aut}
\DeclareMathOperator{\End}{End}
\DeclareMathOperator{\Span}{Span}
\DeclareMathOperator{\Der}{Der}
\DeclareMathOperator{\IDer}{IDer}
\DeclareMathOperator{\hoch}{HH}
\theoremstyle{plain}
\newtheorem{theorem}{Theorem}[section]
\newtheorem{corollary}[theorem]{Corollary}
\newtheorem{proposition}[theorem]{Proposition}
\newtheorem{lemma}[theorem]{Lemma}
\theoremstyle{definition}
\newtheorem{definition}[theorem]{Definition}
\newtheorem{remark}[theorem]{Remark}
\newtheorem{example}[theorem]{Example}
\crefname{theorem}{Theorem}{Theorems}
\crefname{lemma}{Lemma}{Lemmas}
\crefname{corollary}{Corollary}{Corollaries}
\crefname{proposition}{Proposition}{Propositions}
\crefname{definition}{Definition}{Definitions}
\crefname{example}{Example}{Examples}
\crefname{remark}{Remark}{Remarks}
\crefname{conjecture}{Conjecture}{Conjectures}
\crefname{section}{Section}{Sections}
\crefname{equation}{\unskip}{\unskip}
\crefname{enumi}{\unskip}{\unskip}
\crefname{subsection}{Subsection}{Subsections}
\newcommand{\0}{\theta}
\newcommand{\ve}{\varepsilon}
\newcommand{\af}{\alpha}
\newcommand{\bt}{\beta}
\newcommand{\lb}{\lambda}
\newcommand{\Lb}{\Lambda}
\newcommand{\gm}{\gamma}
\newcommand{\vf}{\varphi}
\newcommand{\dl}{\delta}
\newcommand{\Dl}{\Delta}
\newcommand{\sg}{\sigma}
\newcommand{\NN}{\mathds{N}}
\newcommand{\ZZ}{\mathds{Z}}
\newcommand{\m}{{}^{-1}}
\newcommand{\sst}{\subseteq}
\newcommand{\gen}[1]{\langle #1\rangle}
\newcommand{\ol}{\overline}
\newcommand{\ot}{\otimes}
\newcommand{\ch}{\mathrm{char}}
\newcommand{\dett}[1][_q(n)]{{\operatorname{det}#1}}
\newcommand{\id}{\mathrm{id}}
\renewcommand{\iff}{\Leftrightarrow}
\begin{document}
	\title[Quantum upper triangular matrix algebras]{Quantum upper triangular matrix algebras}	
	\author{\'Erica Z. Fornaroli}
	\address{Departamento de Matem\'atica, Universidade Estadual de Maring\'a, Maring\'a, PR, CEP: 87020--900, Brazil}
	\email{ezancanella@uem.br}
	
	\author{Mykola Khrypchenko}
	\address{Departamento de Matem\'atica, Universidade Federal de Santa Catarina,  Campus Trindade, Florian\'opolis, SC, CEP: 88040--900, Brazil}
	\email{nskhripchenko@gmail.com}
	
	\author{Samuel A.\ Lopes}
	\address{CMUP, Departamento de Matem\'atica, Faculdade de Ci\^encias, Universidade do Porto, Rua do Campo Alegre s/n, 4169--007 Porto, Portugal}
	\email{slopes@fc.up.pt}
	
	\author{Ednei A. Santulo Jr.}
	\address{Departamento de Matem\'atica, Universidade Estadual de Maring\'a, Maring\'a, PR, CEP: 87020--900, Brazil}
	\email{easjunior@uem.br}
	
	\subjclass[2020]{16T20, 16S36, 16W20, 16W25, 16E40}
	\keywords{quantum upper triangular matrix algebra, bialgebra, Hopf algebra, automorphism, derivation}
	
	\begin{abstract}
		Following the ideas in~\cite{yM88}, \cite{T90} and inspiration from~\cite{KO24}, we construct a bialgebra $T_q(n)$ and a pointed Hopf algebra $UT_q(n)$ which quantize the coordinate rings of the algebra of upper triangular matrices and of the group of invertible upper triangular matrices of size $n\geq 2$, respectively, where $q$ is a nonzero parameter. The resulting structure on $UT_q(n)$ is neither commutative nor cocommutative and it can be seen as a Hopf quotient of the Takeuchi's two-parameter quantization~\cite{T90} of ${\rm GL}(n)$ corresponding to a specific choice of parameters. The motivation comes from the idea of quantizing the incidence algebra of a finite poset, as the latter can be embedded as a subalgebra of the algebra of upper triangular matrices. We further study and compare the Lie algebras of derivations, the automorphism groups and the low degree Hochschild cohomology of these algebras in case $n=2$.
	\end{abstract}
	
	\maketitle
	
	\tableofcontents
	
	\section*{Introduction}
	
	Incidence algebras and coalgebras, associated to a locally finite poset, are a fundamental object in combinatorics and number theory, as well as in topology, discrete geometry and representation theory (see e.g.~\cite[Chapter 3]{rS12}, \cite{MSS21}). For example, in \cite{C89} and \cite{GS83} the authors show that the Hochschild cohomology of a simplicial complex associated to a finite poset is isomorphic, as a Gerstenhaber algebra, to the Hochschild cohomology of the corresponding incidence algebra (see also \cite{IM22}). More recently, incidence algebras have become a fundamental tool in topological data analysis (see \cite{BL00} for details). Both the algebra and the coalgebra structures of the incidence algebra have been instrumental in combinatorics and number theory, where the poset algebra plays a role analogous to that of the group algebra of a finite group. Since any finite poset has an extension into a totally ordered one, incidence algebras of finite posets are subalgebras of the algebra of upper triangular matrices. 
	
	As a first step towards the introduction of a \textit{quantum incidence algebra}, in this paper we construct a noncommutative and noncocommutative bialgebra $T_q(n)$, axiomatized so that it preserves a uniparameter quantum affine space in such a way that \textit{reflection on the antidiagonal} induces an automorphism. By formally inverting the \textit{quantum determinant}, which in our case is not central, we obtain a Hopf algebra $UT_q(n)$ that coacts on the uniparameter quantum affine space $A_n(q)$, where $q$ is a unit from the base field. It turns out that the quantum group $UT_q(n)$ can be seen as a Hopf quotient of the two-parameter quantization of ${\rm GL}(n)$ introduced by Takeuchi, choosing in~\cite{T90} the parameters $(\af,\bt)=(q^{-1},q)$. It is interesting to note that the choice $(\af,\bt)=(q,q)$ in~\cite{T90} gives the quantum version of ${\rm GL}(n)$ introduced in~\cite{yM88} and~\cite{PW91}, whereas the choice $(\af,\bt)=(1,q)$ gives the version introduced in~\cite{DD91}. To our knowledge, the choice $(\af,\bt)=(q^{-1},q)$ hasn't been studied, nor has its pointed Hopf quotient $UT_q(n)$. This choice of parameters significantly simplifies the relations and yet produces a Hopf algebra which has trivial center in the generic case. We hope this will make it suitable for applications in algebraic combinatorics.
	
	After defining the bialgebra $T_q(n)$ and the Hopf algebra $UT_q(n)$, we explore a few of their properties, including the center and $\ast$-structures. Having \textit{quantum symmetries} in mind, we study the low-dimensional Hochschild cohomology and the automorphism groups of $T_q(n)$ and of $UT_q(n)$ (both as algebras and bialgebras) in case $n=2$.

	\section{Preliminaries}\label{sec-prelim}
	
	Throughout the paper, $K$ will denote a field of characteristic different from $2$ and $K^*$ its multiplicative group of units. All algebras, homomorphisms and tensor products will be considered over $K$, unless otherwise specified. Let $A$ be an algebra. We will use the notation $Z(A)$ for the center of $A$, $\Aut(A)$ for its automorphism group, $\Der(A)$ for its Lie algebra of derivations, $\IDer(A)$ for the ideal of inner derivations and $\hoch^k(A)$ for the $k$-th Hochschild cohomology group of $A$. So, in particular, $\hoch^0(A)=Z(A)$ and $\hoch^1(A)=\Der(A)/\IDer(A)$.
	
	Given a set $X=\{x_i: i\in I\}$, the free unital associative algebra on $X$ will be denoted by $K\gen{X}$ or $K\gen{x_i: i\in I}$. The identity map on the set $X$ is written as $\id_X$ or $\id$, if the set $X$ is clear from the context. We denote by $\ZZ$ the set of integers and by $\NN$ the set of nonnegative integers. 
	
	Recall (cf.~\cite{Alev-Chamarie}) that a (multiparameter) \textit{quantum affine space} of dimension $n\geq 1$ is the quotient of $K\gen{x_1,\dots,x_n}$ by the relations 
	\begin{align}\label{E:AnQ}
		x_ix_j=q_{ij}x_jx_i,\ 1\le i,j\le n,
	\end{align}
	where $Q:=(q_{ij})_{i,j=1}^n\in (K^*)^{n^2}$ is a multiplicatively antisymmetric matrix; in other words, $q_{ji}=q\m_{ij}$ and $q_{ii}=1$, for all $1\le i, j\le n$. We denote this algebra by $A_n(Q)$. 
	
	When $q_{ij}=q$ for all $i>j$, we write $A_n(q)$ instead of $A_n(Q)$.
	Thinking of $(x_i)_{i=1}^n$ in $\left(A_n(q)\right)^n$ as a column vector, with the usual matricial order for the entries, we could represent the relations $x_ix_j=qx_jx_i$, for $i>j$, pictorially as
	\begin{tikzcd}
		j\\ i\arrow["q" ']{u}
	\end{tikzcd}, which has the same meaning as 
	\begin{tikzcd}
		j\arrow["q^{-1}" ]{d}\\ i
	\end{tikzcd}.
	
	\begin{remark}
		Observe that $A_n(Q)$ is isomorphic to the iterated Ore extension $K[x_1][x_2;\sg_2 ]\dots[x_n;\sg_n]$, where $\sg_i$ is the automorphism of $K[x_1]\dots[x_{i-1};\sg_{i-1}]$ given by $\sg_i(x_j)=q_{ij}x_j$ for all $1\le j<i\le n$. In particular, $A_n(Q)$ is a noetherian domain by~\cite[Corollary 2.7]{Goodearl-Warfield} with $K$-basis formed by the (equivalence classes of) monomials $x^\nu:=x_1^{\nu_1}\cdots x_n^{\nu_n}$, where $\nu\in\NN^n$. Moreover, $A_n(Q)$ is a $\ZZ^n$-graded algebra whose homogeneous components are $A_n(Q)_\nu=Kx^\nu$, whenever $\nu\in\NN^n$, and $A_n(Q)_\nu=\{0\}$, otherwise.
	\end{remark}
	
	\section{Quantum $T_q(n)$}
	Let $n\ge 2$ be an integer. We are going to propose a quantization of the algebra of upper triangular $n\times n$ matrices following the ideas in~\cite{yM88} (see also~\cite[Chapter IV]{Kassel}). It is worth noting that this is neither (at least in any obvious way) a subalgebra nor a quotient of the algebra of quantum matrices (cf.\ \cite{RTF89, AST91, PW91}), often denoted by $M_q(n)$ or $\mathcal O_q(M_n(K))$.
	
	Let $a_{ij}$, $1\le i\le j\le n$, be variables that one can organize in the following upper triangular $n\times n$ matrix
	\begin{align*}
		A=
		\begin{pmatrix}
			a_{11} & a_{12} & \dots & a_{1n}\\
			0 & a_{22} & \dots & a_{2n}\\
			\vdots & \ddots & \ddots & \vdots\\
			0 & \dots & 0 & a_{nn}
		\end{pmatrix}.
	\end{align*}
	Fix the map $\rho$ sending $a_{ij}$ to $a_{n+1-j,n+1-i}$. In the language of matrices this is the reflection across the antidiagonal\footnote{It comes from the corresponding involution of the algebra of upper triangular $n\times n$ matrices.},
	\begin{align*}
		\rho(A)
		= 
		\begin{pmatrix}
			a_{nn} & a_{n-1,n} & \dots & a_{1n}\\
			0 & a_{n-1,n-1} & \dots & a_{1,n-1}\\
			\vdots & \ddots & \ddots & \vdots\\
			0 & \dots & 0 & a_{11}
		\end{pmatrix}.
	\end{align*}
	Fix $q\in K^*$ and consider the algebra $K\gen{a_{ij}:1\le i\le j\le n}\otimes_K A_n(q)$.
	Following~\cite{yM88, AST91, Kassel}, we define 
	\begin{align}\label{X'-and-X''}
		x'_i=\sum_{j=i}^na_{ij}\otimes x_j
		\text{ and }
		x''_i=\sum_{j=i}^n a_{n+1-j,n+1-i}\otimes x_j.
	\end{align}
	
	\begin{lemma}\label{relations-in-T_q(n)}
		One has
		$x'_jx'_i=qx'_ix'_j$ and $x''_jx''_i=qx''_ix''_j$, $1\le i<j\le n$,
		if and only if
		\begin{align}
			a_{jk}a_{ik}&=qa_{ik}a_{jk}, \ i<j\le k,\label{a_jka_ik-is-qa_ika_jk}\\
			a_{jk}a_{jl}&=qa_{jl}a_{jk}, \ j\le k<l,\label{a_jka_jl-is-qa_jla_jk}\\
			a_{ik}a_{jl}&=a_{jl}a_{ik}, \ i<j\le l,\ i\le k<l,\label{a_ika_jl-is-a_jla_ik}\\
			a_{jk}a_{il}&=q^2a_{il}a_{jk}, \ i<j\le k<l.\label{a_jka_il-is-a^2a_ila_jk}
		\end{align}  
	\end{lemma}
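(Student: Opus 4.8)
The plan is to expand the products $x'_jx'_i$ and $x'_ix'_j$ directly using the definition \eqref{X'-and-X''}, collect the coefficients of each basis monomial $x_kx_l$ of $A_n(q)$, and compare. Since the $a$-variables and the $x$-variables live in different tensor factors, we have $(a_{jm}\ot x_m)(a_{ik}\ot x_k)=a_{jm}a_{ik}\ot x_mx_k$, so
\begin{align*}
x'_jx'_i=\sum_{m=j}^n\sum_{k=i}^n a_{jm}a_{ik}\ot x_mx_k,\qquad
x'_ix'_j=\sum_{k=i}^n\sum_{m=j}^n a_{ik}a_{jm}\ot x_kx_m.
\end{align*}
Now rewrite everything in terms of the PBW basis of $A_n(q)$, i.e.\ monomials $x_sx_t$ with $s\le t$, using $x_mx_k=q\m x_kx_m$ when $m>k$ (and similarly on the other side). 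For a fixed pair $s\le t$ with $s\ge i$ and $t\ge j$ (the constraints coming from the summation ranges), equating coefficients of $x_sx_t$ in $x'_jx'_i$ and in $qx'_ix'_j$ yields one scalar relation among the $a$'s. The bookkeeping naturally splits into the cases $s<t$ and $s=t$, and within $s<t$ one must separately track which of the two index pairs $\{(j,\cdot),(i,\cdot)\}$ supplies the first versus the second factor; this is exactly what produces the four displayed families.

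Concretely, I would organize the case analysis by the relative position of the four indices. Setting $i<j$, the coefficient of $x_kx_l$ with $k<l$ receives contributions from $a_{jk}a_{il}$ (taking $m=k$, the $x$-index from $x'_j$, and the other from $x'_i$) and from $a_{jl}a_{ik}\ot x_lx_k = q\m a_{jl}a_{ik}\ot x_kx_l$ on the left side, and analogously $a_{ik}a_{jl}$ and $q\m a_{il}a_{jk}$ on the right side, each subject to the range conditions $i\le k$, $j\le l$ etc.; the coefficient of $x_k^2$ on each side comes from a single term $a_{jk}a_{ik}$ resp.\ $a_{ik}a_{jk}$. Reading off the resulting equations and sorting by whether $k<j$, $k=j$, $k>j$, $l\le$ or $>$ the relevant index, one obtains precisely the conditions \eqref{a_jka_ik-is-qa_ika_jk}, \eqref{a_jka_jl-is-qa_jla_jk}, \eqref{a_ika_jl-is-a_jla_ik} and \eqref{a_jka_il-is-a^2a_ila_jk} (the last with a $q^2$ because one factor of $q$ comes from the defining relation $x'_jx'_i=qx'_ix'_j$ and another from re-ordering $x_lx_k$ on the left-hand side). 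Conversely, one checks that imposing exactly these relations makes all coefficients match, giving the ``if'' direction; since the basis monomials are linearly independent, no relation is lost or spurious.

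For the $x''$ statement, the cleanest route is to observe that $x''_i$ is obtained from $x'_i$ by the substitution $a_{ij}\mapsto a_{n+1-j,\,n+1-i}=\rho(a_{ij})$, i.e.\ $x''_i$ is the image of $x'_{?}$ under the index reversal; more precisely, $\rho$ extended to $K\gen{a_{ij}}$ is an anti-automorphism (it comes from an involution of upper triangular matrices, as noted in the footnote), and reindexing $i\mapsto n+1-i$ turns the system for the $x''$'s into the system for the $x'$'s. One must be slightly careful: $\rho$ reverses products, which is why $x''_jx''_i=qx''_ix''_j$ (not the reversed product) corresponds correctly to the $x'$ equations after the substitution — the anti-multiplicativity and the swap $i\leftrightarrow j$ cancel out. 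Alternatively, if one prefers to avoid invoking $\rho$, the $x''$ computation is literally the same expansion as above with $a_{jk}$ replaced throughout by $a_{n+1-k,n+1-j}$, and one verifies that the four relation families are stable under this relabelling (each family maps into itself, possibly with the roles of the generating indices permuted). I expect the main obstacle to be purely organizational: keeping the inequality constraints on $i,j,k,l$ straight across the several subcases so that one is sure the four families \eqref{a_jka_ik-is-qa_ika_jk}--\eqref{a_jka_il-is-a^2a_ila_jk} are collectively equivalent to the pair of quantum-plane relations, with nothing under- or over-counted.
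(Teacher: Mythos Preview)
There is a genuine gap. You assert that the single family of relations $x'_jx'_i=qx'_ix'_j$ already yields all four of \eqref{a_jka_ik-is-qa_ika_jk}--\eqref{a_jka_il-is-a^2a_ila_jk}, and that the $x''$ relations are then merely a consistency check (``stable under this relabelling''). That is not what actually happens. Expanding $x'_jx'_i-qx'_ix'_j$ and comparing coefficients you only obtain:
\begin{itemize}
\item from the diagonal terms $x_k^2$: relation \eqref{a_jka_ik-is-qa_ika_jk};
\item from $x_kx_l$ with $i\le k<j\le l$: the commutation $a_{ik}a_{jl}=a_{jl}a_{ik}$, which is only the $k<j$ portion of \eqref{a_ika_jl-is-a_jla_ik};
\item from $x_kx_l$ with $i<j\le k<l$: the single mixed relation
\[
a_{ik}a_{jl}-a_{jl}a_{ik}=q^{-1}a_{jk}a_{il}-qa_{il}a_{jk}.
\]
\end{itemize}
Nowhere do you get \eqref{a_jka_jl-is-qa_jla_jk}: the product $x'_jx'_i$ contains only terms $a_{j\cdot}a_{i\cdot}$, never $a_{j\cdot}a_{j\cdot}$. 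Nor do you get \eqref{a_jka_il-is-a^2a_ila_jk} on its own. The paper's proof uses the $x''$ relations in an essential way: applying the reindexing $a_{ij}\mapsto a_{n+1-j,n+1-i}$ (which is an \emph{automorphism} of the free algebra, not an anti-automorphism as you state) to the three conditions above produces \eqref{a_jka_jl-is-qa_jla_jk}, the same $k<j$ commutation again, and the \emph{opposite-sign} mixed relation $a_{jl}a_{ik}-a_{ik}a_{jl}=q^{-1}a_{jk}a_{il}-qa_{il}a_{jk}$. Adding and subtracting the two mixed relations, and using $\ch(K)\ne 2$, is precisely what splits them into the remaining $j\le k$ part of \eqref{a_ika_jl-is-a_jla_ik} and into \eqref{a_jka_il-is-a^2a_ila_jk}. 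Your sketch omits this combination step and the characteristic hypothesis entirely, so as written it does not establish the equivalence.
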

	\begin{proof}

		Let $i<j$. We have $x'_i=\sum_{k=i}^n a_{ik}\ot x_k$ and $x'_j=\sum_{l=j}^n a_{jl}\ot x_l$, so
		\begin{align*}
			x'_ix'_j&=\sum_{k=i}^n\sum_{l=j}^na_{ik}a_{jl}\ot x_kx_l\\
			&=\sum_{k=j}^na_{ik}a_{jk}\ot x_k^2+\sum_{i\le k<l,\ j\le l}a_{ik}a_{jl}\ot x_kx_l+\sum_{j\le l<k}qa_{ik}a_{jl}\ot x_lx_k\\
			&=\sum_{k=j}^na_{ik}a_{jk}\ot x_k^2+\sum_{i\le k<l,\ j\le l}a_{ik}a_{jl}\ot x_kx_l+\sum_{j\le k<l}qa_{il}a_{jk}\ot x_kx_l\\
			&=\sum_{k=j}^na_{ik}a_{jk}\ot x_k^2+\sum_{i\le k<j\le l}a_{ik}a_{jl}\ot x_kx_l+\sum_{j\le k<l}(a_{ik}a_{jl}+qa_{il}a_{jk})\ot x_kx_l.
		\end{align*}
		Similarly,
		\begin{align*}
			x'_jx'_i
			&=\sum_{k=j}^na_{jk}a_{ik}\ot x_k^2+\sum_{i\le k<j\le l}qa_{jl}a_{ik}\ot x_kx_l+\sum_{j\le k<l}(qa_{jl}a_{ik}+a_{jk}a_{il})\ot x_kx_l.
		\end{align*}
		Thus, $x'_jx'_i=qx'_ix'_j$ if and only if
		\begin{align}
			a_{jk}a_{ik}&=qa_{ik}a_{jk},\ i<j\le k,\label{a_jka_ik=qa_ika_jk}\\
			a_{jl}a_{ik}&=a_{ik}a_{jl},\ i\le k<j\le l,\label{a_jla_ik=a_ika_jl}\\
			a_{ik}a_{jl}-a_{jl}a_{ik}&=q\m a_{jk}a_{il}-qa_{il}a_{jk},\ i<j\le k<l.\label{a_ika_jl-a_jla_ik=q-inv.a_jka_il-qa_ila_jk}
		\end{align}
		The equivalent conditions for $x''_jx''_i=qx''_ix''_j$ are obtained from \cref{a_jka_ik=qa_ika_jk,a_jla_ik=a_ika_jl,a_ika_jl-a_jla_ik=q-inv.a_jka_il-qa_ila_jk} by applying $\rho$ to each variable:
		\begin{align*}
			a_{k'j'}a_{k'i'}&=qa_{k'i'}a_{k'j'},\ k'\le j'<i',\\
			a_{l'j'}a_{k'i'}&=a_{k'i'}a_{l'j'},\ l'\le j'<k'\le i',\\
			a_{k'i'}a_{l'j'}-a_{l'j'}a_{k'i'}&=q\m a_{k'j'}a_{l'i'}-qa_{l'i'}a_{k'j'},\ l'<k'\le j'<i',
		\end{align*}
		where $i'=n+1-i$, $j'=n+1-j$, $k'=n+1-k$ and $l'=n+1-l$. 
		Renaming the indices $(l',k',j',i')$ by $(i,j,k,l)$, we obtain
		\begin{align}
			a_{jk}a_{jl}&=qa_{jl}a_{jk},\ j\le k<l,\label{a_jka_jl=qa_jla_jk}\\
			a_{ik}a_{jl}&=a_{jl}a_{ik},\ i\le k<j\le l,\label{a_ika_jl=a_jla_ik}\\
			a_{jl}a_{ik}-a_{ik}a_{jl}&=q\m a_{jk}a_{il}-qa_{il}a_{jk},\ i<j\le k<l.\label{a_jla_ik-a_ika_jl=q-inv.a_jka_il-qa_ila_jk}
		\end{align}
		In particular, we see that \cref{a_jla_ik=a_ika_jl} is the same as \cref{a_ika_jl=a_jla_ik}, and, due to $\ch(K)\ne 2$, \cref{a_ika_jl-a_jla_ik=q-inv.a_jka_il-qa_ila_jk,a_jla_ik-a_ika_jl=q-inv.a_jka_il-qa_ila_jk} together are equivalent to
		\begin{align*}
			a_{ik}a_{jl}=a_{jl}a_{ik},\ a_{jk}a_{il}=q^2a_{il}a_{jk},\ i<j\le k<l.
		\end{align*}
	\end{proof}
	
	\begin{definition}
		For $n\geq 2$, define $T_q(n)$ to be the quotient of $K\gen{a_{ij}:1\le i\le j\le n}$ by the ideal generated by relations \cref{a_jka_ik-is-qa_ika_jk,a_jka_jl-is-qa_jla_jk,a_ika_jl-is-a_jla_ik,a_jka_il-is-a^2a_ila_jk}.
	\end{definition}
	
	As above for the quantum affine space, we can represent relations \cref{a_jka_ik-is-qa_ika_jk,a_jka_jl-is-qa_jla_jk,a_ika_jl-is-a_jla_ik,a_jka_il-is-a^2a_ila_jk} pictorially as follows:
	\begin{equation}\label{E:rels-digram}
		\begin{tikzcd}[baseline=0pt]
			(i,k) \arrow{r} & (i, l)\\ (j,k) \arrow[Rightarrow]{ur} \arrow{u} \arrow{r} & (j, l)  \arrow{u}
		\end{tikzcd}
	\end{equation}
	The convention used in \cref{E:rels-digram} is that the single horizontal and vertical arrows have label $q$, whereas the diagonal double arrow has label $q^2$; the absence of an edge means that the corresponding generators commute. Keeping the analogy with matrices, it is assumed that $i<j$ and $k<l$.

	\begin{proposition}\label{P:auto-rho}
		The map $\rho$ sending $a_{ij}$ to $a_{n+1-j,n+1-i}$ defines an automorphism of order $2$ of $T_q(n)$.
	\end{proposition}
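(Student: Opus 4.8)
The plan is to lift $\rho$ to the free algebra and then show that it respects the defining ideal. Since $i\le j$ forces $n+1-j\le n+1-i$, the assignment $a_{ij}\mapsto a_{n+1-j,n+1-i}$ makes sense on generators and, by the universal property of the free algebra, extends uniquely to an algebra endomorphism $\wtl\rho$ of $F:=K\gen{a_{ij}:1\le i\le j\le n}$. The map on index pairs $(i,j)\mapsto(n+1-j,n+1-i)$ is an involution, so $\wtl\rho^2=\id_F$; in particular $\wtl\rho\in\Aut(F)$.

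Next I would verify that $\wtl\rho$ preserves the ideal $I$ of defining relations of $T_q(n)$. Because $\wtl\rho$ is an involution it is enough to show $\wtl\rho(I)\sst I$, and since $I$ is generated by the relations \cref{a_jka_ik-is-qa_ika_jk,a_jka_jl-is-qa_jla_jk,a_ika_jl-is-a_jla_ik,a_jka_il-is-a^2a_ila_jk} it suffices to apply $\wtl\rho$ to each of them. A direct check — formally the same index bookkeeping already carried out in the proof of \cref{relations-in-T_q(n)}, where the $x''$-relations were obtained from the $x'$-relations by applying $\rho$ to the variables — shows that $\wtl\rho$ sends each defining relation to a scalar multiple of a defining relation: it interchanges the families \cref{a_jka_ik-is-qa_ika_jk} and \cref{a_jka_jl-is-qa_jla_jk}, and it fixes each of \cref{a_ika_jl-is-a_jla_ik} and \cref{a_jka_il-is-a^2a_ila_jk} setwise (the scalars $1,q,q^2$ are preserved, and the index inequalities transform correctly under the order-reversal $m\mapsto n+1-m$, e.g. $i<j\le k$ becomes $n+1-k\le n+1-j<n+1-i$, the range needed in \cref{a_jka_jl-is-qa_jla_jk}). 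Hence $\wtl\rho(I)=I$, so $\wtl\rho$ descends to an algebra endomorphism $\rho$ of $T_q(n)=F/I$ with $\rho^2=\id_{T_q(n)}$; being idempotent of order at most $2$ and bijective, $\rho\in\Aut(T_q(n))$.

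Finally, to see that $\rho$ has order exactly $2$, I would use the $\NN^{S}$-grading of $F$ (where $S=\{(i,j):1\le i\le j\le n\}$) in which $a_{ij}$ is homogeneous of multidegree $e_{ij}$. Each generator of $I$ is homogeneous of multidegree $e_{(i,k)}+e_{(j,l)}$ with $(i,k)\neq(j,l)$, so $I$ is a homogeneous ideal all of whose components in multidegrees of support size $\le 1$ vanish; in particular $I\cap Ka_{ij}=\{0\}$ for every $(i,j)$, so the images of the $a_{ij}$ in $T_q(n)$ are nonzero and pairwise distinct. As $n\ge 2$ we have $(1,1)\neq(n,n)$, hence $\rho(a_{11})=a_{nn}\neq a_{11}$ in $T_q(n)$, so $\rho\neq\id$ and therefore has order $2$. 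I do not expect a genuine obstacle here; the only mildly delicate point is the index bookkeeping in the second step, and that is already essentially done in \cref{relations-in-T_q(n)}.
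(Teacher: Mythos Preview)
Your proof is correct and follows essentially the same approach as the paper: lift to the free algebra, verify that $\rho$ preserves the defining relations (the paper phrases this as the symmetry of diagram~\cref{E:rels-digram} under reflection across the antidiagonal, but notes one could equally well check directly, which is what you do), and conclude from $\rho^2=\id$. Your final paragraph, showing $\rho\neq\id$ via a grading argument, goes beyond the paper, which simply asserts order~$2$; a total-degree argument (the ideal $I$ is generated in degree~$2$, so the degree-$1$ generators survive and remain distinct) would have sufficed here, but your multidegree version is certainly correct.
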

	\begin{proof}
		Although it could be checked directly that $\rho$ preserves the defining relations of $T_q(n)$, this follows immediately from the symmetry of the diagram \cref{E:rels-digram} under reflection across the antidiagonal. Clearly, $\rho^2=\id_{T_q(n)}$ so $\rho$ is an automorphism of order $2$.
	\end{proof}

	\begin{remark} 
		When $n$ is even, there exists an involution on the algebra of upper triangular matrices which is not equivalent to the reflection across the antidiagonal. This involution is given by the map $\0(a_{ij})=-a_{n+1-j,n+1-i}$, if $i\leq\frac n2<j$, and $\0(a_{ij})=a_{n+1-j,n+1-i}$, otherwise. Applying $\0$ to every variable in equalities \cref{a_jka_ik=qa_ika_jk,a_jla_ik=a_ika_jl,a_ika_jl-a_jla_ik=q-inv.a_jka_il-qa_ila_jk}, one obtains the same conditions given by equalities \cref{a_jka_jl=qa_jla_jk,a_ika_jl=a_jla_ik,a_jla_ik-a_ika_jl=q-inv.a_jka_il-qa_ila_jk}. 
	\end{remark}
	
	\begin{corollary}
		The algebra $T_q(n)$ is isomorphic to $A_{\binom{n+1}2}(Q)$ for some $Q=Q(q)$, where $\{a_{ij}:1\le i\le j\le n\}$ is ordered lexicographically. 
	\end{corollary}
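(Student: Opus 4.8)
The plan is to read off from the defining relations of $T_q(n)$ a multiplicatively antisymmetric matrix $Q=Q(q)$ of size $m:=\binom{n+1}2$ and to check that, after relabelling the generators $a_{ij}$ ($1\le i\le j\le n$) by $y_1,\dots,y_m$ in the lexicographic order, the presentation of $T_q(n)$ becomes word-for-word the presentation \cref{E:AnQ} of $A_m(Q)$. The first observation is that each of \cref{a_jka_ik-is-qa_ika_jk,a_jka_jl-is-qa_jla_jk,a_ika_jl-is-a_jla_ik,a_jka_il-is-a^2a_ila_jk} has the form $a_\bt a_\af=c\,a_\af a_\bt$ for a pair of \emph{distinct} generators $a_\af,a_\bt$ and a scalar $c\in\{1,q,q^2\}$; the diagram \cref{E:rels-digram} already encodes this.

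The combinatorial core is the claim that every unordered pair $\{a_\af,a_\bt\}$ of distinct generators occurs in exactly one of the four families \cref{a_jka_ik-is-qa_ika_jk,a_jka_jl-is-qa_jla_jk,a_ika_jl-is-a_jla_ik,a_jka_il-is-a^2a_ila_jk}, and exactly once within that family. I would prove this by a short case analysis on $\af=(i,k)$, $\bt=(j,l)$ with $i\le k$ and $j\le l$: if $i=j$ the pair is governed by \cref{a_jka_jl-is-qa_jla_jk}; if $k=l$ by \cref{a_jka_ik-is-qa_ika_jk}; and if $i\ne j$ and $k\ne l$, say $i<j$ (swapping the two generators if necessary), then by \cref{a_ika_jl-is-a_jla_ik} when $k<l$ and by \cref{a_jka_il-is-a^2a_ila_jk} when $k>l$ (in the latter situation $i<j\le l<k$, and one applies \cref{a_jka_il-is-a^2a_ila_jk} with the roles of its last two indices interchanged). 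These three cases are clearly exhaustive and pairwise disjoint, and within each family an unordered pair appears only once, since specifying which of the two index pairs has the smaller first (or second) coordinate fixes the assignment. Consequently, for any two distinct generators there is a unique $q_{\af\bt}\in K^*$ — one of $1,q^{\pm1},q^{\pm2}$ — with $a_\af a_\bt=q_{\af\bt}a_\bt a_\af$, and, putting $q_{\af\af}=1$, the family $(q_{\af\bt})$ is multiplicatively antisymmetric.

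Given this, write $\{(i,j):1\le i\le j\le n\}=\{\bt_1<\dots<\bt_m\}$ in the lexicographic order, set $q_{st}:=q_{\bt_s\bt_t}$ and $Q:=(q_{st})_{s,t=1}^m$. Since, up to a nonzero scalar, $a_{\bt_s}a_{\bt_t}-q_{st}a_{\bt_t}a_{\bt_s}$ is the unique relation from \cref{a_jka_ik-is-qa_ika_jk,a_jka_jl-is-qa_jla_jk,a_ika_jl-is-a_jla_ik,a_jka_il-is-a^2a_ila_jk} attached to $\{\bt_s,\bt_t\}$ (and it is $0$ for $s=t$), the two-sided ideal of $K\gen{a_{ij}:1\le i\le j\le n}$ generated by \cref{a_jka_ik-is-qa_ika_jk,a_jka_jl-is-qa_jla_jk,a_ika_jl-is-a_jla_ik,a_jka_il-is-a^2a_ila_jk} coincides with the ideal generated by $\{a_{\bt_s}a_{\bt_t}-q_{st}a_{\bt_t}a_{\bt_s}:1\le s,t\le m\}$. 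Transporting this along the isomorphism $K\gen{x_1,\dots,x_m}\to K\gen{a_{ij}:1\le i\le j\le n}$, $x_s\mapsto a_{\bt_s}$, identifies the defining ideal of $A_m(Q)$ with that of $T_q(n)$, whence $T_q(n)\cong A_m(Q)$; here $Q$ depends only on $q$ and $n$, as all its entries are powers of $q$.

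The only delicate point is the bookkeeping in the second paragraph — confirming that the four index ranges tile the set of unordered pairs of generators with no overlaps and no gaps. The trap to watch for is the pair $\{a_{ik},a_{jl}\}$ with $i<j$ and $k>l$, which must be matched to \cref{a_jka_il-is-a^2a_ila_jk} and not to \cref{a_ika_jl-is-a_jla_ik}; relatedly, \cref{a_ika_jl-is-a_jla_ik,a_jka_il-is-a^2a_ila_jk} share the index range $i<j\le k<l$ but concern the two \emph{different} diagonal pairs $\{a_{ik},a_{jl}\}$ and $\{a_{il},a_{jk}\}$ of the square in \cref{E:rels-digram}, so no inconsistency arises. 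Alternatively, one could reach the same conclusion by presenting $T_q(n)$ in the lexicographic order as an iterated Ore extension and invoking the Remark in \cref{sec-prelim}, but the ideal comparison above is more economical.
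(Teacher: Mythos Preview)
Your proof is correct. The paper itself states this Corollary without proof, treating it as an immediate consequence of the form of the defining relations \cref{a_jka_ik-is-qa_ika_jk,a_jka_jl-is-qa_jla_jk,a_ika_jl-is-a_jla_ik,a_jka_il-is-a^2a_ila_jk} (and the diagram \cref{E:rels-digram}); your argument supplies exactly the bookkeeping the paper leaves implicit, namely that the four families of relations partition the set of unordered pairs of distinct generators, so that the presentation of $T_q(n)$ is literally that of a quantum affine space.
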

	
	\begin{example}\label{ex-T_q(2)}
		The algebra $T_q(2)$ is the quotient of $K\gen{a_{11},a_{12},a_{22}}$ by the ideal generated by
		\begin{align}
			a_{22}a_{12}&=qa_{12}a_{22},\label{a_22a_12-is-qa_12a_22}\\
			a_{11}a_{12}&=qa_{12}a_{11},\label{a_11a_12-is-qa_12a_11}\\
			a_{11}a_{22}&=a_{22}a_{11}.\label{a_11a_22-is-a_22a_11}
		\end{align}
		Observe that there are no relations of the form \cref{a_jka_il-is-a^2a_ila_jk}. It follows that $T_q(2)$ is isomorphic to $A_{3}(Q)$, where $Q=\left(\begin{smallmatrix} 1&q&1\\q^{-1}&1&q^{-1}\\ 1&q&1\end{smallmatrix}\right)$.
		The relations \cref{a_22a_12-is-qa_12a_22,a_11a_12-is-qa_12a_11,a_11a_22-is-a_22a_11} can be represented by the diagram\quad  \begin{tikzcd}
			a_{11} \arrow{r} & a_{12}\\  & a_{22}  \arrow{u}
		\end{tikzcd}.
	\end{example}

	Unlike the case of the quantum full matrix algebra~\cite[Proposition IV.3.3]{Kassel}, the center of $T_q(n)$ is trivial whenever $q$ is not a root of unity.
	
	\begin{proposition}\label{center-T_q(n)}
		If $q$ is not a root of unity, then $Z(T_q(n))=K$.
	\end{proposition}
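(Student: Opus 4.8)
The plan is to exploit the $\ZZ^m$-grading, with $m:=\binom{n+1}{2}$. Recall (from the preceding corollary and the Remark in \cref{sec-prelim}) that $T_q(n)\cong A_m(Q)$ has $K$-basis the monomials $a^\nu:=\prod_{i\le j}a_{ij}^{\nu_{ij}}$ (in lexicographic order), $\nu\in\NN^m$, each spanning a one-dimensional homogeneous component. For $z=\sum_\nu c_\nu a^\nu\in Z(T_q(n))$ and a generator $a_{rs}$ we have $a^\nu a_{rs}=\lambda_\nu a_{rs}a^\nu$ for a scalar $\lambda_\nu\in\langle q\rangle$, and $a^\nu a_{rs}$, $a_{rs}a^\nu$ are both homogeneous of degree $\nu+\deg a_{rs}$; comparing homogeneous components in $z\,a_{rs}=a_{rs}\,z$ forces $\lambda_\nu=1$ for every $\nu$ with $c_\nu\ne0$. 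Hence each monomial occurring in $z$ is central, and it suffices to show that $a^\nu$ central forces $\nu=0$ when $q$ is not a root of unity.

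First I would test $a^\nu$ against the diagonal generators $a_{rr}$. Inspecting which of the families \cref{a_jka_ik-is-qa_ika_jk,a_jka_jl-is-qa_jla_jk,a_ika_jl-is-a_jla_ik,a_jka_il-is-a^2a_ila_jk} involve $a_{rr}$ — equivalently, reading off the edges of \cref{E:rels-digram} at the vertex $(r,r)$, none of which points into $(r,r)$ — one finds $a_{rr}a_{ir}=qa_{ir}a_{rr}$ for $i<r$, $a_{rr}a_{rl}=qa_{rl}a_{rr}$ for $l>r$, and $a_{rr}a_{il}=q^2a_{il}a_{rr}$ for $i<r<l$, while $a_{rr}$ commutes with the remaining generators. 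Consequently
\[
a^\nu a_{rr}=q^{-e_r(\nu)}a_{rr}a^\nu,\qquad e_r(\nu)=\sum_{l>r}\nu_{rl}+\sum_{i<r}\nu_{ir}+2\sum_{i<r<l}\nu_{il}\in\NN .
\]
Since $q$ is not a root of unity, $a^\nu a_{rr}=a_{rr}a^\nu$ forces $e_r(\nu)=0$, and as every summand is nonnegative we get $\nu_{rl}=0$ for $l>r$ and $\nu_{ir}=0$ for $i<r$. Letting $r$ range over $1,\dots,n$ kills all off-diagonal exponents, so $\nu_{ij}=0$ whenever $i<j$ and $a^\nu=\prod_{r=1}^na_{rr}^{\nu_{rr}}$.

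It then remains to kill the diagonal exponents, for which I would pair $a^\nu$ against the generators $a_{r,r+1}$ immediately above the diagonal. By \cref{a_jka_ik-is-qa_ika_jk,a_jka_jl-is-qa_jla_jk} one has $a_{rr}a_{r,r+1}=qa_{r,r+1}a_{rr}$ and $a_{r+1,r+1}a_{r,r+1}=qa_{r,r+1}a_{r+1,r+1}$, while $a_{kk}$ commutes with $a_{r,r+1}$ for $k\notin\{r,r+1\}$; hence $a^\nu a_{r,r+1}=q^{\nu_{rr}+\nu_{r+1,r+1}}a_{r,r+1}a^\nu$, and commutativity (again using that $q$ is not a root of unity) forces $\nu_{rr}+\nu_{r+1,r+1}=0$, i.e.\ $\nu_{rr}=\nu_{r+1,r+1}=0$. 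Running $r$ from $1$ to $n-1$ gives $\nu=0$. Thus $Z(T_q(n))\sst K$, and since the reverse inclusion is trivial we conclude $Z(T_q(n))=K$.

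The only real work is the two displayed commutation formulas, i.e.\ the bookkeeping of which defining relations contribute to $a^\nu a_{rr}$ and $a^\nu a_{r,r+1}$ and with which power of $q$; here the diagram \cref{E:rels-digram} makes things immediate and, crucially, shows that all the exponents picked up in $e_r(\nu)$ have the same sign (because no arrow is directed into $(r,r)$), which is exactly what lets the nonnegativity of the $\nu_{ij}$ do the rest. The hypothesis that $q$ is not a root of unity enters only through the implication $q^N=1\Rightarrow N=0$.
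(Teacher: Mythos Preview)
Your proof is correct and follows essentially the same strategy as the paper's: reduce to central monomials via the $\ZZ^m$-grading (the paper cites \cite[p.~1788]{Alev-Chamarie} for this, you derive it directly), then test against well-chosen generators and use nonnegativity of the exponents together with $q$ not being a root of unity. The only organizational difference is that you test against all $a_{rr}$ and all $a_{r,r+1}$ at once, whereas the paper tests against $a_{11}$ and $a_{12}$ to kill the first row and $\nu_{11},\nu_{22}$, and then observes that the remaining monomial lies in the subalgebra generated by $\{a_{ij}:2\le i\le j\le n\}\cong T_q(n-1)$, finishing by induction on $n$.
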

	\begin{proof}
		By \cite[p.~1788]{Alev-Chamarie}, $Z(T_q(n))$ is the $K$-space generated by those monomials 
		\begin{equation*}
			f=a_{11}^{\nu_{11}}\cdots a_{1n}^{\nu_{1n}}a_{22}^{\nu_{22}}\cdots a_{2n}^{\nu_{2n}}\cdots a_{n-1,n-1}^{\nu_{n-1,n-1}}a_{n-1,n}^{\nu_{n-1,n}}a_{nn}^{\nu_{nn}}, 
		\end{equation*}
		with $\nu_{kl}\in \NN$, which are central. 
		
		Now, $a_{11}f=q^{\sum_{i=2}^n\nu_{1i}} fa_{11}$, so $\sum_{i=2}^n\nu_{1i}=0$ and $\nu_{1i}=0$ for all $2\leq i\leq n$. Similarly, using the above, we have $a_{12}f=q^{-\nu_{11}-\nu_{22}}fa_{12}$, so $\nu_{11}=0=\nu_{22}$. This established the result in case $n=2$. Suppose that $n>2$. We have shown that $f$ is in the subalgebra of $T_q(n)$ generated by $\{a_{ij}: 2\leq i\leq j\leq n\}$, and is thus a central monomial there. Since this subalgebra is isomorphic to $T_q(n-1)$, it follows by induction that $f=1$, thus proving the claim.
	\end{proof}

	In~\cite{T90}, Takeuchi introduced a bialgebra $M_{\alpha, \beta}$, for $\alpha, \beta\in K^*$, on generators $x_{ij}$, with $1\leq i, j\leq n$. Taking $(\alpha, \beta)=(q^{-1}, q)$, these generators satisfy relations similar to those represented in \cref{E:rels-digram}, so it follows that the $K$-algebra $T_q(n)$ is isomorphic both to the (unital) subalgebra of $M_{q^{-1}, q}$ generated by $\{x_{ij} : 1\leq i\leq j\leq n\}$ and to the quotient $M_{q^{-1}, q}/L$, where $L$ is the two-sided ideal of $M_{q^{-1}, q}$ generated by $\{x_{ij} : j<i\}$. 
	
	The bialgebra structure on $M_{q^{-1}, q}$ is given by the comultiplication $\overline\Delta$ satisfying $\overline\Delta(x_{ij})=\sum_{k=1}^n x_{ik}\otimes x_{kj}$ and the counit $\overline\ve$ such that $\overline\ve(x_{ij})=\dl_{ij}$. Since, for $n\geq 2$, the subalgebra generated by $\{x_{ij} : 1\leq i\leq j\leq n\}$ is not a subcoalgebra, we will need to consider the latter realization of $T_q(n)$ as a quotient of $M_{q^{-1}, q}$. Indeed, since for $j<i$
	\begin{align*}
		\overline\Delta(x_{ij})=\sum_{k=1}^{i-1} x_{ik}\otimes x_{kj} + \sum_{k=i}^n x_{ik}\otimes x_{kj}\in L\otimes M_{q^{-1}, q} + M_{q^{-1}, q}\otimes L
	\end{align*}
	and $\overline\ve(x_{ij})=0$, it follows that $L$ is also a coideal of $M_{q^{-1}, q}$, so $M_{q^{-1}, q}/L$ inherits a bialgebra structure.
	
	Thence, identifying $a_{ij}\in T_q(n)$ with $x_{ij}+L\in M_{q^{-1}, q}/L$, for all $1\le i\le j\le n$, we conclude that $T_q(n)$ is a bialgebra.

	\begin{theorem}
		There are algebra morphisms $\Dl:T_q(n)\to T_q(n)\otimes T_q(n)$ and $\ve:T_q(n)\to K$, given by $\Dl(a_{ij})=\sum_{k=i}^ja_{ik}\otimes a_{kj}$ and $\ve(a_{ij})=\dl_{ij}$, $1\le i\le j\le n$. These define a bialgebra structure on $T_q(n)$.
	\end{theorem}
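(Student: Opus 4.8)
The plan is to verify that $\Dl$ and $\ve$ are well-defined algebra morphisms and then check the bialgebra axioms (coassociativity and counit), with the former being the substantive point and the latter being a routine computation on generators.

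\medskip

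\noindent\textbf{Well-definedness of $\Dl$.} To define $\Dl$ on the free algebra $K\gen{a_{ij}}$ it suffices to send each generator $a_{ij}$ to $\sum_{k=i}^j a_{ik}\otimes a_{kj}\in T_q(n)\otimes T_q(n)$; the universal property gives an algebra morphism $K\gen{a_{ij}}\to T_q(n)\otimes T_q(n)$, and the issue is that the defining relations \cref{a_jka_ik-is-qa_ika_jk,a_jka_jl-is-qa_jla_jk,a_ika_jl-is-a_jla_ik,a_jka_il-is-a^2a_ila_jk} of $T_q(n)$ are sent to zero. Here I would exploit the ``$R$-point'' machinery already set up. Consider the two commuting families $A^{(1)}=(a_{ij}\otimes 1)$ and $A^{(2)}=(1\otimes a_{ij})$ in $R:=T_q(n)\otimes T_q(n)$: each is an $R$-point of $T_q(n)$ (the relations hold because they hold in each tensor factor), and $a_{ij}\otimes 1$ commutes with $1\otimes a_{kl}\otimes$-componentwise, so the hypotheses of \cref{AB-T_q(n)-point} are satisfied. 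Therefore the product $A^{(1)}A^{(2)}=\bigl(\sum_{k=i}^j (a_{ik}\otimes 1)(1\otimes a_{kj})\bigr)_{ij}=\bigl(\sum_{k=i}^j a_{ik}\otimes a_{kj}\bigr)_{ij}$ is again an $R$-point of $T_q(n)$; by \cref{R-point<->morphism} this is exactly the statement that the assignment $a_{ij}\mapsto\sum_{k=i}^j a_{ik}\otimes a_{kj}$ respects the relations, i.e.\ descends to an algebra morphism $\Dl:T_q(n)\to T_q(n)\otimes T_q(n)$. Thus \cref{AB-T_q(n)-point} does essentially all the work here, and this is the step I expect to be the crux — everything afterwards is formal.

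\medskip

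\noindent\textbf{Well-definedness of $\ve$.} The assignment $a_{ij}\mapsto\dl_{ij}$ extends to an algebra morphism $K\gen{a_{ij}}\to K$; one checks it kills the relations by direct substitution: in \cref{a_jka_ik-is-qa_ika_jk} one has $i<j\le k$ so $\dl_{jk}\dl_{ik}=0=\dl_{ik}\dl_{jk}$, and similarly each of \cref{a_jka_jl-is-qa_jla_jk,a_ika_jl-is-a_jla_ik,a_jka_il-is-a^2a_ila_jk} involves a strict inequality forcing at least one Kronecker delta in each monomial to vanish (in \cref{a_ika_jl-is-a_jla_ik} both $\dl_{ik}$ and $\dl_{jl}$ could be nonzero, but then the relation reads $\dl_{ik}\dl_{jl}=\dl_{jl}\dl_{ik}$, which is trivially true). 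Hence $\ve$ is a well-defined algebra morphism $T_q(n)\to K$. Alternatively, $(\dl_{ij})$ is visibly an $R$-point of $T_q(n)$ for $R=K$, which gives the same conclusion via \cref{R-point<->morphism}.

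\medskip

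\noindent\textbf{Bialgebra axioms.} It remains to check $(\Dl\otimes\id)\Dl=(\id\otimes\Dl)\Dl$ and $(\ve\otimes\id)\Dl=\id=(\id\otimes\ve)\Dl$. Since all maps in sight are algebra morphisms and $T_q(n)$ is generated by the $a_{ij}$, it suffices to verify these on generators. For coassociativity, both $(\Dl\otimes\id)\Dl(a_{ij})$ and $(\id\otimes\Dl)\Dl(a_{ij})$ compute to $\sum_{i\le k\le l\le j} a_{ik}\otimes a_{kl}\otimes a_{lj}$, by the same telescoping indexing argument as for the usual comultiplication on the coordinate ring of matrices. For the counit, $(\ve\otimes\id)\Dl(a_{ij})=\sum_{k=i}^j \dl_{ik} a_{kj}=a_{ij}$ (only $k=i$ survives, since $i\le k$ forces $\dl_{ik}\ne0$ only at $k=i$), and symmetrically $(\id\otimes\ve)\Dl(a_{ij})=\sum_{k=i}^j a_{ik}\dl_{kj}=a_{ij}$. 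This establishes that $(T_q(n),\Dl,\ve)$ is a bialgebra. \qed
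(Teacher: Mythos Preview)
Your proof is correct and follows essentially the same approach as the paper: the well-definedness of $\Dl$ is reduced to \cref{AB-T_q(n)-point} applied to the commuting $R$-points $(a_{ij}\otimes 1)$ and $(1\otimes a_{ij})$ in $R=T_q(n)\otimes T_q(n)$, the well-definedness of $\ve$ is the observation that $(\dl_{ij})$ is a $K$-point, and the coassociativity/counit axioms are verified on generators exactly as for an incidence coalgebra. The only difference is that you spell out the latter verification explicitly, whereas the paper simply cites the incidence-coalgebra computation.
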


	\section{The Hopf algebra $UT_q(n)$}
	
	Now, we are going to introduce the analog for upper triangular matrices of the well-known Hopf algebra $GL_q(n)$ (see \cite{RTF89}, \cite[IV.6, IV.10]{Kassel} and references therein).

	\begin{lemma}\label{commut-a_ij-prod-a_xx}
		Let $X\sst [1,n]$ and $1\le i<j\le n$. Then the following equality holds in $T_q(n)$:
		\begin{align}\label{a_ij.prod-a_kk=q^(-2|X-cap-[i_j]|).prod-a_kk.a_ij}
			a_{ij}\left(\prod_{x\in X}a_{xx}\right)=q^m\left(\prod_{x\in X}a_{xx}\right)a_{ij},
		\end{align}
		where $m=-2|\{x\in X: i<x<j\}|-|X\cap\{i,j\}|$.
	\end{lemma}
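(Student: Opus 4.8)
The plan is to reduce the statement to the three elementary commutation rules between $a_{ij}$ and a single diagonal generator $a_{xx}$, and then multiply these rules together one factor at a time. First I would record, directly from the defining relations \cref{a_jka_ik-is-qa_ika_jk,a_jka_jl-is-qa_jla_jk,a_ika_jl-is-a_jla_ik,a_jka_il-is-a^2a_ila_jk} applied with the appropriate specializations of indices, the following three cases for a single $x\in[1,n]$ and fixed $i<j$:
\begin{itemize}
\item if $x=i$, then the relevant relation is \cref{a_jka_jl-is-qa_jla_jk} with $(j,k,l)=(i,i,j)$ (i.e.\ $a_{ii}a_{ij}=q\m a_{ij}a_{ii}$ read as $a_{ij}a_{ii}=qa_{ii}a_{ij}$\,)\,---\,wait, more carefully: $a_{ij}a_{ii}$ relates to $a_{ii}a_{ij}$ by a factor $q^{-1}$, so $a_{ij}a_{ii}=q\m a_{ii}a_{ij}$, contributing $-1$ to the exponent;
\item if $x=j$, then \cref{a_jka_ik-is-qa_ika_jk} with $(i,j,k)=(i,j,j)$ gives $a_{jj}a_{ij}=qa_{ij}a_{jj}$, i.e.\ $a_{ij}a_{jj}=q\m a_{jj}a_{ij}$, again contributing $-1$;
\item if $i<x<j$, then \cref{a_jka_il-is-a^2a_ila_jk} (the double-arrow relation) with the four indices specialized to $(i,x,x,j)$ gives $a_{xx}a_{ij}=q^2a_{ij}a_{xx}$, i.e.\ $a_{ij}a_{xx}=q^{-2}a_{xx}a_{ij}$, contributing $-2$;
\item if $x<i$ or $x>j$, then $a_{ij}$ and $a_{xx}$ commute (this is where the absence of an edge in \cref{E:rels-digram} applies: for $x<i$ one uses \cref{a_ika_jl-is-a_jla_ik} type commutation, and for $x>j$ similarly), contributing $0$.
\end{itemize}
In each case the exponent contributed matches exactly the term $-2[i<x<j]-[x\in\{i,j\}]$ appearing in the formula for $m$.

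The second and main step is the induction on $|X|$. The base case $|X|=0$ is trivial. For the inductive step, write $X=X'\sqcup\{x_0\}$ and move $a_{ij}$ past $\prod_{x\in X}a_{xx}$ by first commuting it past $a_{x_0x_0}$ using the single-generator rule above\,---\,but here one must be careful about \emph{which} factor one peels off and in what order the product $\prod_{x\in X}a_{xx}$ is taken. A clean way to avoid ordering issues is to note first that all the diagonal generators $a_{xx}$, $1\le x\le n$, pairwise commute: indeed, for $x<y$, relation \cref{a_ika_jl-is-a_jla_ik} with $(i,k,j,l)=(x,x,y,y)$ gives $a_{xx}a_{yy}=a_{yy}a_{xx}$. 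Hence $\prod_{x\in X}a_{xx}$ is a well-defined element independent of the order, and I can peel off the factors of $X$ one by one in any convenient order, at each stage applying the appropriate single-generator commutation rule to accumulate the exponent $-2[i<x<j]-[x\in\{i,j\}]$. Summing these over $x\in X$ yields $m=-2|\{x\in X: i<x<j\}|-|X\cap\{i,j\}|$, as claimed.

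The step I expect to be the only real (though minor) obstacle is the bookkeeping in the case $i<x<j$: one must verify that relation \cref{a_jka_il-is-a^2a_ila_jk}, which a priori involves four distinct indices $i<j\le k<l$, does specialize correctly to the diagonal situation $a_{xx}$ versus $a_{ij}$. Setting the outer index pair to $(i,l)=(i,j)$ and the inner pair to $(j,k)=(x,x)$ requires $i<x\le x<j$, i.e.\ exactly $i<x<j$, and then \cref{a_jka_il-is-a^2a_ila_jk} reads $a_{xx}a_{ij}=q^2a_{ij}a_{xx}$; one should double-check that the hypotheses $i<j\le k<l$ of that relation are met with this substitution (they are: $i<x$, $x\le x$, $x<j$). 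Everything else is a routine induction, so no further difficulties are anticipated.
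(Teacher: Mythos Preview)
Your proposal is correct and follows essentially the same approach as the paper: the paper's proof simply records the four single-generator commutation rules $a_{ij}a_{ii}=q^{-1}a_{ii}a_{ij}$, $a_{ij}a_{jj}=q^{-1}a_{jj}a_{ij}$, $a_{ij}a_{xx}=q^{-2}a_{xx}a_{ij}$ for $i<x<j$, and $a_{ij}a_{xx}=a_{xx}a_{ij}$ for $x\notin[i,j]$, and then concludes immediately. Your version is just a more explicit rendition of the same argument, including the (harmless) observation that the diagonal generators pairwise commute and the routine induction on $|X|$ that the paper leaves implicit.
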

	\begin{proof}
		We have $a_{ij}a_{ii}=q^{-1}a_{ii}a_{ij}$, $a_{ij}a_{jj}=q^{-1}a_{jj}a_{ij}$, $a_{ij}a_{xx}=q^{-2}a_{xx}a_{ij}$ for all $i<x<j$ and $a_{ij}a_{xx}=a_{xx}a_{ij}$ for all $x\not\in[i,j]$, whence \cref{a_ij.prod-a_kk=q^(-2|X-cap-[i_j]|).prod-a_kk.a_ij}.
	\end{proof}
	
	Set $\dett:=\prod_{i=1}^n a_{ii}\in T_q(n)$.
	
	\begin{corollary}\label{det_a_ij}
		Let $1\le i\le j\le n$. Then 
		\begin{align}\label{det.a_ij=q^(2(j-1))a_ij.det}
			\dett \cdot a_{ij}=q^{2(j-i)}a_{ij}\cdot \dett.
		\end{align}
	\end{corollary}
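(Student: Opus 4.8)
The statement to prove is Corollary~\ref{det_a_ij}: that $\dett\cdot a_{ij}=q^{2(j-i)}a_{ij}\cdot\dett$ for $1\le i\le j\le n$, where $\dett=\prod_{k=1}^n a_{kk}$. The plan is to deduce this directly from \cref{commut-a_ij-prod-a_xx} by specializing $X$. First I would dispose of the diagonal case $i=j$: here $j-i=0$ and the claim reads $\dett\cdot a_{ii}=a_{ii}\cdot\dett$, which is immediate since the $a_{kk}$ pairwise commute (relation \cref{a_ika_jl-is-a_jla_ik} applied to $a_{kk}$ and $a_{ll}$ gives $a_{kk}a_{ll}=a_{ll}a_{kk}$), so $a_{ii}$ commutes with the whole product $\dett$.

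For the off-diagonal case $i<j$, I would invoke \cref{commut-a_ij-prod-a_xx} with $X=[1,n]=\{1,2,\dots,n\}$, so that $\prod_{x\in X}a_{xx}=\dett$. Then the exponent $m$ in that lemma is
\begin{align*}
m=-2\,|\{x\in[1,n]: i<x<j\}|-|[1,n]\cap\{i,j\}|=-2(j-i-1)-2=-2(j-i),
\end{align*}
using that there are exactly $j-i-1$ integers strictly between $i$ and $j$, and that both $i$ and $j$ lie in $[1,n]$. Hence \cref{commut-a_ij-prod-a_xx} yields $a_{ij}\,\dett=q^{-2(j-i)}\,\dett\,a_{ij}$, and multiplying both sides by $q^{2(j-i)}$ gives exactly \cref{det.a_ij=q^(2(j-i))a_ij.det}. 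One should note that this uniform formula also correctly reproduces the diagonal case $i=j$ (exponent $0$), so the two cases can even be merged into one line if one is willing to apply the lemma only for $i<j$ and remark separately on $i=j$.

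There is essentially no obstacle here: the corollary is a one-line consequence of the preceding lemma once the set $X$ is chosen to be all of $[1,n]$ and the combinatorial count of $m$ is carried out. The only point requiring a moment's care is the bookkeeping of the exponent — correctly counting $j-i-1$ interior indices and the two boundary indices $i,j$ — but this is routine. I would present the proof as: apply \cref{commut-a_ij-prod-a_xx} with $X=[1,n]$, compute $m=-2(j-i)$, and rearrange.
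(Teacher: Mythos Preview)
Your proposal is correct and is exactly the intended argument: the paper states this as a corollary of \cref{commut-a_ij-prod-a_xx} with no separate proof, and your specialization $X=[1,n]$ together with the count $m=-2(j-i-1)-2=-2(j-i)$ (plus the easy diagonal case via \cref{a_ika_jl-is-a_jla_ik}) is precisely what is meant.
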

	
	Referring back to~\cite{T90}, Takeuchi also defines a quantum determinant $g$ in $M_{\alpha, \beta}$ and $\dett$ is precisely the class of $g$ in $M_{q^{-1}, q}/L$. Since $g$ is normal~\cite[p. 214]{Goodearl-Warfield}, it generates an Ore set~\cite[p. 82]{Goodearl-Warfield} in $M_{q^{-1}, q}$. Let $M_{q^{-1}, q}[g^{-1}]$ be the corresponding localization, and similarly for $T_q(n)[\dett^{-1}]$. Both $g$ and $\dett$ are group-like (in fact, $\dett$ is the product of the group-likes $a_{ii}$), so the bialgebra structures on $T_q(n)$ and $M_{q^{-1}, q}$ extend to the respective localizations, with $g^{-1}$ and $\dett^{-1}$ group-like. Moreover, $M_{q^{-1}, q}[g^{-1}]$ is a Hopf algebra, with antipode $\overline S$ defined in~\cite{T90} in terms of quantum minors. Let $L[g^{-1}]$ be the ideal of $M_{q^{-1}, q}[g^{-1}]$ generated by $\{x_{ij} : j<i\}$. Then, it follows that the isomorphism $M_{q^{-1}, q}/L\cong T_q(n)$ extends via localization to a bialgebra isomorphism $M_{q^{-1}, q}[g^{-1}]/L[g^{-1}]\cong T_q(n)[\dett^{-1}]$.
	
	\begin{definition}
		Set $UT_q(n):=T_q(n)[\dett^{-1}]$. Since $T_q(n)$ is a domain, $T_q(n)$ embeds as a subalgebra of $UT_q(n)$. Moreover, since $\dett=\prod_{i=1}^n a_{ii}$, it follows that $UT_q(n)$ can also be seen as the localization of $T_q(n)$ at the multiplicative set generated by $\{a_{ii}\}_{i=1}^n$; in particular, all $a_{ii}$ are invertible in $UT_q(n)$.
	\end{definition}
	
	\begin{theorem}
		The bialgebra $UT_q(n)$ is a Hopf algebra. 
	\end{theorem}
	\begin{proof}
		To prove that $UT_q(n)$ is a Hopf algebra, it suffices to show that $\overline S(x_{ij})\in L[g^{-1}]$ for all $j<i$. This can be readily checked using the fact that every monomial in the quantum minor from the definition of $\overline S(x_{ij})$ in~\cite{T90} contains a factor in $\{x_{ij} : j<i\}$. Hence $M_{q^{-1}, q}[g^{-1}]/L[g^{-1}]$ is a Hopf algebra and using the bialgebra isomorphism $M_{q^{-1}, q}[g^{-1}]/L[g^{-1}]\cong UT_q(n)$, so is $UT_q(n)$. 
	\end{proof}
	
	Note that, in a Hopf algebra, the antipode is the inverse of the identity map relative to the convolution product. We denote the (unique) antipode of $UT_q(n)$ by $S$. Before we give an explicit description of $S$, we want to remark that there is an equivalent construction of $UT_q(n)$ using the skew polynomial algebra $T_q(n)[t;\sg]$, where $\sg$ is an automorphism of the algebra $T_q(n)$ that takes~\cref{det.a_ij=q^(2(j-1))a_ij.det} into account. We begin with a general result on extending a bialgebra structure from a bialgebra $A$ to a skew polynomial algebra $A[t;\sg]$.
	
	\begin{lemma}
		Let $(A,\mu,\eta,\Dl,\ve)$ be a bialgebra and $\sg\in\Aut(A,\mu,\eta)$. Then $\Dl(t)=t\otimes t$ and $\ve(t)=1$ extend the bialgebra structure to the skew polynomial algebra $A[t;\sg]$  if and only if $\sg$ is a bialgebra automorphism of $(A,\mu,\eta,\Dl,\ve)$. 
	\end{lemma}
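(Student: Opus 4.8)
The plan is to verify directly that the proposed structure maps on $A[t;\sg]$ respect all the bialgebra axioms, and to track which of those verifications force $\sg$ to be a coalgebra morphism. First I would recall that, as an algebra, $A[t;\sg]$ is generated by $A$ together with the single element $t$, subject to the relations $ta=\sg(a)t$ for $a\in A$. To extend $\Dl$ and $\ve$ to algebra morphisms $A[t;\sg]\to A[t;\sg]\otimes A[t;\sg]$ and $A[t;\sg]\to K$ with the prescribed values on $t$, I need precisely that these assignments are compatible with the defining relations. For $\ve$, applying it to $ta=\sg(a)t$ gives $\ve(a)=\ve(\sg(a))$, so $\ve$ extends to an algebra morphism if and only if $\ve\circ\sg=\ve$. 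For $\Dl$, applying the multiplicativity requirement to $ta=\sg(a)t$ yields the condition
\begin{align*}
(t\otimes t)\Dl(a)=\Dl(\sg(a))(t\otimes t)
\end{align*}
in $A[t;\sg]\otimes A[t;\sg]$. Using the relation on each tensor factor, $(t\otimes t)(a_{(1)}\otimes a_{(2)})=(\sg(a_{(1)})\otimes\sg(a_{(2)}))(t\otimes t)=((\sg\otimes\sg)\Dl(a))(t\otimes t)$, so this condition is equivalent to $(\sg\otimes\sg)\circ\Dl=\Dl\circ\sg$, i.e. $\sg$ is comultiplicative. Hence $\sg$ being a bialgebra automorphism is exactly what is needed for $\Dl$ and $\ve$ to be well-defined algebra morphisms on $A[t;\sg]$.

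Next I would check that, once well-defined, these maps automatically satisfy the coassociativity, counit, and unit/multiplication compatibility axioms making $A[t;\sg]$ a bialgebra. This is routine: coassociativity $(\Dl\otimes\id)\Dl=(\id\otimes\Dl)\Dl$ and the counit axioms $(\ve\otimes\id)\Dl=\id=(\id\otimes\ve)\Dl$ hold on the subalgebra $A$ by hypothesis, they hold on the generator $t$ because $\Dl(t)=t\otimes t$ and $\ve(t)=1$ make $t$ a grouplike element, and since both sides of each identity are algebra morphisms out of $A[t;\sg]$ agreeing on the algebra generators $A\cup\{t\}$, they agree everywhere. The forward direction is the easy half: if the bialgebra structure extends, then in particular $\Dl$ and $\ve$ are defined on $A[t;\sg]$ and restrict to the original bialgebra structure on $A$, and the computation above run in reverse shows $\sg$ must be comultiplicative and counital, hence a bialgebra automorphism.

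The only genuinely delicate point is making sure that the relation-compatibility computations are carried out in the correct ring, namely $A[t;\sg]\otimes A[t;\sg]$ rather than $A\otimes A$, and that the Sweedler-notation manipulation $(t\otimes t)(a_{(1)}\otimes a_{(2)})=((\sg\otimes\sg)\Dl(a))(t\otimes t)$ is justified by the commutation rule on each factor. Once that bookkeeping is in place there is no real obstacle. I would present the proof as: (1) identify the defining relations of $A[t;\sg]$; (2) show $\ve$ extends iff $\ve\circ\sg=\ve$; (3) show $\Dl$ extends iff $(\sg\otimes\sg)\circ\Dl=\Dl\circ\sg$; (4) observe these two conditions together say exactly that $\sg$ is a bialgebra automorphism; (5) note that when they hold, the bialgebra axioms for $A[t;\sg]$ follow since both sides of each axiom are algebra maps agreeing on generators, $t$ being grouplike; (6) the converse is the reverse reading of (2)–(3).
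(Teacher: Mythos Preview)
Your argument is correct and reaches the same conclusion through the same key computation: the compatibility of $\Dl$ and $\ve$ with the Ore relation $ta=\sg(a)t$ is equivalent to $(\sg\otimes\sg)\circ\Dl=\Dl\circ\sg$ and $\ve\circ\sg=\ve$.

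The organization differs slightly from the paper's. The paper first observes that, as a vector space, $A[t;\sg]\cong A\otimes K[t]$, and endows this with the tensor-product coalgebra structure $\Dl'(at^n)=\sum a_{(1)}t^n\otimes a_{(2)}t^n$, $\ve'(at^n)=\ve(a)$; coassociativity and the counit axiom then come for free, and the only question is whether $\Dl'$ and $\ve'$ are algebra morphisms, which reduces to checking $\Dl'(ta)=\Dl'(t)\Dl'(a)$ and $\ve'(ta)=\ve'(t)\ve'(a)$. You instead use the universal property of the Ore extension to \emph{define} $\Dl$ and $\ve$ as algebra morphisms, and then verify the coalgebra axioms separately on generators. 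Both routes are equally valid; the paper's saves the step of checking coassociativity and counitality (since these are inherited from the tensor-product coalgebra), while yours makes the well-definedness argument slightly more transparent via the universal property. The substantive content is identical.
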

	\begin{proof}
		It is well-known (see, for example, \cite[Exercise 2.1.19]{Radford}) that the tensor product of two coalgebras $(C,\Dl_C,\ve_C)$ and $(D,\Dl_D,\ve_D)$ is a coalgebra under $\Dl_{C\ot D}=(\id_C\ot\tau_{C,D}\ot\id_D)\circ(\Dl_C\ot\Dl_D)$, where $\tau_{C,D}:C\ot D\to D\ot C$ is the flip map, and $\ve_{C\ot D}=\ve_C\ot \ve_D$. Hence, there is a natural coalgebra structure on the vector space $A[t;\sg]\cong A\otimes K[t]$ (vector space isomorphism!), such that 
		\begin{align}\label{Dl'-and-ve'-on-K[t_sg]}
			\Dl'(at^n)=\sum a_{(1)}t^n\otimes a_{(2)}t^n\text{ and }\ve'(at^n)=\ve(a) 
		\end{align}
		(here we use Sweedler's notation for $\Dl(a)=\sum a_{(1)}\ot a_{(2)}$ and identify $at^n\in A[t;\sg]$ with $a\otimes t^n\in A\otimes K[t]$). Thus, $A[t;\sg]$ is a bialgebra if and only if $\Dl'$ and $\ve'$ are algebra morphisms $A[t;\sg]\to A[t;\sg]\ot A[t;\sg]$ and $A[t;\sg]\to K$, respectively. Observe that \cref{Dl'-and-ve'-on-K[t_sg]} already means that $\Dl'(at)=\Dl'(a)\Dl'(t)$ and $\ve'(at)=\ve'(a)\ve'(t)$ for all $a\in A$. Therefore, $\Dl'$ is an algebra morphism if and only if 
		\begin{align}\label{Dl'(ta)=Dl'(t)Dl'(t)}
			\Dl'(ta)=\Dl'(t)\Dl'(a)    
		\end{align}
		for all $a\in A$. Now, 
		\begin{align*}
			\Dl'(t)\Dl'(a)&=\sum ta_{(1)}\otimes ta_{(2)}=\sum \sg(a_{(1)})t\otimes \sg(a_{(2)})t=((\sg\ot\sg)\circ\Dl)(a)(t\ot t),\\
			\Dl'(ta)&=\Dl'(\sg(a)t)=\sum \sg(a)_{(1)}t\ot \sg(a)_{(2)}t=(\Dl\circ\sg)(a)(t\ot t). 
		\end{align*}
		Thus, \cref{Dl'(ta)=Dl'(t)Dl'(t)} is equivalent to $(\sg\ot\sg)\circ\Dl=\Dl\circ\sg$. Similarly, $\ve'$ is an algebra morphism if and only if $\ve'(ta)=\ve'(t)\ve'(a)$ for all $a\in A$ if and only if $\ve\circ\sg=\ve$.
	\end{proof}

	\begin{lemma}\label{sg-auto-bialg}
		There is a bialgebra automorphism $\sg\in\Aut(T_q(n))$ such that 
		\begin{align}\label{sg(a_ij)=q^(2(i-j))a_ij}
			\sg(a_{ij})=q^{2(i-j)}a_{ij}
		\end{align}
		for all $1\le i\le j\le n$.
	\end{lemma}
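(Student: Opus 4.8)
The plan is to first show that $\sg$ is a well-defined algebra automorphism of $T_q(n)$, and then to verify the two identities $\ve\circ\sg=\ve$ and $(\sg\ot\sg)\circ\Dl=\Dl\circ\sg$, which by the previous lemma is precisely what it means for $\sg$ to be a bialgebra automorphism of $T_q(n)$.

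For the algebra-automorphism part I would prove the slightly more general fact that, for any family of scalars $(\lb_{ij})_{1\le i\le j\le n}$ in $K^*$, the assignment $a_{ij}\mapsto\lb_{ij}a_{ij}$ extends to an algebra automorphism of $T_q(n)$. The point is that each defining relation \cref{a_jka_ik-is-qa_ika_jk,a_jka_jl-is-qa_jla_jk,a_ika_jl-is-a_jla_ik,a_jka_il-is-a^2a_ila_jk} has the shape $uv=\mu\,vu$ for generators $u,v\in\{a_{ij}\}$ and a scalar $\mu\in K^*$; under the rescaling both sides get multiplied by $\lb_u\lb_v$, so the relation is preserved. Hence the map descends from $K\gen{a_{ij}:1\le i\le j\le n}$ to an algebra endomorphism of $T_q(n)$, and since the scalars $\lb_{ij}^{-1}$ give a map that is a two-sided inverse on generators, it is in fact an automorphism. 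Taking $\lb_{ij}=q^{2(i-j)}$ produces $\sg$ (with $\sg^{-1}$ given by $a_{ij}\mapsto q^{2(j-i)}a_{ij}$). Alternatively, one checks directly that $(q^{2(i-j)}a_{ij})_{1\le i\le j\le n}$ satisfies relations \cref{a_jka_ik-is-qa_ika_jk,a_jka_jl-is-qa_jla_jk,a_ika_jl-is-a_jla_ik,a_jka_il-is-a^2a_ila_jk}, i.e.\ is a $T_q(n)$-point of $T_q(n)$.

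For the bialgebra conditions, note that $\ve\circ\sg$ and $\ve$ are both algebra maps $T_q(n)\to K$, and $(\sg\ot\sg)\circ\Dl$ and $\Dl\circ\sg$ are both algebra maps $T_q(n)\to T_q(n)\ot T_q(n)$ (using that $\sg$, $\Dl$ and $\ve$ are all algebra morphisms); so in each case it suffices to compare the two maps on the generators $a_{ij}$. For the counit, $\ve(\sg(a_{ij}))=q^{2(i-j)}\dl_{ij}=\dl_{ij}=\ve(a_{ij})$. For the comultiplication,
\begin{align*}
\Dl(\sg(a_{ij}))&=q^{2(i-j)}\sum_{k=i}^j a_{ik}\ot a_{kj}, &
(\sg\ot\sg)(\Dl(a_{ij}))&=\sum_{k=i}^j q^{2(i-k)}a_{ik}\ot q^{2(k-j)}a_{kj},
\end{align*}
and since the exponents telescope, $2(i-k)+2(k-j)=2(i-j)$, the two expressions coincide. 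This finishes the proof.

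I do not anticipate a genuine obstacle here: the algebra-automorphism part is the standard observation that diagonal torus rescalings act on a quantum affine space, and the bialgebra conditions reduce — after testing on generators — to the trivial identity $\dl_{ij}=q^{2(i-j)}\dl_{ij}$ and to the exponent telescoping above. The only point that needs a little care is to invoke explicitly that $\sg$, $\sg^{-1}$, $\Dl$ and $\ve$ are algebra morphisms, so that it is legitimate to verify the required equalities on generators rather than on arbitrary elements.
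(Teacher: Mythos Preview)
Your proposal is correct and follows essentially the same approach as the paper: both verify that the rescaled generators form a $T_q(n)$-point (you phrase this more generally as a diagonal-rescaling fact, but also note the direct check), observe the inverse is given by $a_{ij}\mapsto q^{2(j-i)}a_{ij}$, and then confirm the bialgebra identities on generators via the same telescoping computation $2(i-k)+2(k-j)=2(i-j)$ and the trivial identity $q^{2(i-j)}\dl_{ij}=\dl_{ij}$.
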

	\begin{proof}
		It is clear from~\cref{det.a_ij=q^(2(j-1))a_ij.det} that the right-hand side of \cref{sg(a_ij)=q^(2(i-j))a_ij} is $\dett^{-1}a_{ij}\dett$, so the right conjugation by $\dett$ in $UT_q(n)$ restricts to an algebra automorphism of $T_q(n)$, which we denote by $\sg$. It is a bialgebra automorphism, since $\dett$ is a group-like element.
	\end{proof}
	
	\begin{corollary}
		Let $\sg\in\Aut(T_q(n))$ be given by \cref{sg(a_ij)=q^(2(i-j))a_ij}. Then the bialgebra structure on $T_q(n)$ extends to a bialgebra structure on $T_q(n)[t;\sg]$ by means of $\Dl(t)=t\ot t$ and $\ve(t)=1$.
	\end{corollary}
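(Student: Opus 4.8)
The plan is to obtain this corollary as an immediate formal consequence of the two lemmas that precede it, so that no genuinely new computation is required. First I would observe that Lemma~\ref{sg-auto-bialg} already does the substantive work: it produces an algebra automorphism $\sg$ of $T_q(n)$, determined on generators by \cref{sg(a_ij)=q^(2(i-j))a_ij}, and verifies that it is a bialgebra automorphism of $T_q(n)$ with respect to the bialgebra structure $(\mu,\eta,\Dl,\ve)$ built in the previous section (explicitly, $(\sg\ot\sg)\circ\Dl=\Dl\circ\sg$ and $\ve\circ\sg=\ve$, with $\sg$ bijective since $\sg\m(a_{ij})=q^{2(j-i)}a_{ij}$).

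Next I would invoke the lemma stated immediately before Lemma~\ref{sg-auto-bialg}, applied with $A=T_q(n)$: it asserts that, for any bialgebra $(A,\mu,\eta,\Dl,\ve)$ and any $\sg\in\Aut(A,\mu,\eta)$, the assignments $\Dl(t)=t\ot t$ and $\ve(t)=1$ extend the bialgebra structure to the skew polynomial algebra $A[t;\sg]$ precisely when $\sg$ is a bialgebra automorphism of $(A,\mu,\eta,\Dl,\ve)$. The ``if'' direction of that equivalence, together with the conclusion of Lemma~\ref{sg-auto-bialg}, yields exactly the bialgebra structure on $T_q(n)[t;\sg]$ claimed in the corollary. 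The one bookkeeping point to note is that the automorphism $\sg$ named in the corollary coincides with the one produced in Lemma~\ref{sg-auto-bialg}, which is clear because both are defined by the same formula on the generators $a_{ij}$ and these generate $T_q(n)$.

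There is essentially no obstacle here; the corollary is a direct concatenation of the two lemmas. If one preferred a self-contained argument, one could instead unwind the proof of the first of those lemmas and check by hand that the $\Dl$ and $\ve$ so defined are algebra morphisms $T_q(n)[t;\sg]\to T_q(n)[t;\sg]\ot T_q(n)[t;\sg]$ and $T_q(n)[t;\sg]\to K$, reducing exactly as there to the identities $(\sg\ot\sg)\circ\Dl=\Dl\circ\sg$ and $\ve\circ\sg=\ve$ already established in Lemma~\ref{sg-auto-bialg}; but this merely repeats the reasoning given above.
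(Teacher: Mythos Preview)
Your proposal is correct and matches the paper's approach exactly: the corollary is stated without proof in the paper, being an immediate combination of the two preceding lemmas (the general criterion for extending a bialgebra structure to $A[t;\sg]$, and the verification that this particular $\sg$ is a bialgebra automorphism). Your observation that the ``if'' direction of the first lemma together with Lemma~\ref{sg-auto-bialg} yields the result is precisely the intended deduction.
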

	
	\begin{remark}\label{det_b} 
		The element $t\dett=\dett t$ is central in $T_q(n)[t;\sg]$.
	\end{remark}

	\begin{lemma}
		The ideal $(t\dett-1)$ of $T_q(n)[t;\sg]$ is also a coideal of the coalgebra $T_q(n)[t;\sg]$. Thus, $T_q(n)[t;\sg]/(t\dett-1)$ is a bialgebra under the comultiplication and counit induced by those from $T_q(n)[t;\sg]$.
	\end{lemma}
	\begin{proof}
		We have 
		\begin{align*}
			\Dl(t\dett-1)&=(t\otimes t)\prod_{i=1}^n(a_{ii}\otimes a_{ii})-1\otimes 1=t\dett\otimes t\dett - 1\otimes 1\\&=t\dett\otimes (t\dett - 1)+(t\dett-1)\otimes 1,
		\end{align*}
		and $\ve(t\dett-1)=1-1=0$.
	\end{proof}

	\begin{proposition}\label{P:UTqn-as-localized-Tqn}
		There is a canonical bialgebra isomorphism $UT_q(n)\to T_q(n)[t;\sg]/(t\dett-1)$ sending $a_{ij}$ to the class of $a_{ij}$ in $T_q(n)[t;\sg]/(t\dett-1)$, for all $1\leq i\leq j\leq n$.
	\end{proposition}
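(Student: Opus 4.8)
The plan is to construct the isomorphism via the universal property of the Ore localization $T_q(n)[\dett^{-1}]$. First I would observe that in $UT_q(n)$ the class $\bar t$ of $t$ satisfies $\bar t\,\overline\dett = 1 = \overline\dett\,\bar t$ (using that $t\dett=\dett t$ is central in $T_q(n)[t;\sg]$, as noted in \cref{det_b}), so $\overline\dett$ is invertible in $UT_q(n)$ with inverse $\bar t$. Hence, by the universal property of the localization at the powers of $\dett$, the composite $T_q(n)\to T_q(n)[t;\sg]\to UT_q(n)$ extends uniquely to an algebra morphism $\Phi\colon T_q(n)[\dett^{-1}]\to UT_q(n)$ sending $a_{ij}$ to the class of $a_{ij}$ and $\dett^{-1}$ to $\bar t$.

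Next I would build the inverse. Since $T_q(n)[\dett^{-1}]$ contains $\dett^{-1}$, there is an algebra morphism $T_q(n)[t;\sg]\to T_q(n)[\dett^{-1}]$ extending the inclusion of $T_q(n)$ and sending $t\mapsto \dett^{-1}$; here one must check that $t$ acting by $\sg$ is compatible, i.e.\ that $\dett^{-1}b = \sg(b)\dett^{-1}$ in $T_q(n)[\dett^{-1}]$ for all $b\in T_q(n)$, which is exactly the relation $\dett\cdot b=\sg^{-1}(b)\cdot\dett$ of \cref{det_b} after multiplying by $\dett^{-1}$ on both sides. This morphism kills $t\dett-1$ since $\dett^{-1}\dett-1=0$, so it factors through $UT_q(n)$, yielding an algebra morphism $\Psi\colon UT_q(n)\to T_q(n)[\dett^{-1}]$. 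A quick check on generators shows $\Phi$ and $\Psi$ are mutually inverse: $\Psi\Phi$ fixes each $a_{ij}$ and sends $\dett^{-1}$ to $\dett^{-1}$, while $\Phi\Psi$ fixes each class $\bar a_{ij}$ and sends $\bar t$ to $\overline{\dett}^{\,-1}=\bar t$.

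Finally, to see that $\Phi$ is a bialgebra isomorphism, it suffices to check that it respects the comultiplication and counit on a generating set. On each $a_{ij}$ this is immediate since both structures are induced from $T_q(n)$; on $\dett^{-1}$, the bialgebra structure on $T_q(n)[\dett^{-1}]$ was defined precisely so that $\Dl(a_{ii}^{-1})=a_{ii}^{-1}\ot a_{ii}^{-1}$ and $\ve(a_{ii}^{-1})=1$, hence $\Dl(\dett^{-1})=\dett^{-1}\ot\dett^{-1}$ and $\ve(\dett^{-1})=1$, and on the other side $\bar t$ is group-like with $\ve(\bar t)=1$ by construction of the bialgebra structure on $T_q(n)[t;\sg]/(t\dett-1)$. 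Since an algebra isomorphism that is a coalgebra morphism is automatically a coalgebra isomorphism, $\Phi$ is a bialgebra isomorphism.

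I expect the main obstacle to be bookkeeping with the twist $\sg$: one must be careful that the Ore localization $T_q(n)[\dett^{-1}]$ is the \emph{two-sided} localization at the normal element $\dett$ (which it is, $\Sigma$ being an Ore set of normal elements as recalled before the statement), and that the relation moving $\dett^{-1}$ past elements of $T_q(n)$ matches the defining skew relation $tb=\sg(b)t$ of $T_q(n)[t;\sg]$ — sign and direction of the exponents in $\sg(a_{ij})=q^{2(i-j)}a_{ij}$ versus $\dett\cdot a_{ij}=q^{2(j-i)}a_{ij}\cdot\dett$ must be reconciled. Everything else is a routine application of universal properties.
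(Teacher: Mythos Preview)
Your proposal is correct and follows essentially the same approach as the paper: build one map via the universal property of localization (after noting $\overline{\dett}$ is invertible with inverse $\bar t$), build the inverse via the universal property of the Ore extension $T_q(n)[t;\sg]$ (after checking $\dett^{-1}b=\sg(b)\dett^{-1}$ from \cref{det_b}) and then pass to the quotient, and finally verify on generators that the two are mutually inverse bialgebra morphisms. The only cosmetic difference is that the paper names the maps $\Psi$ and $\Phi$ in the opposite roles to yours.
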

	\begin{proof}
		Although this is straightforward, we will provide the details for the convenience of the reader less familiar with noncommutative localization.  
		
		Let $\Psi:T_q(n)\to T_q(n)[t;\sg]/(t\dett-1)$ be the following composition
		\begin{center}
			\ \ \ \xymatrix{T_q(n) \ar[r]^-\iota & T_q(n)[t;\sg] \ar[r]^-{\pi} & \dfrac{T_q(n)[t;\sg]}{(t\dett-1)},}
		\end{center}
		where $\iota$ is the inclusion and $\pi$ is the canonical epimorphism. So $\Psi(a_{ij})=a_{ij}+I$, where $I=(t\dett-1)$. We have that $\Psi(\dett)=\dett+I$ and $(t+I)(\dett+I)=t\dett +I=1+I=(\dett+I)(t+I)$. Therefore, by the universal property of localization, $\Psi$ extends uniquely to a bialgebra homomorphism $UT_q(n)=T_q(n)[\dett^{-1}]\to T_q(n)[t;\sg]/I$, still denoted by $\Psi$.
		
		For the inverse map, consider the natural inclusion $\Phi: T_q(n)\to UT_q(n)$. For any $b\in T_q(n)$, since $\sg$ is the right conjugation by $\dett$, we have
		\begin{align*}
			\dett^{-1}\Phi(b)= \dett^{-1}b=\sigma(b)\dett^{-1}=\Phi(\sigma(b))\dett^{-1}.
		\end{align*}
		Thus, by the universal property of Ore extensions, $\Phi$ extends to $\Phi: T_q(n)[t;\sg]\to UT_q(n)$ so that $\Phi(t)=\dett^{-1}$. Moreover, $\Phi(t\dett-1)=\dett^{-1}\dett-1=0$, so $\Phi$ factors through a map $T_q(n)[t;\sg]/I\to UT_q(n)$, which is easily seen to be inverse to $\Psi$, yielding the result. 
	\end{proof}
	
	Our final goal is to give an explicit expression for the antipode $S$ on $UT_q(n)$. Define the following elements in $T_q(n)$:

	\begin{align}
		b_{ii}&=\prod_{k\ne i}a_{kk},\ 1\le i\le n,\label{b_ii=prod-a_kk-for-k-ne-i}\\
		b_{ij}&=\sum_{s=1}^{j-i}\sum_{i=i_0<\dots<i_s=j}(-1)^sq^{2(j-i)-s}a_{i_0i_1}\dots a_{i_{s-1}i_s} \prod_{k\not\in\{i_0,\dots,i_s\}}a_{kk},\ 1\le i<j\le n.\label{b_ij=sum-of-prods}
	\end{align}
	
	\begin{remark}\label{R:bij-rec-rel}
		It can be easily checked that the elements $(b_{ij})_{1\le i\le j\le n}$ satisfy the following recurrence relation for $i<j$:
		\begin{align}\label{E:bij-rec-rel}
			b_{ij}=-\sum_{k=i+1}^j q^{2(k-i)-1} a_{ik}b_{kj}a_{ii}^{-1}=-\sum_{k=i+1}^j q^{2(k-i)-1} a_{ik}a_{ii}^{-1}b_{kj},
		\end{align}
		where the terms $b_{kj}a_{ii}^{-1}$ and $a_{ii}^{-1}b_{kj}$ in \cref{E:bij-rec-rel} belong to $T_q(n)$ and are equal since, for $k>i$, $b_{kj}$ is a right and left multiple of $a_{ii}$, $T_q(n)$ is a domain and $a_{ii}$ commutes with $b_{kj}$.
	\end{remark}
	
	Recall the automorphism $\rho$ from \cref{P:auto-rho} and the automorphism $\sg$ from \cref{sg(a_ij)=q^(2(i-j))a_ij}.

	\begin{lemma}\label{L:bij-and-rho}
		We have, for all $1\leq i\leq j\leq n$, $\rho(b_{ij})=b_{n+1-j, n+1-i}$ and $\sigma(b_{ij})=q^{2(i-j)}b_{ij}$. 
	\end{lemma}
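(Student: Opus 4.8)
The plan is to prove the two statements separately, working directly from the explicit formulas \cref{b_ii=prod-a_kk-for-k-ne-i,b_ij=sum-of-prods}; both arguments are elementary, the only real content being careful bookkeeping. For $\sg$, I would put an $\NN$-grading on $T_q(n)$ by declaring $a_{ij}$ to be homogeneous of degree $j-i$. This is well defined because each defining relation \cref{a_jka_ik-is-qa_ika_jk,a_jka_jl-is-qa_jla_jk,a_ika_jl-is-a_jla_ik,a_jka_il-is-a^2a_ila_jk} is a combination of two monomials of equal degree (e.g. both monomials in \cref{a_jka_il-is-a^2a_ila_jk} have degree $(k-j)+(l-i)$); alternatively one can transport the natural grading through the isomorphism $T_q(n)\cong A_{\binom{n+1}2}(Q)$. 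By \cref{sg(a_ij)=q^(2(i-j))a_ij}, $\sg$ acts on the degree-$d$ homogeneous component as the scalar $q^{-2d}$. It then suffices to note that $b_{ii}$ is homogeneous of degree $0$, while in \cref{b_ij=sum-of-prods} every monomial $a_{i_0i_1}\cdots a_{i_{s-1}i_s}\prod_{k\notin\{i_0,\dots,i_s\}}a_{kk}$ has degree $\sum_{t=1}^s(i_t-i_{t-1})=j-i$; hence $b_{ij}$ is homogeneous of degree $j-i$ and $\sg(b_{ij})=q^{-2(j-i)}b_{ij}=q^{2(i-j)}b_{ij}$.

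For $\rho$, the case $i=j$ is immediate: $\rho$ permutes the pairwise-commuting elements $a_{kk}$, sending $a_{kk}$ to $a_{n+1-k,n+1-k}$, so $\rho(b_{ii})=\prod_{k\ne i}a_{n+1-k,n+1-k}=b_{n+1-i,n+1-i}$. For $i<j$ I would apply $\rho$ term by term to \cref{b_ij=sum-of-prods}. Given a chain $i=i_0<i_1<\cdots<i_s=j$, set $i'_t=n+1-i_{s-t}$, so that $n+1-j=i'_0<i'_1<\cdots<i'_s=n+1-i$; then $\rho(a_{i_{m-1}i_m})=a_{n+1-i_m,n+1-i_{m-1}}=a_{i'_{s-m}i'_{s-m+1}}$, and $k\mapsto n+1-k$ maps $[1,n]\setminus\{i_0,\dots,i_s\}$ bijectively onto $[1,n]\setminus\{i'_0,\dots,i'_s\}$. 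Because $\rho$ is an algebra automorphism (not an anti-automorphism), the images of the off-diagonal factors come out in the reversed order $a_{i'_{s-1}i'_s}a_{i'_{s-2}i'_{s-1}}\cdots a_{i'_0i'_1}$, so I would then use \cref{a_ika_jl-is-a_jla_ik} — applied along the increasing chain $i'_0<\cdots<i'_s$, which gives $a_{i'_{t-1}i'_t}a_{i'_{u-1}i'_u}=a_{i'_{u-1}i'_u}a_{i'_{t-1}i'_t}$ for $t\ne u$ — to restore the increasing order $a_{i'_0i'_1}a_{i'_1i'_2}\cdots a_{i'_{s-1}i'_s}$. Since $(n+1-i)-(n+1-j)=j-i$, the coefficient $(-1)^sq^{2(j-i)-s}$ is unchanged, and $(i_0,\dots,i_s)\mapsto(i'_0,\dots,i'_s)$ is a bijection from the chains from $i$ to $j$ onto the chains from $n+1-j$ to $n+1-i$; summing over $s$ and over all such chains, \cref{b_ij=sum-of-prods} gives $\rho(b_{ij})=b_{n+1-j,n+1-i}$.

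The only step that is not pure bookkeeping is this reordering of the off-diagonal factors after applying $\rho$, so that is where I would be most careful; once the pairwise commutativity of $a_{p_0p_1},\dots,a_{p_{s-1}p_s}$ along any strictly increasing chain $p_0<\cdots<p_s$ is recorded (a one-line check from \cref{a_ika_jl-is-a_jla_ik}, using $p_{t-1}<p_{u-1}$ and $p_t\le p_{u-1}<p_u$ for $t<u$), the rest is a routine comparison of the two sides. I would also remark that one could instead argue by induction on $j-i$ via the recurrence \cref{E:bij-rec-rel}, but that route is less clean because applying $\rho$ converts it into a ``bottom-right'' recurrence that one would then have to separately identify with the given ``top-left'' one; hence I prefer the direct computation above.
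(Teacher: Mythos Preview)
Your proof is correct and follows essentially the same approach as the paper: for $\rho$ you apply it term by term to \cref{b_ij=sum-of-prods} and use the pairwise commutativity of the off-diagonal factors (via \cref{a_ika_jl-is-a_jla_ik}) to reorder, exactly as the paper does after noting that ``the elements $a_{i_0i_1},\ldots,a_{i_{s-1}i_s}$ \ldots\ pairwise commute''; for $\sg$ your grading argument is a clean way to spell out what the paper dismisses as ``straightforward,'' and amounts to the same computation.
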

	\begin{proof}
		The result holds trivially in the case $i=j$. So, suppose that $i<j$. Notice first that the elements $a_{i_0i_1},\ldots , a_{i_{s-1}i_s}$ appearing in \cref{b_ij=sum-of-prods} pairwise commute, and the same holds for the elements of the form $a_{kk}$. Given $1\leq i\leq n$, set $\overline{i}=n+1-i$. Then we have 
		\begin{align*}
			\rho(b_{ij})&=\sum_{s=1}^{j-i}\sum_{i=i_0<\dots<i_s=j}(-1)^sq^{2(j-i)-s} a_{\overline{i_{s}}\, \overline{i_{s-1}}} \dots a_{\overline{i_1}\, \overline{i_0}}  \prod_{k\not\in\{i_0,\dots,i_s\}}a_{\overline{k}\, \overline{k}}\\
			&=\sum_{s=1}^{\overline i - \overline j}\sum_{\overline j=\overline{i_s}<\dots<\overline{i_0}=\overline{i}}(-1)^sq^{2(\overline i - \overline j)-s} a_{\overline{i_{s}}\, \overline{i_{s-1}}} \dots a_{\overline{i_1}\, \overline{i_0}}  \prod_{\overline{k}\not\in\{\overline{i_s},\dots,\overline{i_0}\}}a_{\overline{k}\, \overline{k}}\\
			&=b_{\overline j \, \overline i}.
		\end{align*}
		The result for $\sg$ is straightforward.
	\end{proof}
	
	The elements $(b_{ij})_{1\le i\le j\le n}$ are the main ingredient for expressing the antipode $S$ of $UT_q(n)$. This will be clearly perceived in the next result.

	\begin{lemma}\label{L:antipode-property}
		For all $1\le i\le j\le n$, we have the following relation in $T_q(n)$:
		\begin{align}\label{sum-b_ik.a_kj=det.dl_ij}
			\sum_{i\le k\le j}b_{ik}a_{kj}=\dett\dl_{ij}=\sum_{i\le k\le j}q^{2(k-j)}a_{ik}b_{kj}.
		\end{align}
	\end{lemma}
	\begin{proof}
		We proceed by induction on $j-i\geq 0$.
		The case $i=j$ is obvious. If $i<j$ then, using \cref{E:bij-rec-rel}, we get
		\begin{align*}
			\sum_{i\le k\le j}b_{ik}a_{kj}&= b_{ii}a_{ij} - \sum_{k=i+1}^j \sum_{l=i+1}^k q^{2(l-i)-1} a_{il}a_{ii}^{-1}b_{lk}a_{kj}\\
			&=b_{ii}a_{ij} - \sum_{l=i+1}^j q^{2(l-i)-1} a_{il}a_{ii}^{-1} \sum_{k=l}^j  b_{lk}a_{kj}\\
			&=b_{ii}a_{ij} - \sum_{l=i+1}^j q^{2(l-i)-1} a_{il}a_{ii}^{-1} \dett\delta_{lj}\\
			&=b_{ii}a_{ij} -  q^{2(j-i)-1} a_{ij}a_{ii}^{-1} \dett\\
			&=b_{ii}a_{ij} -  q^{2(j-i)-1} a_{ij}b_{ii}.
		\end{align*}
		To finish the calculation, notice that 
		\begin{align*}
			a_{ii} (b_{ii}a_{ij} -  q^{2(j-i)-1} a_{ij}b_{ii})
			&=\dett a_{ij} -q^{2(j-i)-1}a_{ii}a_{ij}b_{ii}\\&=q^{2(j-i)}a_{ij}\dett-q^{2(j-i)}a_{ij}a_{ii}b_{ii}=0.
		\end{align*}
		Since $T_q(n)$ is a domain, we conclude that $b_{ii}a_{ij} -  q^{2(j-i)-1} a_{ij}b_{ii}$=0.
		
		The other equality is proved similarly using the recurrence relation
		\begin{align}\label{E:bij-rec-rel-2}
			b_{ij}=-\sum_{l=i}^{j-1} q^{2(j-l)-1} a_{lj}a_{jj}^{-1}b_{il},
		\end{align}
		for $i<j$, which can be obtained from \cref{E:bij-rec-rel} applied to $b_{n+1-j, n+1-i}$ and then using the automorphism $\rho$. 
	\end{proof}

	Recall that a Hopf algebra is pointed if all of its simple left and right comodules have dimension one. Our main result in this section follows.
	
	\begin{theorem}\label{T:sec3:main:thm} 
		The Hopf algebra $UT_q(n)$ is pointed, and its antipode $S$ is given by 
		\begin{align}\label{S(a_ii)=prod+S(a_ij)=prod+S(t)=det}
			S(a_{ij}) =\dett^{-1}b_{ij}=q^{2(i-j)}b_{ij}\dett^{-1}, \text{ for all $1\le i\leq j\le n$,}
		\end{align}
		where $b_{ij}$ is as in~\cref{b_ii=prod-a_kk-for-k-ne-i,b_ij=sum-of-prods}. Moreover, $S^2=\id$.
	\end{theorem}
	\begin{proof}
		The proof that $UT_q(n)$ is pointed is just in~\cite[Proposition 3.1.1]{KO24}.
		The equality $\dett^{-1}b_{ij}=q^{2(i-j)}b_{ij}\dett^{-1}$ follows from~\cref{L:bij-and-rho}, and $S$ has order $2$ because the antipode $\overline S$ defined in~\cite{T90} for $M_{q^{-1}, q}[g^{-1}]$ has this property, and $S$ is induced on $UT_q(n)$ from $\overline S$. 
		
		We show that $S(a_{ij}) =\dett^{-1}b_{ij}$ by induction on $j-i$. For the base case $i=j$ we have, on the one hand, $1=\ve(a_{ii})=S(a_{ii})a_{ii}$, because $S$ is an antipode. On the other hand, by~\cref{L:antipode-property}, we have $\dett^{-1}b_{ii}a_{ii}=\dl_{ii}=1$. Comparing both expressions and canceling $a_{ii}$ on the right, we get $S(a_{ii})=\dett^{-1}b_{ii}$.
		
		Now assume that $j-i>0$ and $S(a_{rs}) =\dett^{-1}b_{rs}$ for all $r\le s$ with $s-r<j-i$. Using again the fact that $S$ is an antipode and the induction hypothesis, we have
		\begin{align*}
			0&=\ve(a_{ij})=\sum_{i\le k\le j}S(a_{ik})a_{kj}=\sum_{i\le k<j}\dett^{-1}b_{ik}a_{kj}+S(a_{ij})a_{jj}.
		\end{align*}
		Now using the identity in \cref{L:antipode-property} and multiplying on the left by $\dett^{-1}$ we obtain
		\begin{align*}
			0=\dl_{ij}=\sum_{i\le k\le j}\dett^{-1}b_{ik}a_{kj}=\sum_{i\le k< j}\dett^{-1}b_{ik}a_{kj} + \dett^{-1}b_{ij}a_{jj}.
		\end{align*}
		Comparing these two expressions and canceling out $a_{jj}$ on the right we deduce that $S(a_{ij})=\dett^{-1}b_{ij}$, which proves the induction step. 
	\end{proof}

	\begin{proposition}\label{autos}
		The automorphisms $\rho, \sg\in\Aut(T_q(n))$ from \cref{P:auto-rho} and \cref{sg-auto-bialg}, respectively, lift uniquely to algebra automorphisms of $UT_q(n)$. These lifts satisfy the following:
		\begin{enumerate}
			\item $\sg$ and $\rho$ commute with the antipode;
			\item $\sg$ is a Hopf algebra automorphism of $UT_q(n)$; 
			\item 
			\label{rho-antiauto-of-coalgebras} $\rho$ is a coalgebra antiautomorphism of $UT_q(n)$.
		\end{enumerate} 
	\end{proposition}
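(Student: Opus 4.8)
The plan is to work throughout with the identification $UT_q(n)=T_q(n)[\dett^{-1}]$ of \cref{P:UTqn-as-localized-Tqn,R:UTqn-as-localized-Tqn}, to obtain the lifts via the universal property of Ore localization, and then to check the relevant (co)algebra identities on the algebra generators $a_{ij}$ ($1\le i\le j\le n$) and $a_{ii}^{-1}$ ($1\le i\le n$). First, since $a_{11},\dots,a_{nn}$ pairwise commute in $T_q(n)$ (a special case of \cref{a_ika_jl-is-a_jla_ik}), $\rho$ merely permutes the factors of $\dett=\prod_i a_{ii}$ and $\sg$ fixes each of them, so $\rho(\dett)=\dett=\sg(\dett)$. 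Hence composing $\rho$ (resp.\ $\sg$) with the inclusion $T_q(n)\hookrightarrow UT_q(n)$ sends $\dett$ to an invertible element, and the universal property of localization provides a unique algebra endomorphism of $UT_q(n)$ extending it; applying the same to $\rho^{-1}=\rho$ and to $\sg^{-1}$ shows these extensions are bijective, and any lift must send $\dett^{-1}$ to $\dett^{-1}$, so the lifts are unique. From now on $\rho,\sg$ will denote these lifts.

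For (1), recall from \cref{P:UTqn-as-localized-Tqn} that $\Dl$ and $\ve$ on $UT_q(n)$ are the unique extensions of those of $T_q(n)$ making the $a_{ii}^{-1}$ grouplike. Since $\sg$ is a bialgebra automorphism of $T_q(n)$ by \cref{sg-auto-bialg} and $\sg(a_{ii}^{-1})=a_{ii}^{-1}$, one checks $(\sg\ot\sg)\circ\Dl=\Dl\circ\sg$ and $\ve\circ\sg=\ve$ on the generators — which is enough, as all the maps involved are algebra morphisms — so $\sg$ is a bialgebra automorphism of $UT_q(n)$. I would then finish by the standard fact that any bialgebra endomorphism of a Hopf algebra commutes with the antipode, whence $\sg\circ S=S\circ\sg$ and $\sg$ is a Hopf algebra automorphism; alternatively one verifies $\sg(S(a_{ij}))=\sg(t)\sg(b_{ij})=q^{2(i-j)}\,t\,b_{ij}=q^{2(i-j)}S(a_{ij})=S(\sg(a_{ij}))$ directly, using $\sg(t)=t$ and \cref{L:bij-and-rho}.

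For (2), to see that $\rho$ is a coalgebra antiautomorphism it suffices to check $\Dl\circ\rho=\tau\circ(\rho\ot\rho)\circ\Dl$ (with $\tau$ the flip of $UT_q(n)\ot UT_q(n)$) and $\ve\circ\rho=\ve$ on the generators, since $\Dl\circ\rho$ and $\tau\circ(\rho\ot\rho)\circ\Dl$ are both algebra morphisms ($\tau$ being an algebra automorphism of $UT_q(n)\ot UT_q(n)$ and $\rho$ one of $UT_q(n)$). On $a_{ij}$ the identity reduces, via the substitution $k\mapsto n+1-k$, to the obvious matching of $\Dl(a_{ij})=\sum_{i\le k\le j}a_{ik}\ot a_{kj}$ with the flip of $\Dl(a_{n+1-j,n+1-i})$; on $a_{ii}^{-1}$ it is immediate since $a_{n+1-i,n+1-i}^{-1}$ is grouplike, and $\ve\circ\rho=\ve$ is clear. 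For the commutation with $S$, I would compute on generators: from $S(a_{ij})=tb_{ij}$, $\rho(t)=\rho(\dett)^{-1}=t$, and $\rho(b_{ij})=b_{n+1-j,n+1-i}$ (\cref{L:bij-and-rho}), one gets $\rho(S(a_{ij}))=t\,b_{n+1-j,n+1-i}=S(a_{n+1-j,n+1-i})=S(\rho(a_{ij}))$ for all $i\le j$; since $\rho\circ S$ and $S\circ\rho$ are both algebra antiautomorphisms of $UT_q(n)$, agreement on the $a_{ij}$ forces $\rho\circ S=S\circ\rho$. (Equivalently, $\rho$ realizes $UT_q(n)\to UT_q(n)^{\mathrm{cop}}$ as a Hopf algebra isomorphism, whose antipode is $S^{-1}=S$ by \cref{S^2-is-id}, and Hopf algebra morphisms commute with antipodes.)

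None of these steps is deep; the whole argument is bookkeeping. The one place that calls for care is the reindexing $k\mapsto n+1-k$ in the coproduct check for $\rho$, combined with the remark that the two maps being compared there are algebra morphisms, so that it suffices to test them on generators. Everything else reduces to the facts $\rho(\dett)=\sg(\dett)=\dett$ and the behaviour of $\rho$ and $\sg$ on the $b_{ij}$ already recorded in \cref{L:bij-and-rho}.
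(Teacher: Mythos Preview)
Your proposal is correct and follows essentially the same route as the paper: both use the identification $UT_q(n)=T_q(n)[\dett^{-1}]$, the fact that $\rho(\dett)=\sg(\dett)=\dett$ to lift via the universal property of localization, and then verify the (co)algebra identities and commutation with $S$ on generators, invoking \cref{L:bij-and-rho} for $\rho(b_{ij})$. Your write-up is slightly more explicit about why checking on generators suffices (both compared maps being algebra morphisms, resp.\ antiautomorphisms), but the underlying argument is the same.
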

	\begin{proof}
		Since $\sigma(\dett)=\dett=\rho(\dett)$ (where the last equality follows since the elements $a_{ii}$ mutually commute), we conclude that $\sg$ and $\rho$ extend to algebra automorphisms of $UT_q(n)$, with $\sigma(\dett^{-1})=\dett^{-1}$ and $\rho(\dett^{-1})=\dett^{-1}$.
		
		As observed in the proof of \cref{sg-auto-bialg}, $\sg$ is just right conjugation by $\dett$, hence it's a Hopf algebra automorphism of $UT_q(n)$ and it commutes with $S$ because $S(\dett)=\dett^{-1}$.
		
		Given $1\le i\le n$, denote $\overline{i}=n+1-i$. Let $\tau$ be the flip map on $UT_q(n)\ot UT_q(n)$. Then, for all $1\leq i\leq j\leq n$,
		\begin{align*}
			(\rho\otimes\rho)(\Dl(a_{ij}))&=\sum_{i\le k\le j}\rho(a_{ik})\otimes \rho(a_{kj})
			=\sum_{i\le k\le j}a_{\overline k\,\overline i}\otimes a_{\overline j\,\overline k}=\sum_{\overline j\leq\overline k\leq\overline i}a_{\overline k\,\overline i}\otimes a_{\overline j\,\overline k}
			=(\tau\circ\Dl)(a_{\overline j\,\overline i})=(\tau\circ\Dl)(\rho(a_{ij})),\\
			\ve(\rho(a_{ij}))&=\ve(a_{\overline j\, \overline i})=\delta_{\overline j\, \overline i}=\delta_{ij}=\ve(a_{ij}).
		\end{align*}
		Moreover, using \cref{L:bij-and-rho}, we have
		\begin{align*}
			\rho(S(a_{ij}))&=\rho(\dett^{-1}b_{ij})=\dett^{-1}b_{\overline j\,\overline i}=S(a_{\overline j\,\overline i})=S(\rho(a_{ij})).
		\end{align*}
	\end{proof}
	
	\begin{remark}
		Let $X\subseteq \{1, \ldots, n\}$ and consider the unital associative subalgebra of $UT_q(n)$ generated by $\{a_{ii}^{\pm 1} : i\in X\}$. This is just the commutative Laurent polynomial algebra in $k:=|X|$ variables. As $\Dl(a_{ii}^{\pm 1})=a_{ii}^{\pm 1}\ot a_{ii}^{\pm 1}$
		and $S(a_{ii}^{\pm 1})=a_{ii}^{\mp 1}$, this is a Hopf subalgebra. In fact, it is isomorphic to the Hopf group algebra of the free abelian group $\ZZ^k$.
		
		It is straightforward to verify that $(\dett-1)$ is a Hopf ideal of $UT_q(n)$, so we get a quotient Hopf algebra $UT_q(n)/(\dett-1)$. Since
		\begin{align*}
			(q^{2(i-j)}-1)a_{ij}&=(q^{2(i-j)}\dett-1)a_{ij}-q^{2(i-j)}(\dett-1)a_{ij}\\&=a_{ij}(\dett-1)-q^{2(i-j)}(\dett-1)a_{ij}\in (\dett-1),
		\end{align*}
		in case $q$ is not a root of unity, the Hopf algebra $UT_q(n)/(\dett-1)$ is canonically isomorphic to the Hopf subalgebra described above for $X=\{1, \ldots, n-1\}$, isomorphic to the Hopf group algebra of $\ZZ^{n-1}$.
	\end{remark}

	\section{Hopf $\ast$-algebras}
	
	For an involution $\ast$ on a $K$-algebra $A$, we denote $\ast(a)$ by $a^{\ast}$ for each $a\in A$. Recall that a map $\ast: A\to A$ is an \emph{involution} if $(a+b)^\ast=a^\ast+b^\ast$, $(ab)^\ast=b^\ast a^\ast$
	and $(a^\ast)^\ast=a$ for all $a,b\in A$.
	
	Consider the field $K$ of characteristic different from $2$ as a $K$-algebra. Suppose there is an involution $\overline{\phantom{c}} : K\to K$ such that $\overline{\phantom{c}} \neq \id_K$ and let
	$$K_0=\{\alpha\in K : \overline \alpha=\alpha\}.$$
	Since $\,\overline{\phantom{c}}$\, is an automorphism of order $2$, by \cite[Lemma~2.5]{QS}, $K_0$ is a proper subfield of $K$ such that $[K : K_0]=2$ and there is a linear basis $\{1,i\}$ of $K$ over $K_0$ such that $i^2\in K_0$ and $\overline i=-i$.
	
	For any $K$-vector space $V$, a map $\phi:V\to V$ is \emph{antilinear} if $\phi(u+v)=\phi(u)+\phi(v)$ and $\phi(\alpha u)=\overline\alpha \phi(u)$ for all $u,v\in V$ and $\alpha\in K$. Clearly, every antilinear map is $K_0$-linear. An antilinear map between two $K$-algebras $\phi:A\to B$ will be called an \textit{antilinear morphism} if $\phi(ab)=\phi(a)\phi(b)$ for all $a,b\in A$ and $\phi(1)=1$.
	
	\begin{definition}
		Let $(H, \mu, \eta, \Dl,\ve, S)$ be a Hopf $K$-algebra. We say that $H$ is a \emph{Hopf $\ast$-algebra} if there exists an antilinear involution $\ast$ on $H$ satisfying the following conditions:
		\begin{enumerate}
			\item $\ast$ is a morphism of $K_0$-coalgebras;
			\item $(\ast\circ S)^2=\id_H$.
		\end{enumerate} 
	\end{definition}
	
	\begin{remark}
		If $H$ is a Hopf $K$-algebra with an antilinear involution $\ast$, then $\ast$ is an antimorphism of $K_0$-algebras.  
	\end{remark}
	
	\begin{lemma}\label{*-from-gamma}
		A Hopf algebra $H$ has a  Hopf $\ast$-algebra structure if and only if there exists an antilinear bijection $\gamma$ on $H$ such that
		\begin{enumerate}
			\item $\gamma$ is a morphism of $K_0$-algebras and an antimorphism of $K_0$-coalgebras; 
			\item $\gamma^2=(S\circ \gamma)^2=\id_H$.
		\end{enumerate} 
	\end{lemma}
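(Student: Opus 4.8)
The plan is to prove the equivalence by exhibiting an explicit, mutually inverse pair of constructions: from a Hopf $\ast$-algebra structure $\ast$ on $H$ I produce $\gamma:=\ast\circ S$, and from a $\gamma$ as in the statement I produce $\ast:=S\circ\gamma$. The whole argument rests on two standard properties of the antipode of an arbitrary Hopf algebra, which I will invoke freely: $S$ is an antihomomorphism of $K$-algebras with $S(1)=1$, and $S$ is an antihomomorphism of $K$-coalgebras, i.e.\ $\Delta\circ S=\tau\circ(S\otimes S)\circ\Delta$ and $\varepsilon\circ S=\varepsilon$, where $\tau$ denotes the flip map; I will also use the elementary identities $(f\otimes g)\circ\tau=\tau\circ(g\otimes f)$ and $\tau\circ(S\otimes S)\circ\tau=S\otimes S$. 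The key observation is purely formal: composing two antilinear maps yields a linear one, composing two antihomomorphisms of algebras yields a homomorphism, and composing an antimorphism of coalgebras with a morphism of coalgebras (in either order) yields an antimorphism. Since, being an antilinear involution, $\ast$ is in particular an antihomomorphism of $K_0$-algebras and (by hypothesis) a morphism of $K_0$-coalgebras, it follows immediately that $\gamma=\ast\circ S$ is antilinear, a homomorphism of $K_0$-algebras, and an antimorphism of $K_0$-coalgebras, i.e.\ it satisfies (1); symmetrically, $\ast=S\circ\gamma$ comes out antilinear, an antihomomorphism of $K_0$-algebras, and a morphism of $K_0$-coalgebras.

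For the two squaring identities I would argue as follows. In the forward direction, $\gamma^2=(\ast\circ S)^2=\id_H$ is precisely condition (2) of a Hopf $\ast$-algebra, and this already forces $\gamma$ to be a bijection. Composing $(\ast\circ S)^2=\id_H$ on the left with the involution $\ast$ gives the one-line identity $S\circ\ast\circ S=\ast$, whence $S\circ\gamma=S\circ\ast\circ S=\ast$ and therefore $(S\circ\gamma)^2=\ast^2=\id_H$; this establishes (2) for $\gamma$. In the converse direction, $\ast^2=(S\circ\gamma)^2=\id_H$ is exactly the requirement that $\ast$ be an involution, once one checks $(ab)^\ast=b^\ast a^\ast$ (immediate from $S$ being an antihomomorphism and $\gamma$ a homomorphism) and $1^\ast=S(\gamma(1))=S(1)=1$; again this makes $\ast$ bijective. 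From $(S\circ\gamma)^2=\id_H$ together with $\gamma^2=\id_H$, composing on the right with $\gamma$ gives $S\circ\gamma\circ S=\gamma$, so $\ast\circ S=S\circ\gamma\circ S=\gamma$ and $(\ast\circ S)^2=\gamma^2=\id_H$, which is condition (2) of a Hopf $\ast$-algebra. Finally, the two passages are mutually inverse, since the $\ast$ recovered from $\gamma=\ast\circ S$ is $S\circ\gamma=\ast$, and the $\gamma$ recovered from $\ast=S\circ\gamma$ is $\ast\circ S=\gamma$, as just computed.

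The only part that involves any computation at all is the coalgebra compatibility, and it is short: in the forward direction one writes
\[
\Delta\circ\gamma=\Delta\circ\ast\circ S=(\ast\otimes\ast)\circ\Delta\circ S=(\ast\otimes\ast)\circ\tau\circ(S\otimes S)\circ\Delta=\tau\circ(\gamma\otimes\gamma)\circ\Delta,
\]
using successively that $\ast$ is a coalgebra morphism, that $S$ is a coalgebra antimorphism, and the flip identity, together with $\varepsilon\circ\gamma=\varepsilon\circ\ast\circ S=\varepsilon\circ S=\varepsilon$; the computation for $\ast=S\circ\gamma$ is its mirror image, using in addition $\tau\circ(S\otimes S)\circ\tau=S\otimes S$. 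I do not anticipate any real obstacle here: the entire content is the bookkeeping of morphism-versus-antimorphism and antilinear-versus-linear, plus the two one-line identities $S\circ\ast\circ S=\ast$ and $S\circ\gamma\circ S=\gamma$ extracted from the squaring hypotheses; everything else is formal manipulation.
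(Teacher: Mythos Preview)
Your proof is correct and follows exactly the standard argument: the paper does not give a proof of its own but simply refers to \cite[Lemma~IV.8.2]{Kassel}, and your construction ($\gamma=\ast\circ S$ in one direction, $\ast=S\circ\gamma$ in the other, together with the one-line identities $S\circ\ast\circ S=\ast$ and $S\circ\gamma\circ S=\gamma$) is precisely that argument spelled out in full.
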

	\begin{proof}
		The proof is similar to that of \cite[Lemma~IV.8.2]{Kassel}.
	\end{proof}
	
	Let $V$ be a $K$-vector space. Then the \textit{conjugate} of $V$ is the $K$-space $\ol V$ which coincides with $(V,+)$ as an abelian group, but is equipped with the following multiplication by scalars: $(\af,v)\mapsto\ol\af v$ for all $\af\in K$ and $v\in V$. Observe that antilinear maps $V\to W$ are exactly the same as linear maps $V\to \ol W$. If $(A,\cdot)$ is a $K$-algebra, then $(\ol A,\cdot)$ is also a $K$-algebra. As a consequence, for any $K$-algebra $R$ there is a one-to-one correspondence between antilinear morphisms $T_q(n)\to R$ and $K$-algebra morphisms $T_q(n)\to \ol R$.

	\begin{proposition}\label{gm-existence}
		Assume that $q\in K_0$. Then there is an antilinear bijection $\gm:UT_q(n)\to UT_q(n)$ which is a morphism of $K_0$-algebras and an antimorphism of $K_0$-coalgebras, such that 
		\begin{align}\label{gm(a_ij)=a_(n+1-j_n+1-i)}
			\gm(a_{ij})=a_{n+1-j,n+1-i}.
		\end{align} 
		Moreover, $\gm^2=\id$.
	\end{proposition}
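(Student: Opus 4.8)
The plan is to build $\gm$ first on $T_q(n)$ from the dictionary between antilinear morphisms and conjugate points recorded in the remark immediately above, then to extend it along the localization $UT_q(n)=T_q(n)[\dett^{-1}]$, and finally to deduce the coalgebra-antimorphism property from the corresponding computation for the map $\rho$ of \cref{P:auto-rho,autos}. Since $q\in K_0$ we have $\ol q=q$, so every coefficient appearing in the defining relations \cref{a_jka_ik-is-qa_ika_jk,a_jka_jl-is-qa_jla_jk,a_ika_jl-is-a_jla_ik,a_jka_il-is-a^2a_ila_jk} of $T_q(n)$ is fixed by the field involution. The tuple $(a_{n+1-j,n+1-i})_{1\le i\le j\le n}$ is a $T_q(n)$-point of $T_q(n)$, being the image of the canonical generators under the algebra automorphism $\rho$ of \cref{P:auto-rho}, hence also a $UT_q(n)$-point of $T_q(n)$. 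Passing from $UT_q(n)$ to the conjugate algebra $\ol{UT_q(n)}$ changes only the scalar multiplication, and since the relations above have coefficients in $K_0$, the same tuple is an $\ol{UT_q(n)}$-point of $T_q(n)$. By the remark preceding the statement it therefore corresponds to a unique antilinear morphism $\gm_0\colon T_q(n)\to UT_q(n)$ with $\gm_0(a_{ij})=a_{n+1-j,n+1-i}$; in particular $\gm_0$ is a morphism of $K_0$-algebras.

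Next I would extend $\gm_0$ to $UT_q(n)$. By \cref{P:UTqn-as-localized-Tqn,R:UTqn-as-localized-Tqn}, $UT_q(n)$ is the localization of $T_q(n)$ at the multiplicative set $\Sg$ generated by $a_{11},\dots,a_{nn}$, and $\gm_0$ maps $\Sg$ into itself, hence into the group of units of $UT_q(n)$. By the universal property of the Ore localization, $\gm_0$ extends uniquely to a ring morphism $\gm\colon UT_q(n)\to UT_q(n)$; using centrality of scalars and multiplicativity of the field involution, a routine check shows $\gm$ is again antilinear, so it is a morphism of $K_0$-algebras satisfying \cref{gm(a_ij)=a_(n+1-j_n+1-i)}. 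Now $\gm^2$ is a composite of two antilinear maps, hence a $K$-algebra endomorphism of $UT_q(n)$; it fixes every $a_{ij}$ because $\gm^2(a_{ij})=\gm(a_{n+1-j,n+1-i})=a_{ij}$, and it fixes $\dett^{-1}$. Since $\{a_{ij}:1\le i\le j\le n\}\cup\{\dett^{-1}\}$ generates $UT_q(n)$ as a $K$-algebra, $\gm^2=\id$; in particular $\gm$ is bijective.

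It remains to check that $\gm$ is an antimorphism of $K_0$-coalgebras, i.e.\ that $\Dl\circ\gm=\tau\circ(\gm\ot\gm)\circ\Dl$ and $\ve\circ\gm=\,\ol{\phantom{c}}\,\circ\,\ve$, where $\tau$ is the flip. On the algebra generators $a_{ij}$ and $\dett^{-1}$, $\gm$ has exactly the same effect as the coalgebra antiautomorphism $\rho$ of \cref{autos}, so both identities on the generators follow verbatim from the computation carried out in the proof of \cref{autos}; as the two sides of each identity are (anti)multiplicative, the identities then hold on all of $UT_q(n)$. The only new feature compared with $\rho$ is that $\gm$ is antilinear rather than $K$-linear, which causes no trouble because the structure constants of $\Dl$ and $\ve$ on the generators lie in $\{0,1\}\sst K_0$. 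I expect the sole delicate point to be correctly matching this antilinearity with the placement of the two tensor factors in $\Dl\circ\gm=\tau\circ(\gm\ot\gm)\circ\Dl$; everything else is a direct transcription of facts already established.
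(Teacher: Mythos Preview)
Your proof is correct and follows essentially the same route as the paper's: construct the antilinear map on $T_q(n)$ via the correspondence with $\ol{UT_q(n)}$-points (using $q\in K_0$), extend through the localization $UT_q(n)=T_q(n)[\dett^{-1}]$, and then import both the coalgebra-antimorphism computation and the $\gm^2=\id$ check from the argument already carried out for $\rho$ in \cref{autos}. The only differences are cosmetic: you are more explicit about invoking the universal property of Ore localization and you prove $\gm^2=\id$ before the coalgebra step, whereas the paper simply cites the proof of \cref{autos}\cref{rho-antiauto-of-coalgebras} and records $\gm^2=\id$ last.
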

	\begin{proof}
		As in \cref{P:auto-rho}, since $q\in K_0$, it follows from the observations above that there exists a unique antilinear morphism $\gm:T_q(n)\to UT_q(n)$ mapping $a_{ij}$ to $a_{\overline j\,\overline i}$, where $\overline{i}=n+1-i$. The same argument used in the proof of \cref{autos}\cref{rho-antiauto-of-coalgebras} shows that $\gm$ extends to an antilinear automorphism of the $K$-algebra $UT_q(n)$, which we still denote by $\gm$, such that $\gm(t)=t$. 
		The proof that it is also an antimorphism of $K_0$-coalgebras is the same as the one for $\rho$ in \cref{autos}\cref{rho-antiauto-of-coalgebras}.
		Since $\gm^2(a_{ij})=a_{ij}$ and $\gm^2$ is $K$-linear, then $\gm^2=\id$.
	\end{proof}
	
	\begin{lemma}\label{gm-comm-with-S}
		Assume that $q\in K_0$. Then $\gm\circ S=S\circ\gm$.
	\end{lemma}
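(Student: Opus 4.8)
The plan is to verify the identity $\gm\circ S=S\circ\gm$ on the algebra generators $a_{ij}$ and $t$ of $UT_q(n)$ and then argue that it extends. The key point to keep in mind is that $\gm$ is a $K_0$-algebra morphism (it is multiplicative) whereas $S$ is an algebra antihomomorphism; hence both $\gm\circ S$ and $S\circ\gm$ are antilinear algebra antihomomorphisms of $UT_q(n)$, so it suffices to check that they agree on a generating set. Since $UT_q(n)$ is generated as an algebra by the $a_{ij}$ together with $t$ (equivalently, by the $a_{ij}$ and $\dett^{-1}$, using \cref{P:UTqn-as-localized-Tqn}), checking agreement on these generators will suffice.

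**Main computation.** First I would treat the generators $a_{ij}$. Using the explicit antipode formula \cref{S(a_ii)=prod+S(a_ij)=prod+S(t)=det}, $S(a_{ij})=tb_{ij}$, and the fact (from the proof of \cref{gm-existence}) that $\gm(t)=t$ together with $\gm(a_{ij})=a_{\overline j\,\overline i}$ where $\overline i=n+1-i$, one computes
\begin{align*}
	\gm(S(a_{ij}))&=\gm(tb_{ij})=\gm(b_{ij})\,\gm(t)=\gm(b_{ij})\,t.
\end{align*}
Here I need to know that $\gm(b_{ij})=b_{\overline j\,\overline i}$. Since $q\in K_0$ by hypothesis, the defining formulas \cref{b_ii=prod-a_kk-for-k-ne-i,b_ij=sum-of-prods} for $b_{ij}$ involve only the scalars $q^{2(j-i)-s}$, which are fixed by $\,\overline{\phantom{c}}\,$, and monomials in the $a_{kl}$; applying the antilinear multiplicative map $\gm$ therefore reproduces exactly the computation carried out for $\rho$ in \cref{L:bij-and-rho}, giving $\gm(b_{ij})=b_{\overline j\,\overline i}$. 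Hence $\gm(S(a_{ij}))=b_{\overline j\,\overline i}\,t$. On the other hand, using $S(a_{ij})=\sg(b_{ij})t$ and $S(t)=\dett$, and that $S$ is an antihomomorphism,
\begin{align*}
	S(\gm(a_{ij}))&=S(a_{\overline j\,\overline i})=\sg(b_{\overline j\,\overline i})\,t.
\end{align*}
So the two expressions differ only in whether $b_{\overline j\,\overline i}$ or $\sg(b_{\overline j\,\overline i})$ appears, multiplied on the right by $t$. But by \cref{L:bij-and-rho}, $\sg(b_{kl})=q^{2(k-l)}b_{kl}$, and since $t\dett$ is central in $UT_q(n)$ one has, for any $b\in T_q(n)$ that is $\sg$-homogeneous, $\sg(b)\,t = t\,b$ — equivalently $tb_{kl}=\sg(b_{kl})t$, which is precisely the identity already recorded in \cref{S(a_ii)=prod+S(a_ij)=prod+S(t)=det}. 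Applying this with $(k,l)=(\overline j,\overline i)$ gives $b_{\overline j\,\overline i}\,t=\sg(b_{\overline j\,\overline i})\,t$ is false in general, so more care is needed: one should instead write $\gm(S(a_{ij}))=\gm(tb_{ij})=\gm(\sg(b_{ij})t)$ using the \emph{second} expression $S(a_{ij})=\sg(b_{ij})t$, and note $\gm(\sg(b_{ij}))=\sg(\gm(b_{ij}))=\sg(b_{\overline j\,\overline i})$ since $\sg$ and $\gm$ act on the $a_{kl}$ by commuting scalar rescalings (both diagonal in the monomial basis). This yields $\gm(S(a_{ij}))=\sg(b_{\overline j\,\overline i})\,t=S(\gm(a_{ij}))$, as required.

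**The generator $t$, and conclusion.** It remains to check the identity on $t$: $\gm(S(t))=\gm(\dett)=\dett$, since $\gm$ fixes each $a_{ii}$ (because $\overline{\imath}=\overline{\imath}$ only when... no: $\gm(a_{ii})=a_{\overline i\,\overline i}$, so $\gm$ permutes the diagonal generators, and $\gm(\dett)=\gm(\prod_i a_{ii})=\prod_i a_{\overline i\,\overline i}=\dett$ as the $a_{ii}$ mutually commute). On the other side, $S(\gm(t))=S(t)=\dett$, using $\gm(t)=t$. So both sides equal $\dett$. Since $\gm\circ S$ and $S\circ\gm$ are both antilinear algebra antihomomorphisms of $UT_q(n)$ agreeing on the generators $\{a_{ij}\}\cup\{t\}$, they coincide, proving $\gm\circ S=S\circ\gm$. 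The main obstacle is the bookkeeping around the two equivalent forms of $S(a_{ij})$ and making sure the $\sg$-twist is tracked correctly through $\gm$; invoking \cref{L:bij-and-rho} to identify $\gm(b_{ij})$ up to the automorphism $\sg$ is what makes this clean, and the hypothesis $q\in K_0$ is exactly what guarantees the antilinear map $\gm$ does not disturb the structure constants.
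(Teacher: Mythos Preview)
Your overall strategy --- reduce to generators and invoke the analogue of \cref{L:bij-and-rho} for $\gm$ --- is exactly the paper's approach. However, there is a genuine error in your first displayed computation: you write $\gm(tb_{ij})=\gm(b_{ij})\,\gm(t)$, but $\gm$ is a $K_0$-algebra \emph{morphism} (multiplicative), not an antimorphism, so the correct identity is $\gm(tb_{ij})=\gm(t)\,\gm(b_{ij})=t\,b_{\overline j\,\overline i}$. With this correction the argument is immediate:
\[
\gm(S(a_{ij}))=\gm(tb_{ij})=t\,\gm(b_{ij})=t\,b_{\overline j\,\overline i}=S(a_{\overline j\,\overline i})=S(\gm(a_{ij})),
\]
precisely as in the paper's computation for $\rho$ in \cref{autos}\cref{rho-antiauto-of-coalgebras}. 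Your ``more care is needed'' detour through the second form $S(a_{ij})=\sg(b_{ij})t$ and the commutation of $\gm$ with $\sg$ is correct as a fix (indeed $\gm\sg=\sg\gm$ on generators since $q\in K_0$ and $\overline j-\overline i=i-j$), but it patches a problem that isn't there once the multiplicativity of $\gm$ is used correctly; also, your stated justification (``both diagonal in the monomial basis'') is inaccurate, since $\gm$ permutes the generators rather than rescaling them.

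Finally, the separate check on $t$ is harmless but unnecessary: since $t=\dett^{-1}$ in $UT_q(n)$ and both $\gm\circ S$ and $S\circ\gm$ are antilinear algebra antihomomorphisms, agreement on the $a_{ij}$ forces agreement on $\dett$ and hence on $t$. This is why the paper only checks on the $a_{ij}$.
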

	\begin{proof}
		Since both $\gm\circ S$ and $S\circ\gm$ are antilinear, it suffices to show that $(\gm\circ S)(a_{ij})=(S\circ\gm)(a_{ij})$, for all $1\leq i\leq j\leq n$. This follows exactly as for $\rho$ in \cref{autos}\cref{rho-antiauto-of-coalgebras} (based on the computation in \cref{L:bij-and-rho} which holds for $\gm$ because $q\in K_0$), showing that $\rho$ and $S$ commute.
	\end{proof}
	
	\begin{theorem}\label{Hopf-*-str-on-UT_q(n)}
		If $q\in K_0$, then there exists a Hopf $\ast$-algebra structure on $UT_q(n)$ given by $a_{ij}^\ast=(\gm\circ S)(a_{ij})$.
	\end{theorem}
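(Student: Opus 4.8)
The plan is to obtain the result directly from \cref{*-from-gamma}, applied to the antilinear bijection $\gm$ produced in \cref{gm-existence}. By \cref{gm-existence}, the map $\gm\colon UT_q(n)\to UT_q(n)$ is antilinear, a morphism of $K_0$-algebras, an antimorphism of $K_0$-coalgebras, and satisfies $\gm^2=\id$. Thus $\gm$ already meets every requirement of \cref{*-from-gamma} except possibly the identity $(S\circ\gm)^2=\id$, so the entire proof reduces to verifying that one relation.

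For this I would combine \cref{gm-comm-with-S}, which gives $\gm\circ S=S\circ\gm$, with \cref{S^2-is-id}, which gives $S^2=\id$, and compute
\begin{align*}
(S\circ\gm)^2=S\circ\gm\circ S\circ\gm=S\circ S\circ\gm\circ\gm=S^2\circ\gm^2=\id .
\end{align*}
Hence $\gm$ satisfies conditions (i) and (ii) of \cref{*-from-gamma}, and therefore $UT_q(n)$ carries a Hopf $\ast$-algebra structure.

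It then remains to check that this structure is the one named by the stated formula. Tracing the argument in the proof of \cite[Lemma~IV.8.2]{Kassel}, the involution associated to $\gm$ is $\ast=\gm\circ S$ (which coincides with $S\circ\gm$ by \cref{gm-comm-with-S}, so there is no ambiguity): it is antilinear, since $\gm$ is antilinear and $S$ is $K$-linear; it is an antimorphism of $K_0$-algebras, since $\gm$ is an algebra morphism and $S$ an algebra antimorphism; it is an involution by the displayed computation; it is a morphism of $K_0$-coalgebras, since $\gm$ and $S$ are each antimorphisms of coalgebras; and $(\ast\circ S)^2=(\gm\circ S^2)^2=\gm^2=\id$. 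Since $UT_q(n)$ is generated as a $K$-algebra by the $a_{ij}$ together with the inverses $a_{ii}^{-1}$, and $(a_{ii}^{-1})^\ast=(a_{ii}^\ast)^{-1}$ holds automatically for an algebra antimorphism, the involution $\ast$ is completely determined by the values $a_{ij}^\ast=(\gm\circ S)(a_{ij})$, $1\le i\le j\le n$, as claimed.

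Everything here is immediate once \cref{gm-existence,gm-comm-with-S,S^2-is-id} are in hand, so I do not expect any serious obstacle; the only point calling for a little care is confirming that the concrete expression $a_{ij}^\ast=(\gm\circ S)(a_{ij})$ really names the abstract $\ast$-structure extracted from \cref{*-from-gamma}, which is precisely the bookkeeping recorded in the proof of \cite[Lemma~IV.8.2]{Kassel}.
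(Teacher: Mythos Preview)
Your proposal is correct and follows essentially the same approach as the paper: invoke \cref{*-from-gamma} for the map $\gm$ from \cref{gm-existence}, and verify the remaining condition $(S\circ\gm)^2=\id$ by combining \cref{gm-comm-with-S} with \cref{S^2-is-id} and $\gm^2=\id$. The paper's proof is a one-line citation of exactly these ingredients; your version is simply more explicit, particularly in spelling out why the resulting involution is $\gm\circ S$ on generators.
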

	\begin{proof}
		This follows from \cref{*-from-gamma,gm-existence}, where $(\gm\circ S)^2=\gm^2\circ S^2=\id$ by \cref{gm-comm-with-S,gm-existence,T:sec3:main:thm}. 
	\end{proof}

	\section{Derivations of $T_q(2)$ and $UT_q(2)$}\label{sec-der}

	In this section we assume that $q$ is not a root of unity.
	Since $T_q(2)$ is a quantum affine space (see \cref{ex-T_q(2)}), we can specify the results of~\cite[Theorem 1.2]{Alev-Chamarie} to this algebra.

	For $(s,t)\in\{(1,1),(1,2),(2,2)\}$ and $\nu=(\nu_{11},\nu_{12},\nu_{22})\in\NN^3$ denote by $D_{st,\nu}$ the map $\{a_{11},a_{12},a_{22}\}\to K\gen{a_{11},a_{12},a_{22}}$  
	sending $a_{st}$ to $a^\nu:=a_{11}^{\nu_{11}}a_{12}^{\nu_{12}}a_{22}^{\nu_{22}}$ and $a_{ij}$ to $0$ for $(i,j)\ne (s,t)$. By the Leibniz rule, 
	$D_{st,\nu}$ uniquely extends to a derivation of $K\gen{a_{11},a_{12},a_{22}}$. Moreover, by \cref{a_22a_12-is-qa_12a_22,a_11a_12-is-qa_12a_11,a_11a_22-is-a_22a_11} (see also \cite[p.~1789]{Alev-Chamarie}), $D_{st,\nu}$ defines a derivation of $T_q(2)$ if and only if the following equalities hold in $T_q(2)$:
	\begin{align}
		D_{st,\nu}(a_{22})a_{12}+a_{22}D_{st,\nu}(a_{12})&=q(D_{st,\nu}(a_{12})a_{22}+a_{12}D_{st,\nu}(a_{22})),\label{D(a_22a_12)=qD(a_12a_22)}\\
		D_{st,\nu}(a_{11})a_{12}+a_{11}D_{st,\nu}(a_{12})&=q(D_{st,\nu}(a_{12})a_{11}+a_{12}D_{st,\nu}(a_{11})),\label{D(a_11a_12)=qD(a_12a_11)}\\
		D_{st,\nu}(a_{11})a_{22}+a_{11}D_{st,\nu}(a_{22})&=D_{st,\nu}(a_{22})a_{11}+a_{22}D_{st,\nu}(a_{11}).\label{D(a_11a_22)=D(a_22a_11)}
	\end{align} 
	The following lemma characterizes this situation.
	
	\begin{lemma}\label{dertypes}
		We have
		\begin{enumerate}
			\item\label{D_1_nu} $D_{11,\nu}$ defines a derivation of $T_q(2)$ if and only if $\nu\in\{(0,0,1),(1,0,0)\}$;
			\item\label{D_2_nu} $D_{12,\nu}$ defines a derivation of $T_q(2)$ if and only if $\nu_{12}=1$;
			\item\label{D_3_nu} $D_{22,\nu}$ defines a derivation of $T_q(2)$ if and only if $\nu\in\{(0,0,1),(1,0,0)\}$.
		\end{enumerate}
	\end{lemma}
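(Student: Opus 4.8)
The strategy is to substitute the definition of $D_{st,\nu}$ directly into the three defining equations \cref{D(a_22a_12)=qD(a_12a_22),D(a_11a_12)=qD(a_12a_11),D(a_11a_22)=D(a_22a_11)} and, for each of the three choices of $(s,t)$, reduce everything to a comparison of monomials in the PBW-type basis $\{a_{11}^{\alpha}a_{12}^{\beta}a_{22}^{\gamma}\}$ of $T_q(2)$. Since $T_q(2)\cong A_3(Q)$ is a quantum affine space (see \cref{ex-T_q(2)}), two such monomials are equal if and only if they are literally the same, which turns each functional equation into a scalar identity in $q$; as $q$ is not a root of unity, these force conditions on $\nu$.

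First I would handle \ref{D_2_nu}, the case $(s,t)=(1,2)$. Here $D_{12,\nu}(a_{11})=D_{12,\nu}(a_{22})=0$ and $D_{12,\nu}(a_{12})=a^\nu=a_{11}^{\nu_{11}}a_{12}^{\nu_{12}}a_{22}^{\nu_{22}}$, so \cref{D(a_11a_22)=D(a_22a_11)} is automatic, while \cref{D(a_22a_12)=qD(a_12a_22)} becomes $a_{22}a^\nu=qa^\nu a_{22}$ and \cref{D(a_11a_12)=qD(a_12a_11)} becomes $a_{11}a^\nu=qa^\nu a_{11}$. Using \cref{a_22a_12-is-qa_12a_22,a_11a_12-is-qa_12a_11,a_11a_22-is-a_22a_11} to commute $a_{22}$ (resp.\ $a_{11}$) past $a^\nu$, one gets $a_{22}a^\nu=q^{-\nu_{12}}a^\nu a_{22}$ and $a_{11}a^\nu=q^{-\nu_{12}}a^\nu a_{11}$ (note $a_{11}$ and $a_{22}$ commute, so only the $a_{12}$-exponent matters). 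Thus both equations reduce to $q^{-\nu_{12}}=q$, i.e.\ $\nu_{12}=-1$ — wait, I should be careful with the sign of the convention in \cref{a_22a_12-is-qa_12a_22}: $a_{22}a_{12}=qa_{12}a_{22}$ gives $a_{22}a_{12}^{\nu_{12}}=q^{\nu_{12}}a_{12}^{\nu_{12}}a_{22}$, so the condition is $q^{\nu_{12}}=q$, forcing $\nu_{12}=1$ with no constraint on $\nu_{11},\nu_{22}$. This proves \ref{D_2_nu}.

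Next I would do \ref{D_1_nu}, the case $(s,t)=(1,1)$ (and \ref{D_3_nu} is entirely analogous, or follows by applying the automorphism $\rho$ from \cref{P:auto-rho}, which swaps $a_{11}\leftrightarrow a_{22}$ and fixes $a_{12}$). For $D_{11,\nu}$ we have $D(a_{12})=D(a_{22})=0$ and $D(a_{11})=a^\nu$, so \cref{D(a_22a_12)=qD(a_12a_22)} is automatic; \cref{D(a_11a_12)=qD(a_12a_11)} becomes $a^\nu a_{12}=q a_{12}a^\nu$, and \cref{D(a_11a_22)=D(a_22a_11)} becomes $a^\nu a_{22}=a_{22}a^\nu$. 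Commuting $a_{12}$ and $a_{22}$ past $a^\nu$ using the relations, the first gives a condition of the form $q^{\,c_1(\nu)}=q$ and the second $q^{\,c_2(\nu)}=1$ for explicit linear forms $c_1,c_2$ in $(\nu_{11},\nu_{12},\nu_{22})$; since $q$ is not a root of unity these become $c_1(\nu)=1$ and $c_2(\nu)=0$. Solving this linear system over $\NN$ should pin $\nu$ down to exactly the two solutions $(1,0,0)$ and $(0,0,1)$ claimed in \ref{D_1_nu}. The only mildly delicate point — and the place where a sign or exponent slip is easiest — is bookkeeping the exact power of $q$ produced when moving a generator past $a^\nu=a_{11}^{\nu_{11}}a_{12}^{\nu_{12}}a_{22}^{\nu_{22}}$; but this is a routine computation with the three defining relations, and there is no conceptual obstacle. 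I would then remark that \ref{D_3_nu} follows from \ref{D_1_nu} by symmetry under $\rho$.
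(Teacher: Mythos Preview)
Your proposal is correct and follows essentially the same approach as the paper: substitute $D_{st,\nu}$ into the three relations \cref{D(a_22a_12)=qD(a_12a_22),D(a_11a_12)=qD(a_12a_11),D(a_11a_22)=D(a_22a_11)}, reduce to scalar identities in $q$, and use that $q$ is not a root of unity to extract linear conditions on $\nu$ (for \cref{D_1_nu} these are exactly $\nu_{11}+\nu_{22}=1$ and $\nu_{12}=0$, as in the paper). The only difference is cosmetic: the paper treats \cref{D_3_nu} by a direct repeat of the computation rather than invoking the $\rho$-symmetry you suggest, but your shortcut is valid since $\rho\circ D_{22,\nu}\circ\rho$ is a nonzero scalar multiple of $D_{11,(\nu_{22},\nu_{12},\nu_{11})}$.
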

	\begin{proof}
		\textit{\cref{D_1_nu}} Let $(s,t)=(1,1)$. Observe that \cref{D(a_11a_12)=qD(a_12a_11)} is equivalent to $D_{11,\nu}(a_{11})a_{12}=q a_{12}D_{11,\nu}(a_{11})$, i.e., $a^\nu\cdot a_{12}=qa_{12}\cdot a^\nu\iff (q^{\nu_{22}}-q^{1-\nu_{11}})a_{11}^{\nu_{11}}a_{12}^{\nu_{12}+1}a_{22}^{\nu_{22}}=0$. Since $q$ is not a root of unity, the latter is equivalent to $\nu_{11}+\nu_{22}=1\iff\{\nu_{11},\nu_{22}\}=\{0,1\}$. Furthermore, \cref{D(a_11a_22)=D(a_22a_11)} reduces to $D_{11,\nu}(a_{11})a_{22}=a_{22}D_{11,\nu}(a_{11})\iff a^\nu\cdot a_{22}=a_{22}\cdot a^\nu\iff (1-q^{\nu_{12}})a_{11}^{\nu_{11}}a_{12}^{\nu_{12}}a_{22}^{\nu_{22}+1}=0\iff\nu_{12}=0$. Finally, \cref{D(a_22a_12)=qD(a_12a_22)} holds trivially for any $\nu$.
		
		\textit{\cref{D_2_nu}} Let $(s,t)=(1,2)$. We have \cref{D(a_11a_12)=qD(a_12a_11)} $\iff a_{11}D_{12,\nu}(a_{12})=qD_{12,\nu}(a_{12})a_{11}\iff a_{11}\cdot a^\nu=qa^\nu\cdot a_{11}\iff (1-q^{1-\nu_{12}})a_{11}^{\nu_{11}+1}a_{12}^{\nu_{12}}a_{22}^{\nu_{22}}=0$, \cref{D(a_22a_12)=qD(a_12a_22)} $\iff a_{22}D_{12,\nu}(a_{12})=qD_{12,\nu}(a_{12})a_{22}\iff a_{22}\cdot a^\nu=qa^\nu\cdot a_{22}\iff (q^{\nu_{12}}-q)a_{11}^{\nu_{11}}a_{12}^{\nu_{12}}a_{22}^{\nu_{22}+1}=0$ and \cref{D(a_11a_22)=D(a_22a_11)} is trivially satisfied, giving the unique condition $\nu_{12}=1$.
		
		\textit{\cref{D_3_nu}} Let $(s,t)=(2,2)$. We have \cref{D(a_11a_22)=D(a_22a_11)} $\iff a_{11}D_{22,\nu}(a_{22})=D_{22,\nu}(a_{22})a_{11}\iff a_{11}\cdot a^\nu=a^\nu\cdot a_{11}\iff (1-q^{-\nu_{12}})a_{11}^{\nu_{11}+1}a_{12}^{\nu_{12}}a_{22}^{\nu_{22}}=0$, \cref{D(a_22a_12)=qD(a_12a_22)} $\iff D_{22,\nu}(a_{22})a_{12}=qa_{12}D_{22,\nu}(a_{22})\iff a^\nu\cdot a_{12}=qa_{12} \cdot a^\nu\iff (q^{\nu_{22}}-q^{1-\nu_{11}})a_{11}^{\nu_{11}}a_{12}^{\nu_{12}+1}a_{22}^{\nu_{22}}=0$ and \cref{D(a_11a_12)=qD(a_12a_11)} is trivially satisfied, giving $\nu_{12}=0$ and $\nu_{11}+\nu_{22}=1$.
	\end{proof}
	
	Following~\cite[1.2]{Alev-Chamarie}, set 
	\begin{align*}
		D_{11}:=D_{11,(1,0,0)},\quad D_{12}:=D_{12,(0,1,0)},\quad D_{22}:=D_{22,(0,0,1)}.
	\end{align*} 
	Furthermore, if 
	\begin{align*}
		\Lb_{st}=\{\nu\in\NN^3: \nu_{st}=0\text{ and }D_{st,\nu}\in\Der(T_q(2))\},    
	\end{align*}
	then \cref{dertypes} gives
	\begin{align*}
		\Lb_{11}=\{(0,0,1)\},\quad \Lb_{12}=\emptyset,\quad \Lb_{22}=\{(1,0,0)\},   
	\end{align*}
	so that
	\begin{align}\label{E:E-for-Tq2}
		E:=\Span_K\{D_{st,\nu}: 1\le s\le t\le 2\text{ and }\nu\in\Lb_{st}\}=K D_{11,(0,0,1)}\oplus K D_{22,(1,0,0)}.
	\end{align}

	By~\cite[Theorem 1.2]{Alev-Chamarie} and \cref{center-T_q(n)} we have the following characterization of $\Der(T_q(2))$.
	
	\begin{corollary}\label{D_ij-and-E-for-T_q(2)}
		\begin{align}\label{Der(T_q(2))-decomp}
			\Der(T_q(2))=\IDer(T_q(2))\oplus KD_{11}\oplus K D_{12}\oplus K D_{22}\oplus K D_{11,(0,0,1)}\oplus K D_{22,(1,0,0)}.
		\end{align}
		In particular, $\dim(\hoch^1(T_q(2)))=5$.
	\end{corollary}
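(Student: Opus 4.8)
The plan is to invoke directly the structure theorem for derivations of a multiparameter quantum affine space, \cite[Theorem~1.2]{Alev-Chamarie}, since by \cref{ex-T_q(2)} the algebra $T_q(2)$ is isomorphic to $A_3(Q)$ with $Q=\left(\begin{smallmatrix}1&q&1\\ q^{-1}&1&q^{-1}\\ 1&q&1\end{smallmatrix}\right)$. That theorem describes $\Der(A_n(Q))$ as a direct sum of three pieces: the inner derivations, a piece spanned by the ``coordinate'' derivations $D_{st}$ (each $D_{st}$ multiplies the $st$-generator by itself, i.e.\ acts as $x_{st}\partial_{st}$ in the quantum sense), and a piece $E$ spanned by the extra derivations $D_{st,\nu}$ that arise precisely when the exponent vector $\nu$ (with $\nu_{st}=0$) is such that $a^\nu$ is ``$\sg_{st}$-homogeneous of the right weight'' — equivalently, $D_{st,\nu}$ respects the defining commutation relations. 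So the first step is simply to record that \cref{dertypes} has already carried out the combinatorial computation identifying exactly which $D_{st,\nu}$ are derivations, yielding $\Lb_{11}=\{(0,0,1)\}$, $\Lb_{12}=\emptyset$, $\Lb_{22}=\{(1,0,0)\}$, and hence the two-dimensional space $E=KD_{11,(0,0,1)}\oplus KD_{22,(1,0,0)}$ of \cref{E:E-for-Tq2}.

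The second step is to assemble these pieces into \cref{Der(T_q(2))-decomp}. By \cite[Theorem~1.2]{Alev-Chamarie},
\[
\Der(T_q(2))=\IDer(T_q(2))\oplus \left(\bigoplus_{1\le s\le t\le 2}KD_{st}\right)\oplus E,
\]
and substituting the value of $E$ just computed gives precisely the asserted five-term direct sum decomposition. Here one should note that the hypothesis of \cite[Theorem~1.2]{Alev-Chamarie} requires a genuine direct sum, which in turn uses that the center is trivial (so that $\IDer$ meets the ``diagonal'' torus piece only in zero); this is exactly where \cref{center-T_q(n)} — valid since $q$ is not a root of unity, our standing assumption in this section — is needed, and it is why the corollary is stated as a consequence of both cited results.

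For the dimension count, one reads off from the decomposition that $\hoch^1(T_q(2))=\Der(T_q(2))/\IDer(T_q(2))$ is isomorphic as a vector space to $KD_{11}\oplus KD_{12}\oplus KD_{22}\oplus E$, which has dimension $3+2=5$. I do not anticipate a genuine obstacle here: the entire content is a bookkeeping application of \cite[Theorem~1.2]{Alev-Chamarie} together with the already-completed case analysis in \cref{dertypes}; the only point requiring a moment's care is verifying that the sum is direct (that the $D_{st}$ and the $D_{st,\nu}$ are not inner and are linearly independent modulo $\IDer$), but this is part of the statement of the cited theorem once triviality of the center is in hand via \cref{center-T_q(n)}.
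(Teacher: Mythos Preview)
Your proposal is correct and follows exactly the paper's approach: the corollary is obtained by directly applying \cite[Theorem~1.2]{Alev-Chamarie} together with the triviality of the center from \cref{center-T_q(n)}, after the case analysis of \cref{dertypes} has identified the sets $\Lb_{st}$ and hence the space $E$ of \cref{E:E-for-Tq2}. The paper gives no further argument beyond this citation, so your write-up is, if anything, more detailed than what appears there.
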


	Next, we tackle the derivation Lie algebra of the Hopf algebra $UT_q(2)$. We will see that $\hoch^1(UT_q(2))$ is infinite dimensional, although free of rank $3$ over $\hoch^0(UT_q(2))=Z(UT_q(2))$.
	
	Recall that $UT_q(2)$ can be seen as the localization of $T_q(2)$ at the powers of $a_{11}$ and $a_{22}$. We will consider the subalgebra $\mathcal P_q$ generated by $a_{11}^{\pm1}$ and $a_{12}$, and the localization $\mathcal T_q$ of $\mathcal P_q$ at the powers of $a_{12}$. Notice that both $\mathcal T_q$ and $\mathcal P_q$ are localizations of the quantum plane $A_2(q\m)$ generated by $a_{11}$ and $a_{12}$. 
	For simplicity, we use the following more intuitive notation: $\mathcal P_q=K_q[a_{11}^{\pm1}, a_{12}]$ and $\mathcal T_q=K_q[a_{11}^{\pm1}, a_{12}^{\pm1}]$.
	
	\begin{lemma}\label{interm-centers}
		We have $Z(\mathcal T_q)=Z(\mathcal P_q)=K$ and $Z(UT_q(2))=K[z^{\pm 1}]$, where $z=a_{11}a_{22}^{-1}$. Moreover, $UT_q(2)=\mathcal P_q[z^{\pm 1}]$, a (commutative) Laurent polynomial extension of $\mathcal P_q$.
	\end{lemma}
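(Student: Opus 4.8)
The plan is to analyze the three algebras in turn, exploiting the Ore-extension/localization descriptions already set up. First I would handle $\mathcal{P}_q=K_q[a_{11}^{\pm 1},a_{12}]$: it is the localization at powers of $a_{11}$ of the quantum plane $A_2(q^{-1})$ on $a_{11},a_{12}$, with $a_{11}a_{12}=qa_{12}a_{11}$. A general element is a Laurent polynomial $\sum_{m\in\ZZ,\ n\ge 0}c_{mn}a_{11}^{m}a_{12}^{n}$; conjugating by $a_{11}$ multiplies the $(m,n)$-term by $q^{n}$ and conjugating by $a_{12}$ multiplies it by $q^{-m}$, so centrality forces $n=0$ and $m=0$ since $q$ is not a root of unity. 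Hence $Z(\mathcal{P}_q)=K$, and the same computation (now allowing $n\in\ZZ$) gives $Z(\mathcal{T}_q)=K$.

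Next I would establish the decomposition $UT_q(2)=\mathcal{P}_q[z^{\pm 1}]$. Recall from \cref{R:UTqn-as-localized-Tqn} that $UT_q(2)$ is the localization of $T_q(2)$ at the powers of $a_{11}$ and $a_{22}$, so it is generated by $a_{11}^{\pm 1},a_{12},a_{22}^{\pm 1}$. Since $a_{11}$ and $a_{22}$ commute (relation \cref{a_11a_22-is-a_22a_11}), $z=a_{11}a_{22}^{-1}$ is a unit that commutes with $a_{11}$, and from $a_{22}a_{12}=qa_{12}a_{22}$ together with $a_{11}a_{12}=qa_{12}a_{11}$ one gets $za_{12}=a_{11}a_{22}^{-1}a_{12}=a_{11}q^{-1}a_{12}a_{22}^{-1}=q^{-1}\cdot q a_{12}a_{11}a_{22}^{-1}=a_{12}z$, so $z$ is central in $UT_q(2)$. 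Writing $a_{22}^{-1}=z a_{11}^{-1}$, the subalgebra $\mathcal{P}_q[z^{\pm 1}]$ contains all generators, hence equals $UT_q(2)$; and because $z$ is central and algebraically independent over $\mathcal{P}_q$ (degree considerations in the $\NN$- or $\ZZ$-grading by powers of $a_{22}$, or directly from the PBW basis of the iterated Ore extension), the extension is a genuine (commutative) Laurent polynomial extension, i.e. $UT_q(2)\cong\mathcal{P}_q[z^{\pm 1}]$ as algebras.

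Finally, for $Z(UT_q(2))$: an element of $\mathcal{P}_q[z^{\pm 1}]$ is central iff each of its $z$-coefficients (which lie in $\mathcal{P}_q$) is central in $UT_q(2)$; since $z$ is central, this reduces to each coefficient being central in $\mathcal{P}_q$, i.e. lying in $Z(\mathcal{P}_q)=K$ by the first step. Therefore $Z(UT_q(2))=K[z^{\pm 1}]$. The only mild subtlety — and the step I would be most careful about — is justifying that $z$ is transcendental over $\mathcal{P}_q$ and that $UT_q(2)$ is free as a $\mathcal{P}_q$-module on the powers of $z$; this is where one must invoke the iterated-Ore-extension PBW basis (the monomials $a_{11}^{\mu}a_{12}^{\nu}a_{22}^{\lambda}$ with $\mu,\lambda\in\ZZ$, $\nu\in\NN$ form a $K$-basis of $UT_q(2)$), after which everything else is a direct grading/commutation computation.
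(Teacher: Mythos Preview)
Your proposal is correct and uses essentially the same ingredients as the paper---the PBW monomial basis, the non-root-of-unity hypothesis, and the identification of $UT_q(2)$ with $\mathcal{P}_q[z^{\pm 1}]$---but organizes them differently. The paper computes all three centers uniformly by observing that central elements in a quantum affine space (or a localization thereof) are spanned by monomials $a^{\nu}$ whose exponent vector lies in the null space of the integer matrix $\mathcal{Q}$ of commutation exponents: for $UT_q(2)$ this null space is $\ZZ\cdot(1,0,-1)$, yielding $K[z^{\pm 1}]$ directly, while for $\mathcal{P}_q$ and $\mathcal{T}_q$ the relevant $2\times 2$ matrix has trivial null space. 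You instead deduce $Z(UT_q(2))$ \emph{from} the decomposition $UT_q(2)=\mathcal{P}_q[z^{\pm 1}]$ together with $Z(\mathcal{P}_q)=K$. Both arguments are short and rest on the same PBW/freeness facts; the paper's is marginally more economical (one null-space computation handles all three centers), whereas yours makes explicit how the center arises from the central Laurent extension. One small wording point: in $\mathcal{P}_q$ the element $a_{12}$ is not invertible, so ``conjugating by $a_{12}$'' should read ``checking commutation with $a_{12}$''; this does not affect the argument.
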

	\begin{proof}
		The commutation relations for $T_q(2)$ and $UT_q(2)$ are given by the matrix $Q$ in \cref{ex-T_q(2)}. The corresponding matrix of exponents is $\mathcal Q=\left(\begin{smallmatrix} 0&1&0\\{-1}&0&{-1}\\ 0&1&0\end{smallmatrix}\right)$ and $Z(UT_q(2))$ is the linear span of the monomials $a_{11}^{\nu_{11}}a_{12}^{\nu_{12}}a_{22}^{\nu_{22}}$ with $(\nu_{11}, \nu_{12}, \nu_{22})\in \ZZ\times\NN\times\ZZ$ in the null space of $\mathcal Q$. Since this null space in $\ZZ^3$ is $\{k(1,0,-1): k\in\ZZ\}$, it follows that $Z(UT_q(2))$ is the Laurent polynomial ring $K[z^{\pm 1}]$. Similarly, since the matrix of exponents of the commutation relations for $\mathcal T_q$ and $\mathcal P_q$ is $\left(\begin{smallmatrix} 0&1\\{-1}&0\end{smallmatrix}\right)$, which has trivial null space, it follows that $Z(\mathcal T_q)=Z(\mathcal P_q)=K$.
		
		Clearly, $\mathcal P_q[z^{\pm 1}]\subseteq UT_q(2)$. Note that $\mathcal P_q[z^{\pm 1}]$ is generated as a vector space by $\{ a_{11}^{i}a_{12}^{j}a_{11}^{k}a_{22}^{-k}: i, k\in\ZZ, j\in\NN \}$. Up to nonzero scalars, this set is just $\{ a_{11}^{i}a_{12}^{j}a_{22}^{k}: i, k\in\ZZ, j\in\NN \}$, which is linearly independent, hence a basis of $\mathcal P_q[z^{\pm 1}]$. But the latter is also a basis of $UT_q(2)$, showing the equality $UT_q(2)=\mathcal P_q[z^{\pm 1}]$ and the algebraic independence of $z^{\pm 1}$ over $\mathcal P_q$. Note in particular that $a_{22}^{\pm1}=a_{11}^{\pm1}z^{\mp1}$.
	\end{proof}
	
	Note that any derivation $D$ of $T_q(2)$ extends uniquely to a derivation of $UT_q(2)$, still denoted $D$, by localization, using $D(x^{-1})=-x^{-1} D(x)x^{-1}$, for $x=a_{11}^{k}, a_{22}^{k}$ ($k\in\NN$) and the Leibniz rule. In particular, the (extended) derivations $D_{11}$, $D_{12}$ and $D_{22}$ play an important role in the description of $\Der(UT_q(2))$.
	
	\begin{theorem}
		We have
		\begin{align}\label{Der(UT_q(2))-decomp}
			\Der(UT_q(2))=\IDer(UT_q(2))\oplus K[z^{\pm 1}] D_{11}\oplus K[z^{\pm 1}] D_{12}\oplus K[z^{\pm 1}] D_{22}.
		\end{align}
		In particular, $\hoch^1(UT_q(2))$ is a free module of rank $3$ over $Z(UT_q(2))=K[z^{\pm 1}]$.
	\end{theorem}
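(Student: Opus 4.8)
The plan is to reduce the description of $\Der(UT_q(2))$ to known results about quantum affine spaces and their localizations. First I would observe that, by \cref{interm-centers}, $UT_q(2)=\mathcal P_q[z^{\pm 1}]$ is a (commutative) Laurent polynomial extension of $\mathcal P_q=K_q[a_{11}^{\pm1},a_{12}]$, with $a_{22}^{\pm1}=a_{11}^{\pm1}z^{\mp1}$ and $z$ central. Since $\mathcal P_q$ is a localization of the quantum plane $A_2(q\m)$ at the powers of the normal element $a_{11}$, every derivation of $A_2(q\m)$ extends uniquely to $\mathcal P_q$, and I would first pin down $\Der(\mathcal P_q)$. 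The plan is to use~\cite[Theorem 1.2]{Alev-Chamarie} applied to $A_2(q\m)$, together with the fact that localizing at powers of $a_{11}$ only enlarges inner derivations (now $a_{11}^{-1}$-conjugation is available), to get $\Der(\mathcal P_q)=\IDer(\mathcal P_q)\oplus K D_{11}\oplus K D_{12}$; here $D_{11}$ acts as $a_{11}\partial_{a_{11}}$ (the Euler derivation in $a_{11}$) and $D_{12}$ as $a_{12}\partial_{a_{12}}$, and one checks that no new nontrivial outer "$\nu$-type" derivations appear because the relevant commutation exponents in the $2\times 2$ case (matrix $\mathcal Q$ truncated to $a_{11},a_{12}$) leave no room, just as $\Lb_{12}=\emptyset$ in \cref{dertypes}. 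Note $Z(\mathcal P_q)=K$, so $\hoch^1(\mathcal P_q)$ is the $2$-dimensional space $KD_{11}\oplus KD_{12}$.

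Next I would handle the Laurent extension $UT_q(2)=\mathcal P_q[z^{\pm 1}]$. A derivation $D$ of $UT_q(2)$ is determined by its restriction to $\mathcal P_q$ together with $D(z)$, subject to the compatibility that, since $z$ is central, $D$ of the relation $z w=w z$ (for $w\in\mathcal P_q$) forces $D(z)$ to commute with all of $\mathcal P_q$, i.e.\ $D(z)\in Z(\mathcal P_q[z^{\pm1}])\cap(\text{stuff})$... more precisely, differentiating $zw-wz=0$ gives $[D(z),w]=[z,D(w)]=0$ for all $w$, so $D(z)\in Z(UT_q(2))=K[z^{\pm1}]$. Write $D(z)=f(z)$. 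One checks the inner derivation $\mathrm{ad}$ contributes nothing new to $D(z)$ (inner derivations kill central elements), and modulo $\mathrm{ad}$ we may assume the restriction $D|_{\mathcal P_q}$ is a $K[z^{\pm1}]$-combination of $D_{11}$ and $D_{12}$ by the previous paragraph \emph{applied coefficientwise}—this is the point requiring care: a derivation of $UT_q(2)$ restricts to a $Z$-linear... no, rather, to an additive map $\mathcal P_q\to UT_q(2)$ satisfying Leibniz, and one must argue it lands back (modulo inner) in $K[z^{\pm1}]D_{11}\oplus K[z^{\pm1}]D_{12}$. The clean way: since $z$ is a central indeterminate, $\Der(\mathcal P_q[z^{\pm1}])$ as a Lie algebra decomposes using the standard fact that $\Der(A[z^{\pm1}])\cong (\Der(A)\otimes_{Z(A)}Z(A)[z^{\pm1}]) \oplus (Z(A)[z^{\pm1}]\cdot \partial_z)$-type statement; here $Z(A)=K$ so $\Der(\mathcal P_q)\otimes_K K[z^{\pm1}]$ plus $K[z^{\pm1}]\partial_z$. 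Translating $\partial_z$ back into the generators $a_{11},a_{12},a_{22}$: since $z=a_{11}a_{22}^{-1}$, the derivation $z\partial_z$ (degree-zero, so it stays in the module) corresponds to $D_{11}-(-D_{22})$... I would compute that $D_{11}(z)=D_{11}(a_{11}a_{22}^{-1})=a_{11}\cdot a_{22}^{-1}=z$ and $D_{22}(z)=a_{11}\cdot(-a_{22}^{-1})\cdot 1=-z$ (using $D_{22}(a_{22}^{-1})=-a_{22}^{-1}D_{22}(a_{22})a_{22}^{-1}=-a_{22}^{-1}$), so $D_{11}$ and $D_{22}$ together with $D_{12}$ already span everything over $K[z^{\pm1}]$: indeed $D_{11}+D_{22}$ (which sends $z\mapsto 0$ and restricts to the Euler derivation of $\mathcal P_q$ in $a_{11}$, extended trivially) lies in $K D_{11}$-part's image, while $D_{11}-D_{22}$ covers the $z\partial_z$ direction.

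Finally I would assemble the direct sum and check freeness. The spanning statement $\Der(UT_q(2))=\IDer(UT_q(2))+K[z^{\pm1}]D_{11}+K[z^{\pm1}]D_{12}+K[z^{\pm1}]D_{22}$ follows from the two reduction steps above. For directness: suppose $\delta:=\mathrm{ad}_u + f_1(z)D_{11}+f_2(z)D_{12}+f_3(z)D_{22}=0$ with $u\in UT_q(2)$, $f_i\in K[z^{\pm1}]$. Evaluate on the central element $z$: $\mathrm{ad}_u(z)=0$, $D_{11}(z)=z$, $D_{22}(z)=-z$, $D_{12}(z)=D_{12}(a_{11}a_{22}^{-1})=0$, so $(f_1(z)-f_3(z))z=0$, forcing $f_1=f_3$. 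Then $\mathrm{ad}_u+f_1(z)(D_{11}+D_{33})... $ wait, $=\mathrm{ad}_u+f_1(z)(D_{11}+D_{22})+f_2(z)D_{12}=0$. Now restrict everything to the subalgebra $\mathcal P_q$ (noting $D_{11}+D_{22}$ restricted to $\mathcal P_q$ is just the Euler derivation $a_{11}\partial_{a_{11}}$ since $D_{22}$ kills $a_{11}$ and $a_{12}$; more carefully $D_{22}|_{\mathcal P_q}=0$): we get $\mathrm{ad}_{u}|_{\mathcal P_q}+f_1(z)D_{11}|_{\mathcal P_q}+f_2(z)D_{12}|_{\mathcal P_q}=0$ as a map $\mathcal P_q\to UT_q(2)$. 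Decomposing $u=\sum_k u_k z^k$ with $u_k\in\mathcal P_q$ and matching $z$-degrees (legitimate since $z$ is a central variable and $D_{11},D_{12}$ preserve $z$-degree while $\mathrm{ad}_{u_kz^k}$ shifts nothing in $z$-degree because $z$ is central, so it acts in degree $k$), we reduce to: for each $k$, $\mathrm{ad}_{u_k}|_{\mathcal P_q}=-f_1^{(k)}D_{11}|_{\mathcal P_q}-f_2^{(k)}D_{12}|_{\mathcal P_q}$ in $\Der(\mathcal P_q)$ composed into $UT_q(2)$; but $D_{11},D_{12}$ map $\mathcal P_q$ into $\mathcal P_q$, so this is an equality in $\Der(\mathcal P_q)$, and by the established decomposition $\Der(\mathcal P_q)=\IDer(\mathcal P_q)\oplus KD_{11}\oplus KD_{12}$ we get $f_1^{(k)}=f_2^{(k)}=0$ for all $k$, hence $f_1=f_2=0$, hence $f_3=f_1=0$, hence $\mathrm{ad}_u=0$. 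This proves the sum is direct, so $\hoch^1(UT_q(2))=\Der(UT_q(2))/\IDer(UT_q(2))\cong K[z^{\pm1}]D_{11}\oplus K[z^{\pm1}]D_{12}\oplus K[z^{\pm1}]D_{22}$ is free of rank $3$ over $Z(UT_q(2))=K[z^{\pm1}]$.

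The main obstacle I anticipate is the bookkeeping in the reduction from $\Der(\mathcal P_q)$ to $\Der(UT_q(2))$: precisely justifying that one may match $z$-degrees (i.e.\ that $\Der$ of the central Laurent extension behaves as expected, and that localizing at $a_{11},a_{22}$ does not create exotic outer derivations beyond rescaling inner ones), and computing the action of the basic derivations on $z=a_{11}a_{22}^{-1}$ to see that $\{D_{11},D_{12},D_{22}\}$ really is the right $K[z^{\pm1}]$-basis modulo inner derivations rather than, say, $\{D_{11}+D_{22}, D_{12}, D_{11}-D_{22}\}$. Everything else is a direct application of~\cite[Theorem 1.2]{Alev-Chamarie}, \cref{dertypes,D_ij-and-E-for-T_q(2),interm-centers}.
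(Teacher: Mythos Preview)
Your approach is essentially the same as the paper's: both reduce first to $\Der(\mathcal P_q)=\IDer(\mathcal P_q)\oplus K D_{11}\oplus K D_{12}$, then use the central Laurent extension $UT_q(2)=\mathcal P_q[z^{\pm1}]$ to obtain the decomposition over $K[z^{\pm1}]$, and finally rewrite the natural basis $\{\overline{d_{11}},\overline{d_{12}},\partial_z\}$ in terms of $\{D_{11},D_{12},D_{22}\}$. The only difference is that the paper outsources the two technical steps you flagged as ``obstacles'' to the literature (\cite[Corollary~2.6]{OP95} for the localization $\mathcal P_q$ and \cite[Theorem~2.1]{LLO25pr} for derivations of central Laurent extensions), whereas you sketch them by hand via $z$-degree matching; your direct argument for directness is correct and slightly more explicit than what the paper writes.
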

	\begin{proof}
		We begin by computing and decomposing the derivation Lie algebra of $\mathcal P_q=K_q[a_{11}^{\pm1}, a_{12}]$. Since $Z(\mathcal T_q)=K$, it follows from \cite[Corollary 2.6]{OP95} that 
		\begin{align*}
			\Der(\mathcal P_q)=\IDer(\mathcal P_q)\oplus K d_{11}\oplus K d_{12},
		\end{align*}
		where $d_{11}(a_{11})=a_{11}$, $d_{11}(a_{12})=0$, $d_{12}(a_{11})=0$ and $d_{12}(a_{12})=a_{12}$. 
		
		Now, using the equality $UT_q(2)=\mathcal P_q[z^{\pm 1}]$ from \cref{interm-centers} and \cite[Theorem 2.1]{LLO25pr} (where we need to use an analogous version for central Laurent extensions, instead of polynomial extensions, but which has the same proof), we can conclude that 
		\begin{align*}
			\Der(\mathcal P_q[z^{\pm 1}])=\IDer(\mathcal P_q[z^{\pm 1}])\oplus K[z^{\pm 1}] \overline{d_{11}}\oplus K[z^{\pm 1}] \overline{d_{12}}\oplus K[z^{\pm 1}]\partial_z,
		\end{align*}
		where $\overline{d_{11}}$ and $\overline{d_{12}}$ are $d_{11}$ and $d_{12}$ extended to $\mathcal P_q[z^{\pm 1}]$ by setting $\overline{d_{11}}(z)=0=\overline{d_{12}}(z)$, $\partial_z(\mathcal P_q)=0$ and $\partial_z(z)=1$. 
		
		Using the fact that $a_{22}=a_{11}z^{-1}$, we can compute the values of these three derivations at $a_{22}$:
		\begin{align*}
			\overline{d_{11}}(a_{22})&=\overline{d_{11}}(a_{11}z^{-1})=\overline{d_{11}}(a_{11})z^{-1} + a_{11}\overline{d_{11}}(z^{-1}) =a_{11}z^{-1}=a_{22},\\
			\overline{d_{12}}(a_{22})&=\overline{d_{12}}(a_{11}z^{-1})=\overline{d_{12}}(a_{11})z^{-1} + a_{11}\overline{d_{12}}(z^{-1}) =0,\\
			\partial_z(a_{22})&=\partial_z(a_{11}z^{-1})=\partial_z(a_{11})z^{-1} + a_{11}\partial_z(z^{-1}) =-a_{11}z^{-2}.
		\end{align*}
		The next table summarizes all the relevant information:
		\begin{center}
			\begin{tabular}{c|ccc}
				&$\overline{d_{11}}$& $\overline{d_{12}}$ & $\partial_z$\\ \hline
				$a_{11}$&$a_{11}$ &0 &0 \\
				$a_{12}$& 0& $a_{12}$&0 \\
				$a_{22}$&$a_{22}$ &0 & $-z^{-2}a_{11}$
			\end{tabular}
		\end{center}
		It follows that $\overline{d_{11}}=D_{11}+D_{22}$, $\overline{d_{12}}=D_{12}$ and $\partial_z=-z^{-1}D_{22}$. Conversely, $D_{11}=\overline{d_{11}}+z\partial_z$, $D_{12}=\overline{d_{12}}$ and $D_{22}=-z\partial_z$, so 
		\[
		K[z^{\pm 1}] \overline{d_{11}}\oplus K[z^{\pm 1}] \overline{d_{12}}\oplus K[z^{\pm 1}]\partial_z=K[z^{\pm 1}] D_{11}\oplus K[z^{\pm 1}] D_{12}\oplus K[z^{\pm 1}] D_{22}.
		\]
	\end{proof}
	
	\section{Automorphisms of $T_q(2)$ and $UT_q(2)$}\label{sec-auto-T_q(2)}

	In this final section, we will give a complete description of the groups of algebra automorphisms of $T_q(2)$ and of $UT_q(2)$ in the case that $q$ is not a root of unity.

	\begin{remark}\label{vf-determined-by-p_ij}
		Any $\vf\in \End(T_q(2))$ is uniquely determined by $\vf(a_{ij})=p_{ij}$ satisfying 
		\begin{align}
			p_{22}p_{12}&=qp_{12}p_{22},\label{f_22f_12-is-qf_12f_22}\\
			p_{11}p_{12}&=qp_{12}p_{11},\label{f_11f_12-is-qf_12f_11}\\
			p_{11}p_{22}&=p_{22}p_{11}.\label{f_11f_22-is-f_22f_11}
		\end{align}
		Moreover, in case $\vf\in \End(UT_q(2))$, we additionally need to require that $p_{11}$ and $p_{22}$ be invertible in $UT_q(2)$, thus (nonzero scalar multiples of) monomials in $a_{11}$ and $a_{22}$.
	\end{remark}
	
	In particular, note that for any choice of $\alpha_{11}, \alpha_{12}, \alpha_{22}\in K^*$, there are automorphisms of $T_q(2)$ and of $UT_q(2)$ such that $a_{ij}\mapsto \alpha_{ij}a_{ij}$, for all $1\leq i\leq j\leq 2$. We call these the \textit{diagonal} automorphisms. The set of all such automorphisms forms a group isomorphic to $(K^*)^3$. We also recall the automorphism $\rho$ defined in \cref{P:auto-rho}, which fixes $a_{12}$ and interchanges $a_{11}$ and $a_{22}$. Since, in general, $\rho$ does not commute with the diagonal automorphisms, it follows that $\Aut(T_q(2))$ and $\Aut(UT_q(2))$ are nonabelian.

	\begin{lemma}\label{L:a12-fixed-by-autos}
		Suppose that $q\neq 1$. Let $A$ be either $T_q(2)$ or $UT_q(2)$ and $\vf\in\Aut(A)$. Then there is a unit $u\in A$ such that $\vf(a_{12})=ua_{12}$.
	\end{lemma}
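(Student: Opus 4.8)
The plan is to analyze the element $p_{12} := \vf(a_{12})$ using the commutation relations \cref{f_22f_12-is-qf_12f_22,f_11f_12-is-qf_12f_11,f_11f_22-is-f_22f_11} together with the fact that $\vf$ is bijective, and to exploit the $\ZZ^3$-grading on $A$ (coming from the realization of $T_q(2)$ as the quantum affine space $A_3(Q)$ in \cref{ex-T_q(2)}, and of $UT_q(2)$ as its localization). First I would pass to the graded setting: write $p_{12}=\sum_\nu c_\nu a^\nu$ (a finite sum of monomials $a^\nu = a_{11}^{\nu_{11}}a_{12}^{\nu_{12}}a_{22}^{\nu_{22}}$ with $c_\nu\in K^*$, where $\nu_{12}\in\NN$ and $\nu_{11},\nu_{22}\in\NN$ for $T_q(2)$ or $\in\ZZ$ for $UT_q(2)$), and similarly for $p_{11},p_{22}$. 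Since $p_{11}p_{22}=p_{22}p_{11}$, while $p_{11}$ (resp.\ $p_{22}$) does not commute with $p_{12}$ (as $q\neq 1$ and $\vf$ is injective, so $p_{12}\neq 0$), I would first pin down $p_{11}$ and $p_{22}$: the relation $p_{11}p_{12}=qp_{12}p_{11}$ forces, on each homogeneous component, a constraint on the exponents, and combined with bijectivity of $\vf$ this should show $p_{11}$ and $p_{22}$ are (scalar multiples of) monomials — indeed the image of a degree-one generator under an automorphism of a connected-type graded domain, or a unit in the case of $UT_q(2)$, must be homogeneous.

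Next, knowing $p_{11}=\af_{11}a^{\mu}$ and $p_{22}=\af_{22}a^{\eta}$ are monomials (units, in the $UT_q(2)$ case necessarily monomials in $a_{11},a_{22}$ alone by \cref{vf-determined-by-p_ij}), I would conjugate: for each monomial $a^\nu$ appearing in $p_{12}$, the relation $p_{11}p_{12}=qp_{12}p_{11}$ gives $a^{\mu}a^\nu = q^{1-\langle \mu,\nu\rangle_{\text{something}}}a^\nu a^\mu$ up to the scalar, so comparing with the required global factor $q$ forces all monomials $a^\nu$ in $p_{12}$ to have the same ``$q$-commutation exponent'' against both $p_{11}$ and $p_{22}$. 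Using the explicit matrix $\mathcal Q=\left(\begin{smallmatrix}0&1&0\\-1&0&-1\\0&1&0\end{smallmatrix}\right)$ of exponents, this pins the value of $\nu_{11}+\nu_{22}$ (or rather the relevant linear functional of $\nu$) to a single value for every monomial in $p_{12}$. I expect this to cut down $p_{12}$ to lie in a single ``coset'' of the grading, i.e.\ $p_{12}=w\cdot\big(\sum_\nu c_\nu a_{12}^{e}\big)$ for a fixed monomial $w$ and fixed $a_{12}$-exponent $e$ — but I still need $e=1$ and the sum to collapse to one term.

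To finish, I would invoke bijectivity once more, now in the strong form: since $a_{12}=\vf^{-1}(p_{12})$ and $\vf^{-1}$ is an algebra map, any nontrivial factorization or ``length'' of $p_{12}$ would reflect back on $a_{12}$, which is an irreducible element of degree one. More concretely, I would argue that $p_{12}$ must be, up to a unit, equal to $a_{12}$ itself: the abelianization or the grading argument shows $\vf$ permutes the three homogeneous ``coordinate directions'' up to units, and since $p_{11},p_{22}$ are (scalar multiples of) monomials not equal to a unit times $a_{12}$ (they are units times monomials in $a_{11},a_{22}$, or in the $UT_q(2)$ case literally units), the remaining generator $a_{12}$ must map to a unit times $a_{12}$. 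Writing $p_{12}=u a_{12}$ with $u$ a unit of $A$ then concludes the proof.

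The main obstacle I anticipate is ruling out that $p_{12}$ is a genuine \emph{sum} of several monomials rather than a single monomial times a scalar — i.e.\ controlling the non-homogeneous part of $\vf(a_{12})$. The $q$-commutation relations with $p_{11}$ and $p_{22}$ should force homogeneity of $p_{12}$ in the relevant grading, but one must be careful that in the $UT_q(2)$ case the grading group is $\ZZ^3$ (so ``connectedness'' arguments need replacing by the unit-group structure and the fact that $p_{11},p_{22}$ are forced to be monomials in $a_{11},a_{22}$), and that the relation \cref{f_11f_22-is-f_22f_11} is consistent only when the monomial exponents line up correctly. Once homogeneity of $p_{12}$ is established, the passage from ``homogeneous'' to ``unit times $a_{12}$'' is comparatively soft, using that $a_{12}$ generates the unique graded direction not spanned by units and monomials in $a_{11},a_{22}$.
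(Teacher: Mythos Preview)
Your approach has a genuine gap at the very first step: the claim that $p_{11}=\vf(a_{11})$ and $p_{22}=\vf(a_{22})$ must be scalar multiples of single monomials is false for $T_q(2)$. Indeed, \cref{T:aut:Tq2} shows that any $\vf_{\lambda,A}$ with $A=\left(\begin{smallmatrix}\mu_{11}&\mu_{12}\\ \mu_{21}&\mu_{22}\end{smallmatrix}\right)\in GL_2(K)$ is an automorphism sending $a_{11}\mapsto \mu_{11}a_{11}+\mu_{21}a_{22}$, which is homogeneous for the total degree but \emph{not} for the full $\ZZ^3$-grading. So the reduction ``$p_{11},p_{22}$ are monomials, hence the commutation relations pin down the homogeneity type of $p_{12}$'' collapses. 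Even in the $UT_q(2)$ case, where $p_{11},p_{22}$ really are monomials in $a_{11},a_{22}$, your $q$-exponent computation only forces every monomial in $p_{12}$ to have the same $a_{12}$-exponent (and needs $q$ of infinite order to conclude even that, whereas the lemma only assumes $q\neq 1$); it does not rule out $p_{12}=f(a_{11},a_{22})\,a_{12}$ with $f$ a genuine Laurent polynomial rather than a unit. The vague appeal to ``irreducibility of $a_{12}$'' and ``bijectivity reflects factorizations back'' does not close this, since $a_{12}$ is normal and one needs an ideal-level argument, not an element-level one.

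The paper's proof avoids all of this by working ideal-theoretically. Set $I=(a_{12})$; then $A/I$ is a commutative domain of GK-dimension $2$, so the same holds for $A/\vf(I)$. In $A/\vf(I)$ the identity $(1-q)\,\ol{a_{12}}\,\ol{a_{11}}=[\ol{a_{12}},\ol{a_{11}}]=0$ forces $a_{12}\in\vf(I)$ or $a_{11}\in\vf(I)$, and similarly with $a_{22}$; a GK-dimension count (or the fact that $a_{11},a_{22}$ are units in $UT_q(2)$) then gives $a_{12}\in\vf(I)$, hence $I\subseteq\vf(I)$ and, applying this to $\vf^{-1}$, $I=\vf(I)$. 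Normality of $a_{12}$ finishes. This argument uses only $q\neq 1$ and never needs to know the shape of $p_{11},p_{22}$.
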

	\begin{proof}
		Consider $I=(a_{12})$ the ideal of $A$ generated by $a_{12}$. We have $T_q(2)/I\cong K[a_{11}, a_{22}]$ (respectively, $UT_q(2)/I\cong K[a^{\pm1}_{11}, a^{\pm1}_{22}]$), a commutative polynomial ring (respectively, Laurent polynomial ring) in two variables. In particular, $A/I$ is a commutative domain of Gelfand--Kirillov dimension $2$.

		The automorphism $\vf$ induces an isomorphism between $A/I$ and $A/\vf(I)$,
		hence $A/\vf(I)$ is also a commutative domain of Gelfand--Kirillov dimension $2$. In particular, in $A/\vf(I)$ we have
		\begin{align*}
			\overline 0= [\ol {a_{12}},\ol{a_{11}}]=\overline{[a_{12},a_{11}]}=(1-q)\overline{a_{12}a_{11}}=(1-q)\ol{a_{12}}\;\ol{a_{11}}.
		\end{align*}
		As $q\neq 1$ and $A/\vf(I)$ is a domain, we conclude that either $a_{12}\in\vf(I)$ or $a_{11}\in\vf(I)$. A similar computation shows that either $a_{12}\in\vf(I)$ or $a_{22}\in\vf(I)$. As $a_{11}$ and $a_{22}$ are units in $UT_q(2)$, it immediately follows that $a_{12}\in\vf(I)$ in case $A=UT_q(2)$. So assume that $A=T_q(2)$. If $a_{12}\notin\vf(I)$, we must have $a_{11}, a_{22}\in\vf(I)$. Hence $T_q(2)/\vf(I)$ is isomorphic to a quotient of $T_q(2)/(a_{11}, a_{22})\cong K[a_{12}]$. By \cite[Lemma 3.1]{Krause-Lenagan} this would imply that the Gelfand--Kirillov dimension of $T_q(2)/\vf(I)$ is at most $1$, which is a contradiction. Thus, $a_{12}\in\vf(I)$.
		
		We now have $I=(a_{12})\subseteq \vf(I)$. Since $\vf$ was arbitrary, the same holds for $\vf\m$, whence $I\sst\vf\m(I)$.
		Thus $(a_{12})= I=\vf(I)=(\vf(a_{12}))$. As $a_{12}$ is normal in $A$ (i.e., $a_{12}A=Aa_{12}$), we are done.
	\end{proof}
	
	We are now ready to describe $\Aut(T_q(2))$. The following notion will be helpful.
	An automorphism $\vf\in\Aut(T_q(2))$ is called \textit{linear} if 
	\begin{align*}
		\vf(\Span_K\{a_{11},a_{12},a_{22}\})=\Span_K\{a_{11},a_{12},a_{22}\}.
	\end{align*}

	\begin{theorem}\label{T:aut:Tq2}
		Suppose that $q$ is not a root of unity. Then any automorphism of $T_q(2)$ is linear. Moreover, 
		\begin{equation*}
			\Aut(T_q(2))\cong GL_1(K)\times GL_2(K)=K^*\times GL_2(K).
		\end{equation*}
	\end{theorem}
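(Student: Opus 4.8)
The plan is to determine the automorphism group of $T_q(2)$ explicitly, using \cref{L:a12-fixed-by-autos} as the starting point. Given $\vf\in\Aut(T_q(2))$, write $p_{ij}=\vf(a_{ij})$; by \cref{vf-determined-by-p_ij} these must satisfy \cref{f_22f_12-is-qf_12f_22,f_11f_12-is-qf_12f_11,f_11f_22-is-f_22f_11}. By \cref{L:a12-fixed-by-autos} (applicable since $q$ is not a root of unity, hence $q\ne 1$) there is a unit $u\in T_q(2)$ with $p_{12}=ua_{12}$; since the only units of $T_q(2)$ are nonzero scalars (it is a quantum affine space, hence a connected graded domain), we get $p_{12}=\af_{12}a_{12}$ for some $\af_{12}\in K^*$. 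So $\vf$ already fixes the line $Ka_{12}$.

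Next I would pin down $p_{11}$ and $p_{22}$. Working in the $\ZZ^3$-graded algebra $T_q(2)$ (graded by the exponents of $a_{11},a_{12},a_{22}$), expand $p_{11}$ and $p_{22}$ in the monomial basis. From relation \cref{f_11f_12-is-qf_12f_11}, i.e. $p_{11}(\af_{12}a_{12})=q(\af_{12}a_{12})p_{11}$, and the commutation rules of $T_q(2)$, one reads off which monomials can occur in $p_{11}$: a monomial $a_{11}^{r}a_{12}^{s}a_{22}^{t}$ satisfies $a_{12}$-commutation with the right $q$-factor only when $t-r=1$ after accounting for the sign, forcing $r=t$; similarly \cref{f_22f_12-is-qf_12f_22} constrains $p_{22}$. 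Combined with \cref{f_11f_22-is-f_22f_11} (which says $p_{11}$ and $p_{22}$ commute) and the fact that $\vf$ is bijective — so $\vf$ must induce an automorphism of the abelianization $T_q(2)/(a_{12})\cong K[a_{11},a_{22}]$, a polynomial ring in two variables — one forces $p_{11}$ and $p_{22}$ to be $K$-linear combinations of $a_{11}$ and $a_{22}$ with no constant term and no higher-degree terms. The induced automorphism of $K[a_{11},a_{22}]$ is then linear and invertible, i.e. an element of $GL_2(K)$ acting on $\Span_K\{a_{11},a_{22}\}$. Hence $\vf$ is linear: $\vf(\Span_K\{a_{11},a_{12},a_{22}\})=\Span_K\{a_{11},a_{12},a_{22}\}$.

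With linearity established, assembling the group is bookkeeping. The data of $\vf$ is: a scalar $\af_{12}\in K^*$ giving the action on $a_{12}$, and an invertible matrix $M=\left(\begin{smallmatrix}a&b\\c&d\end{smallmatrix}\right)\in GL_2(K)$ giving $\vf(a_{11})=aa_{11}+ba_{22}$, $\vf(a_{22})=ca_{11}+da_{22}$. I would check that \emph{every} such pair $(\af_{12},M)\in K^*\times GL_2(K)$ actually yields a valid endomorphism by verifying \cref{f_22f_12-is-qf_12f_22,f_11f_12-is-qf_12f_11,f_11f_22-is-f_22f_11}: relation \cref{f_11f_22-is-f_22f_11} holds because $a_{11},a_{22}$ commute so any linear combinations of them commute; relations \cref{f_11f_12-is-qf_12f_11,f_22f_12-is-qf_12f_22} hold because both $a_{11}$ and $a_{22}$ $q$-commute with $a_{12}$ in the same way, so the linear combinations do too. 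Composition of automorphisms corresponds to the product in $K^*\times GL_2(K)$: the $a_{12}$-component multiplies the scalars, the $\{a_{11},a_{22}\}$-component multiplies the matrices, and the two components do not interact. This gives a group isomorphism $\Aut(T_q(2))\cong K^*\times GL_2(K)=GL_1(K)\times GL_2(K)$.

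The main obstacle is the middle step: rigorously ruling out higher-degree or cross terms in $p_{11}$ and $p_{22}$. The cleanest argument is to observe that $I=(a_{12})$ is $\vf$-invariant (shown in the proof of \cref{L:a12-fixed-by-autos}), so $\vf$ descends to an automorphism $\ol\vf$ of $T_q(2)/I\cong K[a_{11},a_{22}]$; but also $I/I^2$ is a $\vf$-invariant $K[a_{11},a_{22}]$-module, free of rank one, and $\vf$ acts on $a_{12}\bmod I^2$ by the unit $\af_{12}$, which pins the $a_{12}$-degree-one part of $p_{11},p_{22}$ to be zero. For the polynomial automorphism $\ol\vf$ of $K[a_{11},a_{22}]$ one then needs that it is linear; this follows because $\vf$ must take the $\ZZ^3$-grading into a filtration it respects compatibly — more concretely, comparing total degrees and using that $\vf\m$ is also of this form forces $\ol\vf$ to have degree-one components only, since a polynomial automorphism increasing degree cannot have an inverse of the same shape. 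I expect this degree argument, together with carefully tracking the grading, to be where the real work lies; everything after is routine verification.
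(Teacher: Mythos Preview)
Your middle step has a genuine gap. After \cref{L:a12-fixed-by-autos}, relation \cref{f_11f_12-is-qf_12f_11} applied to a monomial $a_{11}^{r}a_{12}^{s}a_{22}^{t}$ forces $r+t=1$ (not ``$t-r=1$, forcing $r=t$''), with \emph{no} constraint on $s$; so at this stage you only obtain
\[
p_{11}=g_{10}(a_{12})\,a_{11}+g_{01}(a_{12})\,a_{22},\qquad
p_{22}=h_{10}(a_{12})\,a_{11}+h_{01}(a_{12})\,a_{22}
\]
for polynomials $g_{ij},h_{ij}\in K[y]$. The hard part is showing these four polynomials are constants, and the tools you propose do not accomplish this. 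The induced map $\ol\vf$ on $T_q(2)/(a_{12})\cong K[a_{11},a_{22}]$ sends $a_{11}\mapsto g_{10}(0)a_{11}+g_{01}(0)a_{22}$, so it is \emph{automatically} linear and carries no information about the higher coefficients of the $g_{ij}$; your $I/I^2$ argument likewise only sees constant terms. And while $\vf\circ\vf^{-1}=\id$ does give that the coefficient matrix lies in $GL_2(K[a_{12}])$, hence $g_{10}h_{01}-g_{01}h_{10}\in K^*$, this alone does not force constants (e.g.\ $g_{10}=1+y$, $g_{01}=y$, $h_{10}=h_{01}=1$ has determinant $1$).

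What the paper does at this point is to use \cref{f_11f_22-is-f_22f_11} substantively, not just to note that $p_{11},p_{22}$ commute: comparing left coefficients of $a_{11}^2$ and of $a_{22}^2$ in $p_{11}p_{22}=p_{22}p_{11}$ yields functional equations such as $g_{10}(y)h_{10}(qy)=h_{10}(y)g_{10}(qy)$, and a short lemma (using that $q$ is not a root of unity) shows this forces $g_{10}$ and $h_{10}$ to be proportional, and similarly $g_{01}\propto h_{01}$. Only after this proportionality does the condition $\det\in K^*$ force all four polynomials into $K$. You invoke \cref{f_11f_22-is-f_22f_11} but never extract this content; the abelianization and degree/filtration arguments you sketch cannot substitute for it.
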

	\begin{proof}
		Let $\lambda\in GL_1(K)=K^*$ and $A=(\mu_{ij})\in GL_2(K)$. The pair $(\lambda, A)\in GL_1(K)\times GL_2(K)$ corresponds to the assignment $p_{12}=\lambda a_{12}$, $p_{11}=\mu_{11}a_{11}+\mu_{21}a_{22}$ and $p_{22}=\mu_{12}a_{11}+\mu_{22}a_{22}$. It is immediate to check that the elements $p_{ij}$ satisfy \cref{f_22f_12-is-qf_12f_22,f_11f_12-is-qf_12f_11,f_11f_22-is-f_22f_11}, so they define $\vf_{\lambda, A}\in \End(T_q(2))$ with $\vf_{\lb, A}(a_{ij})=p_{ij}$. Clearly, $\vf_{\lambda, A}$ is invertible, with inverse $\vf_{\lambda^{-1}, A^{-1}}$. This proves that $GL_1(K)\times GL_2(K)$ embeds naturally in the group of linear automorphisms of $T_q(2)$.
		
		Conversely, let $\vf\in\Aut(T_q(2))$. Note that both $\rho$ and the diagonal automorphisms defined earlier are linear, so we can work modulo these automorphisms. 
		
		Since the group of units of $T_q(2)$ is reduced to scalars, we conclude from \cref{L:a12-fixed-by-autos} that $\vf(a_{12})=\lambda a_{12}$, for some $\lambda\in K^*$. Composing with an appropriate diagonal automorphism, we can suppose that $\lambda=1$, so that $\vf$ fixes $a_{12}$.
		
		Write $\vf(a_{11})=\sum_{k, l\geq 0}g_{kl}(a_{12})a_{11}^k a_{22}^l$, for some polynomials $g_{kl}(y)\in K[y]$. Using \cref{f_11f_12-is-qf_12f_11} we obtain
		\begin{align*}
			\sum_{k, l\geq 0}q^{k+l}a_{12}g_{kl}(a_{12})a_{11}^k a_{22}^l&=\left(\sum_{k, l\geq 0}g_{kl}(a_{12})a_{11}^k a_{22}^l\right) a_{12}= q a_{12}\left(\sum_{k, l\geq 0}g_{kl}(a_{12})a_{11}^k a_{22}^l\right)\\&=q\sum_{k, l\geq 0}a_{12}g_{kl}(a_{12})a_{11}^k a_{22}^l.
		\end{align*}
		As $q$ is not a root of unity, 
		we conclude that $g_{kl}(y)=0$, whenever $k+l\ne 1$. Thus, we can write $\vf(a_{11})=g_{10}(a_{12})a_{11} +g_{01}(a_{12}) a_{22}$. Similarly, $\vf(a_{22})=h_{10}(a_{12})a_{11} +h_{01}(a_{12}) a_{22}$, for some $h_{10}(y), h_{01}(y)\in K[y]$. 
		
		Next we use \cref{f_11f_22-is-f_22f_11}. Note that $a_{ii}p(a_{12})=p(qa_{12})a_{ii}$ for $i=1, 2$ and all $p(y)\in K[y]$. Thus, the (left) coefficient (in $K[a_{12}]$) of $a_{11}^2$ in the product $\vf(a_{11})\vf(a_{22})$ is $g_{10}(a_{12})h_{10}(qa_{12})$. Similarly, the coefficient of $a_{11}^2$ in the product $\vf(a_{22})\vf(a_{11})$ is $h_{10}(a_{12})g_{10}(qa_{12})$. Equating these terms we obtain
		\begin{align}\label{E:ghq-eq-hgq}
			g_{10}(a_{12})h_{10}(qa_{12})=h_{10}(a_{12})g_{10}(qa_{12}).
		\end{align}
		
		\textit{Claim:} Let $g, h\in K[y]$ satisfy
		\begin{align}\label{g(t)h(qt)=h(t)g(qt)}
			g(y)h(qy)=h(y)g(qy).    
		\end{align}
		Then one of $g$ or $h$ is a scalar multiple of the other.
		
		\textit{Proof of the Claim:} Without loss of generality, we can assume that $g, h\neq 0$ and that they are monic. Moreover, if $d\in K[y]$ is a common divisor of $g$ and $h$, then we can replace $g$ and $h$ by $g/d$ and $h/d$ in \cref{g(t)h(qt)=h(t)g(qt)}, so we can assume that $g$ and $h$ are coprime. Then, as $g(y)$ divides $h(y)g(qy)$ and is coprime to $h(y)$, $g(y)$ must divide $g(qy)$. For degree considerations, $g(y)=\mu g(qy)$, for some $\mu\in K^*$. But the fact that $q$ is not a root of unity implies that $g$ is a monomial in $y$. The same argument applies to $h$. By coprimeness, one of these polynomials is $1$ and the other $y^{m}$, for some $m\in\NN$. Then using  \cref{g(t)h(qt)=h(t)g(qt)} we conclude that $q^{m}=1$, so also $m=0$. This completes the proof of the claim.
		\medskip
		
		Applying the Claim to $g_{10}$ and $h_{10}$, by \cref{E:ghq-eq-hgq} we conclude that $g_{10}$ and $h_{10}$ are proportional. Similarly, comparing (left) coefficients of $a_{22}^2$ in \cref{f_11f_22-is-f_22f_11}, we conclude that $g_{01}$ and $h_{01}$ are also proportional. Up to composing with $\rho$, we can assume that $g_{10}\neq 0$. So $h_{10}=\lambda g_{10}$, for some $\lambda\in K$. Also, $g_{01}$ and $h_{01}$ cannot both be $0$ (otherwise $\vf(a_{22})=\lb\vf(a_{11})$), so assume $h_{01}\neq 0$ (the case $g_{01}\ne 0$ is similar), whence $g_{01}=\mu h_{01}$, for some $\mu\in K$.
		
		Since $\vf^{-1}$ fixes $a_{12}$ and acts on $a_{ii}$, $i=1,2$, similarly to $\vf$, it follows from $\vf(\vf^{-1}(a_{ii}))=a_{ii}$, $i=1,2$, that the matrix $A=\left(\begin{smallmatrix} g_{10}&h_{10}\\g_{01}&h_{01}\end{smallmatrix}\right)$ is invertible over $K[a_{12}]$. So, $\det(A)\in K^*$. By the previous paragraph, $\det(A)$ is a multiple of $g_{10}h_{01}$, which implies that $g_{10}, h_{01}\in K^*$ and thus $A\in GL_2(K)$ and $\vf=\vf_{\lb,A}$, as needed.
	\end{proof}
	
	\begin{remark}
		Observe that $\vf\in\Aut(T_q(2))$ is a bialgebra automorphism if and only if $\vf(a_{12})=\lb a_{12}$ and $\vf(a_{ii})=a_{ii}$, $i=1,2$. Thus, the group of bialgebra automorphisms of $T_q(2)$ is isomorphic to $K^*$.
	\end{remark}
	
	Next, we tackle the automorphism group of $UT_q(2)$. Although the group of units of $UT_q(2)$ is larger, there is a bit more rigidity in $\Aut(UT_q(2))$ in the sense that, modulo the subgroup $\langle\rho \rangle\cong\ZZ/2\ZZ$ generated by $\rho$, the automorphisms of $UT_q(2)$ are in a certain sense diagonal (see below).
	
	\begin{theorem}\label{Aut(UT_q(2))-cong-G-rtimes-<rho>}
		The following is a subgroup of $\Aut(UT_q(2))$:
		\begin{align*}
			\mathcal{G}=\{\vf\in\End(UT_q(2)) : \vf(a_{12})=\lambda_{12}a_{11}^ka_{22}^l a_{12},\ \vf(a_{ii})=\lambda_{ii}z^ja_{ii},\ i\in\{1, 2\}, \lambda_{ij}\in K^*, j, k, l\in\ZZ \},
		\end{align*}
		where $z=a_{11}a_{22}^{-1}$. Moreover, if $q$ is not a root of unity, then
		\begin{align*}
			\Aut(UT_q(2))=\mathcal{G}\rtimes\langle\rho\rangle.
		\end{align*}
	\end{theorem}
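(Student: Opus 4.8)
The plan is to first verify that $\mathcal G$ is indeed a subgroup of $\Aut(UT_q(2))$ and that $\rho$ normalizes it, so that $\mathcal G\rtimes\langle\rho\rangle$ makes sense inside $\Aut(UT_q(2))$; then to show, assuming $q$ is not a root of unity, that every automorphism lies in this semidirect product. For the first part, one checks directly via \cref{vf-determined-by-p_ij} that an assignment of the stated form satisfies \cref{f_22f_12-is-qf_12f_22,f_11f_12-is-qf_12f_11,f_11f_22-is-f_22f_11}: since $z=a_{11}a_{22}^{-1}$ is central by \cref{interm-centers}, the factors $z^j$ are harmless, and the monomials $a_{11}^k a_{22}^l$ commute with $a_{11}, a_{22}$ and $q$-commute with $a_{12}$ in a way that exactly reproduces the original relations. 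Invertibility is clear because $p_{11}, p_{22}$ are scalar multiples of monomials in the (invertible) elements $a_{11}, a_{22}$ and $p_{12}$ is a unit times $a_{12}$; composing two such maps again has the same shape, so $\mathcal G$ is a group. That $\rho$ normalizes $\mathcal G$ is immediate since $\rho$ swaps $a_{11}\leftrightarrow a_{22}$ (hence $z\mapsto z^{-1}$) and fixes $a_{12}$, carrying an element of the displayed form to another one; and $\rho\notin\mathcal G$ because elements of $\mathcal G$ send $a_{11}$ to a scalar times $z^j a_{11}$, never to a multiple of $a_{22}$. So $\langle\rho\rangle\cap\mathcal G=1$ and we get the internal semidirect product $\mathcal G\rtimes\langle\rho\rangle\leq\Aut(UT_q(2))$.

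For the reverse inclusion, let $\vf\in\Aut(UT_q(2))$. By \cref{L:a12-fixed-by-autos} there is a unit $u\in UT_q(2)$ with $\vf(a_{12})=ua_{12}$; since the units of $UT_q(2)=\mathcal P_q[z^{\pm1}]$ are exactly the scalar multiples of monomials in $a_{11}^{\pm1}, a_{22}^{\pm1}$ (one reads this off the Laurent-polynomial / iterated-Ore-extension structure via \cref{R:UTqn-as-localized-Tqn}: the only units in $K_q[a_{11}^{\pm1}, a_{22}^{\pm1}, a_{12}]$ localized appropriately are of this form, as $a_{12}$ is a nonunit normal element), we get $\vf(a_{12})=\lambda_{12} a_{11}^k a_{22}^l a_{12}$ for some $\lambda_{12}\in K^*$, $k,l\in\ZZ$. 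This handles the $a_{12}$-component. It remains to pin down $\vf(a_{11})$ and $\vf(a_{22})$, and here I would mimic the proof of \cref{T:aut:Tq2}: write $\vf(a_{11})$ in the PBW basis $\{a_{11}^r a_{12}^s a_{22}^t : r,t\in\ZZ,\ s\in\NN\}$ of $UT_q(2)$, impose \cref{f_11f_12-is-qf_12f_11}, and use that $q$ is not a root of unity to force the $a_{12}$-degree $s$ to be $0$ and the relation $r+t=1$ on the surviving exponents — so $\vf(a_{11})$ is a $K$-linear combination of $a_{11}^{r}a_{22}^{1-r}$, i.e.\ of $z^{r-1}a_{22}$ ... but more convenient is to observe $a_{11}^r a_{22}^{1-r} = z^{r}a_{22}$ up to scalars, giving $\vf(a_{11})\in \sum_r K\, z^r a_{22}$, and likewise $\vf(a_{22})\in\sum_r K\, z^r a_{22}$ (after noting $a_{11}=za_{22}$). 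Then \cref{f_11f_22-is-f_22f_11} is automatic since $z$ is central, so I need another input to cut down these Laurent-polynomial coefficients to single monomials.

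The main obstacle, and the step requiring the most care, is exactly this last reduction: showing that $\vf(a_{11})=\lambda_{11}z^{j_1}a_{11}$ and $\vf(a_{22})=\lambda_{22}z^{j_2}a_{22}$ with $a_{11}$ and $a_{22}$ genuinely separated (possibly after composing with $\rho$), rather than each being a Laurent polynomial in $z$ times $a_{22}$. The idea is to use invertibility of $\vf$ together with the relation $\vf(a_{12})=\lambda_{12}a_{11}^k a_{22}^l a_{12}$: writing $\vf(a_{11})=f(z)a_{22}$ and $\vf(a_{22})=g(z)a_{22}$ with $f,g\in K[z^{\pm1}]$, the map $\vf$ induces an automorphism of the central subalgebra $Z(UT_q(2))=K[z^{\pm1}]$ (by \cref{interm-centers}), and $\vf(z)=\vf(a_{11})\vf(a_{22})^{-1}=f(z)g(z)^{-1}$; since the automorphisms of $K[z^{\pm1}]$ are $z\mapsto\mu z^{\pm1}$, we get $f(z)g(z)^{-1}=\mu z^{\varepsilon}$ with $\varepsilon=\pm1$. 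Composing with $\rho$ if $\varepsilon=-1$, we may assume $f(z)=\mu z\, g(z)$. Finally, applying $\vf$ to the identity $a_{12}a_{11}=q^{-1}a_{11}a_{12}$ (equivalently, comparing how $\vf(a_{11})$ and $\vf(a_{22})$ must each individually $q$-commute with $\vf(a_{12})$ to be consistent with the original relations — a point-of-$T_q(2)$ check using \cref{vf-determined-by-p_ij} and that $a_{11}^ka_{22}^la_{12}$ $q$-commutes with $a_{22}$) forces $g(z)$ to be a single monomial: a Laurent polynomial $g$ that $q$-commutes (for $q$ not a root of unity) with $a_{12}$ after multiplication by the fixed monomial $a_{11}^ka_{22}^l$ in the required way must be homogeneous in $z$. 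This yields $g(z)=\lambda_{22}z^{j_2}$, hence $\vf(a_{22})=\lambda_{22}z^{j_2}a_{22}$ and $\vf(a_{11})=\mu\lambda_{22}z^{j_2+1}a_{22}=\lambda_{11}z^{j_1}a_{11}$, so $\vf\in\mathcal G$. Combined with the first paragraph, every automorphism lies in $\mathcal G\rtimes\langle\rho\rangle$, proving equality.
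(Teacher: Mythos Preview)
Your plan for the first half (that $\mathcal G$ is a subgroup normalized by $\rho$, with $\mathcal G\cap\langle\rho\rangle$ trivial) matches the paper's argument and is fine. The gap is in the second half, at the very step you flag as ``the main obstacle.''

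You write $\vf(a_{11})=f(z)a_{22}$ and $\vf(a_{22})=g(z)a_{22}$ with $f,g\in K[z^{\pm1}]$, correctly use the induced automorphism of $Z(UT_q(2))=K[z^{\pm1}]$ to get $f=\mu z^{\pm1}g$, and then claim that imposing $\vf(a_{22})\vf(a_{12})=q\vf(a_{12})\vf(a_{22})$ forces $g(z)$ to be a single monomial. This last step fails: since $z$ is central, $g(z)$ commutes with $\vf(a_{12})$, and the relation reduces to $a_{22}\cdot(\lambda_{12}a_{11}^ka_{22}^la_{12})=q(\lambda_{12}a_{11}^ka_{22}^la_{12})\cdot a_{22}$, which holds identically and puts no constraint on $g$. (You had already extracted all the content of \cref{f_11f_12-is-qf_12f_11} and \cref{f_22f_12-is-qf_12f_22} when you reduced to $s=0$ and $r+t=1$; there is nothing left to squeeze from these relations.)

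The fix is much simpler than your detour, and it is exactly what the paper does: $a_{11}$ and $a_{22}$ are \emph{units} in $UT_q(2)$, so $\vf(a_{11})$ and $\vf(a_{22})$ are units. You already used, when treating $\vf(a_{12})$, that the unit group of $UT_q(2)$ is $\{\lambda a_{11}^ra_{22}^t:\lambda\in K^*,\ r,t\in\ZZ\}$; apply the same observation to $a_{11}$ and $a_{22}$. This gives $\vf(a_{ii})=\lambda_{ii}a_{11}^{r_i}a_{22}^{t_i}$ outright, with no PBW expansion needed. Then \cref{f_11f_12-is-qf_12f_11} and \cref{f_22f_12-is-qf_12f_22} force $r_i+t_i=1$, so $\vf(a_{ii})=\lambda_{ii}z^{j_i}a_{ii}$ (after rewriting). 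Your center argument then shows $\vf(z)=(\lambda_{11}/\lambda_{22})z^{j_1-j_2+1}$, whence $j_1-j_2+1=\pm1$; composing with $\rho$ if necessary yields $j_1=j_2$, and $\vf\in\mathcal G$.
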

	\begin{proof}
		Using \cref{f_22f_12-is-qf_12f_22,f_11f_12-is-qf_12f_11,f_11f_22-is-f_22f_11}, it is routine to check that $\mathcal{G}\subseteq \End(UT_q(2))$ and that $\mathcal{G}$ is a submonoid of $\End(UT_q(2))$. Moreover, it is not hard to check that in fact $\mathcal{G}\subseteq \Aut(UT_q(2))$ is a subgroup. This can also be done explicitly as follows. Identify $\mathcal{G}$ as a set with $(K^*)^3\times\ZZ^3$ (note that these are not isomorphic as groups, because $\mathcal{G}$ is nonabelian), where $\vf\in \mathcal{G}$ with $\vf(a_{12})=\lambda_{12}a_{11}^ka_{22}^l a_{12}$ and $\vf(a_{ii})=\lambda_{ii}z^ja_{ii}$, for $i=1, 2$, is identified with $(\lambda_{12}, \lambda_{11}, \lambda_{22}, j, k, l)\in (K^*)^3\times\ZZ^3$. Then, by transport of structure, the operation induced on $(K^*)^3\times\ZZ^3$ via composition in $\mathcal{G}$ is given by
		\begin{align}
			&(\lambda_{12}, \lambda_{11}, \lambda_{22}, j_1, k_1, l_1)\ast (\mu_{12}, \mu_{11}, \mu_{22}, j_2, k_2, l_2)\notag\\ 
			&=\left(\lambda_{12}\mu_{12}\lambda_{11}^{k_2}\lambda_{22}^{l_2}, \lambda_{11}\mu_{11}\left(\frac{\lambda_{11}}{\lambda_{22}}\right)^{j_2}, \lambda_{22}\mu_{22}\left(\frac{\lambda_{11}}{\lambda_{22}}\right)^{j_2}, j_1+j_2, k_1+k_2+j_1(k_2+l_2), l_1+l_2-j_1(k_2+l_2)\right).\label{product-of-sextuples} 
		\end{align}
		Thus the identity morphism corresponds to $(1, 1, 1, 0, 0, 0)$ and $(\lambda_{12}, \lambda_{11}, \lambda_{22}, j, k, l)^{-1}$ is
		\begin{align*}
			\left(\lambda_{12}^{-1}\lambda_{11}^{k-j(k+l)}\lambda_{22}^{l+j(k+l)}, \lambda_{11}^{-1}\left(\frac{\lambda_{11}}{\lambda_{22}}\right)^{j}, \lambda_{22}^{-1}\left(\frac{\lambda_{11}}{\lambda_{22}}\right)^{j}, -j, j(k+l)-k, -j(k+l)-l\right). 
		\end{align*}
		Moreover, if $\vf \in \mathcal{G}$ is represented by $(\lambda_{12}, \lambda_{11}, \lambda_{22}, j, k, l)$, then $\rho\circ\vf\circ\rho$ is represented by $(\lambda_{12}, \lambda_{22}, \lambda_{11}, -j, l, k)$, so $\rho$ normalizes $\mathcal{G}$ and clearly $\mathcal{G}\cap\langle\rho\rangle=\id_{UT_q(2)}$.
		
		Assume now that $q$ is not a root of unity. It remains to show that $\Aut(UT_q(2))$ is generated by $\mathcal{G}$ and $\rho$. Note that $\mathcal{G}$ contains the diagonal automorphisms of $UT_q(2)$, so we can work modulo $\rho$ and the diagonal automorphisms. Let $\vf\in\Aut(UT_q(2))$. Since the group of units of $UT_q(2)$ is $\{\lambda a_{11}^ka_{22}^l : \lambda\in K^*, k, l\in\ZZ \}$, we conclude from \cref{L:a12-fixed-by-autos} that, modulo a diagonal automorphism, $\vf(a_{12})= a_{11}^ka_{22}^l a_{12}$, for some $k, l\in\ZZ$. 
		
		Since $a_{11}$ is a unit, there exist $i, j\in\ZZ$ and $\lambda\in K^*$ such that $\vf(a_{11})=\lambda a_{11}^i a_{22}^{j}$; as before, we can assume that $\lambda=1$. Using the relation \cref{f_11f_12-is-qf_12f_11} and the fact that $q$ is not a root of unity, we conclude, as in the proof of \cref{T:aut:Tq2}, that $i+j=1$, so 
		$\vf(a_{11})= a_{11}^i a_{22}^{1-i}=a_{11}z^{i-1}$. Similarly, we have $\vf(a_{22})=a_{11}^j a_{22}^{1-j}=a_{22}z^{j}$, for some $j\in\ZZ$. The automorphism $\vf$ must restrict to an automorphism of $Z(UT_q(2))$ and $Z(UT_q(2))=K[z^{\pm 1}]$, by \cref{interm-centers}. We have
		\begin{align*}
			\vf(z)=\vf(a_{11}a_{22}^{-1}) = a_{11}z^{i-1}a_{22}^{-1}z^{-j}=z^{i-j},
		\end{align*}
		whence $i-j=\pm 1$. Interchanging $\vf$ with $\vf\circ\rho$, if necessary, we can assume that $i-j= 1$. Thus $\vf(a_{11})=a_{11}z^{j}$ and $\vf(a_{22})=a_{22}z^{j}$, so $\vf\in \mathcal{G}$.
	\end{proof}

	\begin{remark}
		Let $\mathcal{G}$ be the subgroup of $\Aut(UT_q(2))$ from \cref{Aut(UT_q(2))-cong-G-rtimes-<rho>}. Then each $(\lb_{12},\lb_{11},\lb_{22},j,k,l)\in \mathcal{G}$ can be uniquely written as
		\begin{align}
			(\lb_{12},\lb_{11},\lb_{22},j,k,l)&=(1,1,1,j,k,l)*(\lb_{12},\lb_{11},\lb_{22},0,0,0)\notag\\
			&=(1,1,1,0,k,l)*(1,1,1,j,0,0)*(\lb_{12},\lb_{11},\lb_{22},0,0,0)\notag\\
			&=(1,1,1,j,0,0)*(1,1,1,0,k-j(k+l),l+j(k+l))*(\lb_{12},\lb_{11},\lb_{22},0,0,0).\label{sextuple-decompositions}
		\end{align}
		More precisely, $\mathcal{G}$ is the Zappa–Sz\'ep product $(G_1\rtimes G_2)G_3$ of its subgroups, where
		\begin{align*}
			G_1&=\{(\lb_{12},\lb_{11},\lb_{22},j,k,l)\in \mathcal{G}:\lb_{12}=\lb_{11}=\lb_{22}=1\text{ and }j=0\}\cong \ZZ\times\ZZ,\\
			G_2&=\{(\lb_{12},\lb_{11},\lb_{22},j,k,l)\in \mathcal{G}:\lb_{12}=\lb_{11}=\lb_{22}=1\text{ and }k=l=0\}\cong \ZZ,\\
			G_3&=\{(\lb_{12},\lb_{11},\lb_{22},j,k,l)\in \mathcal{G}:j=k=l=0\}\cong(K^*)^3.
		\end{align*}
		
		Indeed, all the decompositions of \cref{sextuple-decompositions} are straightforward by \cref{product-of-sextuples}, and their uniqueness is obvious by \cref{sextuple-decompositions} itself. It is directly verified that $G_3$ is a subgroup of $\mathcal{G}$ isomorphic to $(K^*)^3$ and that $\mathcal{G}=HG_3$, where 
		\begin{align*}
			H=\{(\lb_{12},\lb_{11},\lb_{22},j,k,l)\in \mathcal{G}:\lb_{12}=\lb_{11}=\lb_{22}=1\}  
		\end{align*}
		is a subgroup of $\mathcal{G}$ with $G_3\cap H=\{(1,1,1,0,0,0)\}$. Now, by the second and third equalities of \cref{sextuple-decompositions} we have $H=G_1G_2=G_2G_1$ and $G_1\trianglelefteq H$. Since, obviously, $G_1\cap G_2=\{(1,1,1,0,0,0)\}$, we conclude that $H=G_1\rtimes G_2$, whence $\mathcal{G}=(G_1\rtimes G_2)G_3$. Finally, the isomorphisms $G_1\cong \ZZ\times\ZZ$ and $G_2\cong\ZZ$ are evident by \cref{product-of-sextuples}.
	\end{remark}

	\begin{corollary}\label{Hopf-aut-UT_q(2)}
		Assume that $q$ is not a root of unity and let $z=a_{11}a_{22}^{-1}$. Then the group of Hopf algebra automorphisms of $UT_q(2)$ is
		\begin{align*}
			\mathcal{H}=\{\vf\in\Aut(UT_q(2)) : \vf(a_{12})=\lb z^ja_{12},\ \vf(a_{ii})=z^ja_{ii},\ i\in\{1, 2\}, \lb\in K^*, j\in\ZZ \}\cong K^*\times \ZZ.
		\end{align*}
	\end{corollary}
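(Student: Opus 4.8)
The strategy is to start from the description of the full automorphism group in \cref{Aut(UT_q(2))-cong-G-rtimes-<rho>}, namely $\Aut(UT_q(2))=\mathcal{G}\rtimes\langle\rho\rangle$, and to cut down to those automorphisms that additionally respect the coalgebra structure and the antipode. First I would observe that $\rho$ is not a bialgebra automorphism (it is a coalgebra \emph{anti}automorphism by \cref{autos}\cref{rho-antiauto-of-coalgebras} and, e.g., $(\rho\ot\rho)(\Dl(a_{12}))=a_{22}\ot a_{12}+a_{12}\ot a_{11}\neq \Dl(a_{12})$ since $q\neq 1$), so every Hopf automorphism already lies in $\mathcal{G}$. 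Thus the task reduces to determining which elements $\vf\in\mathcal{G}$, written as sextuples $(\lb_{12},\lb_{11},\lb_{22},j,k,l)$, are comultiplicative and counital; compatibility with $S$ will then come for free, since $S^2=\id$ by \cref{S^2-is-id} and $S$ is an anti-automorphism, so once $\vf$ is a bialgebra automorphism it automatically commutes with the antipode.

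The core computation is to impose $(\vf\ot\vf)\circ\Dl=\Dl\circ\vf$ on the generators $a_{11},a_{12},a_{22},t$. Since the $a_{ii}$ are group-like, $\vf(a_{ii})=\lb_{ii}z^ja_{ii}$ must again be group-like; as $z^ja_{ii}$ is a monomial in the group-likes $a_{11},a_{22}$ and hence group-like, the condition $\Dl(\vf(a_{ii}))=\vf(a_{ii})\ot\vf(a_{ii})$ forces only $\ve(\vf(a_{ii}))=1$, i.e. $\lb_{ii}=1$ for $i=1,2$. Next, applying $\Dl$ to $\vf(a_{12})=\lb_{12}a_{11}^ka_{22}^la_{12}=\lb_{12}z^ka_{22}^{k+l}a_{12}$ and using $\Dl(a_{12})=a_{11}\ot a_{12}+a_{12}\ot a_{22}$ together with the group-likeness of $a_{11}^ka_{22}^l$, one gets
\[
(\vf\ot\vf)(\Dl(a_{12}))=\lb_{12}\,a_{11}^ka_{22}^l\bigl(\vf(a_{11})\ot\vf(a_{12})+\vf(a_{12})\ot\vf(a_{22})\bigr),
\]
and one must match this against $\Dl(\vf(a_{12}))=\lb_{12}(a_{11}^ka_{22}^l\ot a_{11}^ka_{22}^l)\Dl(a_{12})$. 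Since $\lb_{11}=\lb_{22}=1$, $\vf(a_{11})=z^ja_{11}$ and $\vf(a_{22})=z^ja_{22}$, and this reduces to the requirement that the extra factors $z^j$ in each tensor slot cancel correctly; carrying the $z^j=a_{11}^ja_{22}^{-j}$ through shows this holds automatically once $k,l$ are free but forces no new constraint beyond $k=l$. Indeed, writing $\vf(a_{12})=\lb z^j a_{12}$ with $k=-l=j$ as in the statement, one checks $\Dl(z^ja_{12})=(z^j\ot z^j)(a_{11}\ot a_{12}+a_{12}\ot a_{22})=z^ja_{11}\ot z^ja_{12}+z^ja_{12}\ot z^ja_{22}=\vf(a_{11})\ot\vf(a_{12})+\vf(a_{12})\ot\vf(a_{22})$ up to the scalar $\lb$, so comultiplicativity holds; and $\ve(\vf(a_{12}))=\lb\ve(z^ja_{12})=0=\ve(a_{12})$. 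The constraint $k=l$ (more precisely $\vf(a_{11})/a_{11}=\vf(a_{22})/a_{22}=z^j$, equivalently the sextuple has $\lb_{11}=\lb_{22}=1$ and $k$, $l$ tied to $j$ as in $z^j$) is exactly what is needed, and I expect it to emerge from comparing coefficients; the reader can also just verify directly that the displayed maps are bijective bialgebra endomorphisms, with inverse given by $(\lb,j)\mapsto(\lb^{-1}z^{-j}\text{-twist},-j)$.

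Finally I would identify the group structure. An element of $\mathcal{H}$ is determined by the pair $(\lb,j)\in K^*\times\ZZ$ via $\vf(a_{12})=\lb z^ja_{12}$, $\vf(a_{ii})=z^ja_{ii}$. Using \cref{product-of-sextuples} (or a direct computation: $z$ is central, so $z^j$ commutes with everything and the $z$-powers simply add), composing $(\lb_1,j_1)$ with $(\lb_2,j_2)$ gives $\vf_1\circ\vf_2(a_{12})=\vf_1(\lb_2 z^{j_2}a_{12})=\lb_2 z^{j_2}\lb_1 z^{j_1}a_{12}=\lb_1\lb_2 z^{j_1+j_2}a_{12}$ and likewise on the $a_{ii}$, so the multiplication is componentwise: $\mathcal{H}\cong K^*\times\ZZ$ as groups. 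The main obstacle is purely bookkeeping in the second paragraph — keeping track of the central factors $z^j$ in both tensor legs when verifying comultiplicativity of $\vf(a_{12})$ — but since $z$ is central and group-like, this is routine rather than delicate; the only genuine content is ruling out $\rho$ and pinning down that the $a_{ii}$ must be scaled by the \emph{same} central group-like $z^j$, which follows from matching the cross terms of $\Dl(a_{12})$.
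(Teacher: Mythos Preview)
Your overall strategy matches the paper's: start from $\Aut(UT_q(2))=\mathcal{G}\rtimes\langle\rho\rangle$ and impose comultiplicativity and counitality on generators. However, there is a genuine logical gap in your first step.

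You write: ``$\rho$ is not a bialgebra automorphism \ldots\ so every Hopf automorphism already lies in $\mathcal{G}$.'' This inference is not valid. From $\rho\notin\mathcal{H}$ and $[\Aut(UT_q(2)):\mathcal{G}]=2$ you cannot deduce $\mathcal{H}\subseteq\mathcal{G}$: some $g\rho$ with $g\in\mathcal{G}\setminus\mathcal{H}$ could still lie in $\mathcal{H}$. The paper deals with this by treating the coset $\mathcal{G}\rho$ explicitly (its Case~2): for $\vf\in\mathcal{G}\rho$ with $\lb_{11}=\lb_{22}=1$ one computes $(\vf\ot\vf)(\Dl(a_{12}))$ and $\Dl(\vf(a_{12}))$ and obtains the simultaneous requirements $z^ja_{22}=a_{11}^{k+1}a_{22}^l$ and $z^ja_{11}=a_{11}^ka_{22}^{l+1}$, which are incompatible. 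You need either this computation or an alternative argument (for instance: since $\rho$ is a coalgebra \emph{anti}automorphism, $g\rho$ being a coalgebra automorphism would force $g\in\mathcal{G}$ to be a coalgebra antiautomorphism, and one then checks that no $g\in\mathcal{G}$ has this property). Merely excluding $\rho$ itself is not enough.

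There is also some muddling in your Case~1 computation. Your displayed formula $(\vf\ot\vf)(\Dl(a_{12}))=\lb_{12}\,a_{11}^ka_{22}^l\bigl(\vf(a_{11})\ot\vf(a_{12})+\vf(a_{12})\ot\vf(a_{22})\bigr)$ is incorrect (and does not type-check); the left side is simply $\vf(a_{11})\ot\vf(a_{12})+\vf(a_{12})\ot\vf(a_{22})$. Comparing this with $\Dl(\vf(a_{12}))=\lb_{12}(a_{11}^ka_{22}^l\ot a_{11}^ka_{22}^l)\Dl(a_{12})$ yields the condition $z^j=a_{11}^ka_{22}^l$, i.e.\ $k=j$ and $l=-j$, which is the paper's conclusion. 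You state the constraint as ``$k=l$'' in one sentence and ``$k=-l=j$'' two lines later; only the second is correct. Once these are fixed, the remainder (checking $\ve$, the antipode compatibility via $S^2=\id$, and the group isomorphism $\mathcal{H}\cong K^*\times\ZZ$) is fine and in line with the paper.
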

	\begin{proof}
		Let $\vf\in\Aut(UT_q(2))$. By \cref{Aut(UT_q(2))-cong-G-rtimes-<rho>} there are two cases.
		
		\textit{Case 1:} $\vf\in \mathcal{G}$. Then $\vf(a_{12})=\lb_{12}a_{11}^ka_{22}^l a_{12}$, $\vf(a_{ii})=\lb_{ii}z^ja_{ii}$, where $i\in\{1, 2\}$, $\lb_{ij}\in K^*$ and $j, k, l\in\ZZ$. For all $i\in\{1,2\}$ we have
		\begin{align*}
			(\vf\ot\vf)(\Dl(a_{ii}))&=\vf(a_{ii})\ot\vf(a_{ii})=\lb_{ii}z^ja_{ii}\ot \lb_{ii}z^ja_{ii}=\lb_{ii}^2z^ja_{ii}\ot z^ja_{ii},\\
			\Dl(\vf(a_{ii}))&=\Dl(\lb_{ii}z^ja_{ii})=\lb_{ii}z^ja_{ii}\ot z^ja_{ii},
		\end{align*}
		so that $(\vf\ot\vf)(\Dl(a_{ii}))=\Dl(\vf(a_{ii}))\iff\lb_{ii}=1$. Hence, assume $\lb_{11}=\lb_{22}=1$. Then
		\begin{align}
			(\vf\ot\vf)(\Dl(a_{12}))&=\vf(a_{11})\ot\vf(a_{12})+\vf(a_{12})\ot\vf(a_{22})=z^ja_{11}\ot\lb_{12}a_{11}^ka_{22}^la_{12}+\lb_{12}a_{11}^ka_{22}^l a_{12}\ot z^ja_{22},\notag\\
			\Dl(\vf(a_{12}))&=\Dl(\lb_{12}a_{11}^ka_{22}^la_{12})=\lb_{12}(a_{11}^ka_{22}^l\ot a_{11}^ka_{22}^l)(a_{11}\ot a_{12}+a_{12}\ot a_{22}),\label{Dl(vf(a_12))}
		\end{align}
		so that $(\vf\ot\vf)(\Dl(a_{12}))=\Dl(\vf(a_{12}))\iff z^j=a_{11}^ka_{22}^l$, in which case $\vf(a_{12})=\lb_{12}z^ja_{12}$. Thus, $\vf$ respects $\Dl$ $\iff\vf\in \mathcal{H}$. Since $\ve\circ\vf=\ve$ for any $\vf\in\mathcal{H}$, we are done.
		
		\textit{Case 2:} $\vf\in \mathcal{G}\rho$. Then $\vf(a_{12})=\lb_{12}a_{11}^ka_{22}^l a_{12}$, $\vf(a_{ii})=\lb_{ii}z^ja_{\ol i\,\ol i}$, where $i\in\{1, 2\}$, $\ol i=3-i$, $\lb_{ij}\in K^*$ and $j, k, l\in\ZZ$. As in Case 1 one has $(\vf\ot\vf)(\Dl(a_{ii}))=\Dl(\vf(a_{ii}))\iff\lb_{ii}=1$ for all $i\in\{1,2\}$. Assuming $\lb_{11}=\lb_{22}=1$, we have
		\begin{align*}
			(\vf\ot\vf)(\Dl(a_{12}))&=\vf(a_{11})\ot\vf(a_{12})+\vf(a_{12})\ot\vf(a_{22})=z^ja_{22}\ot\lb_{12}a_{11}^ka_{22}^la_{12}+\lb_{12}a_{11}^ka_{22}^l a_{12}\ot z^ja_{11},
		\end{align*}
		while $\Dl(\vf(a_{12}))$ is given by \cref{Dl(vf(a_12))} as in Case 1. Hence $(\vf\ot\vf)(\Dl(a_{12}))=\Dl(\vf(a_{12}))\iff z^ja_{22}=a_{11}^{k+1}a_{22}^l$ and $z^ja_{11}=a_{11}^ka_{22}^{l+1}$, which is impossible. Thus, $\vf$ is not a bialgebra morphism.
		
		The isomorphism $\mathcal{H}\cong K^*\times \ZZ$ is obvious.
	\end{proof}
	
	\begin{corollary}
		Assume that $q\in K_0$ is not a root of unity. Then the group of Hopf $*$-algebra automorphisms of $UT_q(2)$ is
		\begin{align*}
			\mathcal{K}=\{\vf\in\Aut(UT_q(2)) : \vf(a_{12})=\lb a_{12},\ \vf(a_{ll})=a_{ll},\ l\in\{1, 2\}, \lb\in K_0^*\}\cong K_0^*.
		\end{align*}
	\end{corollary}
	\begin{proof}
		Observe from \cref{Hopf-*-str-on-UT_q(n)} that
		\begin{align*}
			a_{12}^*=-qa\m_{11}a\m_{22}a_{12},\quad a_{11}^*=a\m_{22}\quad\text{and}\quad a_{22}^*=a\m_{11}.
		\end{align*}
		Let $\vf\in\Aut(UT_q(2))$ be a Hopf $*$-algebra automorphism of $UT_q(2)$. By \cref{Hopf-aut-UT_q(2)} there are $\lb\in K^*$ and $j\in\ZZ$ such that $\vf(a_{12})=\lb z^ja_{12}$ and $\vf(a_{ll})=z^ja_{ll}$ for all $l\in\{1, 2\}$, where $z=a_{11}a_{22}^{-1}$. Since $z^*=(a_{11}a_{22}^{-1})^*=(a_{22}^*)\m a_{11}^*=a_{11}a\m_{22}=z$, we have $\vf(a_{11})^*=(z^ja_{11})^*=a_{11}^*z^j=a\m_{22}z^j$, while $\vf(a_{11}^*)=\vf(a\m_{22})=(z^ja_{22})\m=a\m_{22}z^{-j}$. It follows that $j=0$, so that $\vf(a_{ll})=a_{ll}$ for $l\in\{1, 2\}$. Now, $\vf(a_{12})^*=(\lb a_{12})^*=\ol\lb a_{12}^*=-\ol\lb qa\m_{11}a\m_{22}a_{12}$ and $\vf(a_{12}^*)=\vf(-qa\m_{11}a\m_{22}a_{12})=-\lb qa\m_{11}a\m_{22}a_{12}$, yielding $\ol\lb=\lb$, i.e. $\lb\in K_0$. Thus, $\vf\in\mathcal{K}$. Conversely, if $\vf\in\mathcal{K}$, then it is easy to see that $\vf$ is a Hopf $*$-algebra automorphism of $UT_q(2)$.
	\end{proof}
	\section*{Acknowledgements}
	The authors thank Uli Kr\"ahmer and Lucas Buzaglo for discussions and comments on a previous version of the manuscript, which helped improve it.
	The second and third authors were partially supported by CMUP -- Centro de Matem\'atica da Universidade do Porto, member of LASI, which is financed by national funds through FCT -- Funda\c c\~ao para a Ci\^encia e a Tecnologia, I.P., under the project with reference UID/00144/2025, doi: \url{https://doi.org/10.54499/UID/00144/2025}. 
	\bibliography{bibl}{}
	\bibliographystyle{acm}
	
\end{document}